\theoremstyle{plain}
\newtheorem{theo}{Theorem}
\newtheorem{lemma}{Lemma}[section]
\newtheorem{proposition}[lemma]{Proposition}
\newtheorem{corollary}[lemma]{Corollary}
\theoremstyle{definition}
\newtheorem{definition}[lemma]{Definition}
\newtheorem{remark}[lemma]{Remark}
\newtheorem{example}[lemma]{Example}
\newcounter{primer}[section]
\newcommand{\prr}{\par\smallskip\refstepcounter{primer}\xdef\@currentlabel{\thesection.\arabic{primer}}%
\textbf{Counterexample \arabic{section}.\arabic{primer}.} } 
\newcounter{sluchaj}[section]
\newcommand{\slu}{\par\smallskip\refstepcounter{sluchaj}\xdef\@currentlabel{\thesection.\arabic{sluchaj}}%
\textbf{Case \arabic{section}.\arabic{sluchaj}.} } 
\newcounter{line}
\def\kk{{\Bbbk}}
\def\ZZ{{\mathbb Z}}
\def\QQ{{\mathbb Q}}
\def\Qg0{{\mathbb Q_{\geqslant 0}}}
\def\Lie{{\rm Lie}}
\def\Spin{{\rm Spin}}
\def\conv{{\rm conv}}
\def\vol{{\rm vol}}
\def\qmatrix#1{\left(\begin{array}{*{20}r}#1\end{array}\right)}
\def\kratno{\mathbin{\lower.3ex\hbox{$\m@th\vdots$}}}
\newcounter{itemnumber}
\begin{document}
\title[SIMPLE MODULES WITH NORMAL CLOSURES OF MAXIMAL TORUS ORBITS]{SIMPLE MODULES OF CLASSICAL LINEAR GROUPS \\ WITH NORMAL CLOSURES OF MAXIMAL TORUS ORBITS}

\author%
{K. Kuyumzhiyan}

\thanks{This research benefited from the support of the <<EADS Foundation Chair in Mathematics>>, RFBR grant 09-01-00648a, Russian-French Poncelet Laboratory (UMI 2615 of CNRS), and Dmitry Zimin fund <<Dynasty>>.}

\address{Department of Algebra, Faculty of Mechanics
and Mathematics, Moscow State University, Leninskie Gory 1,
GSP-1, Moscow, 119991, Russia}
\email{karina@mccme.ru}


\begin{abstract}
Let $T$ be a maximal torus in a classical linear group~$G$.
In this paper we find all simple rational $G$-modules $V$ such that
for each vector $\mathsf{v}\in V$ the closure of its $T$-orbit is a normal
affine variety. For every other $G$-module we present a $T$-orbit with
the non-normal closure. We use a combinatorial criterion of normality
formulated in terms of the set of weights of a simple $G$-module. This work is a continuation of~\cite{KK},
where the same problem is solved in the case $G=SL(n)$.
\end{abstract}

\keywords
{Toric variety, normality, simple module, classical root system, weight decomposition}

\maketitle


\section*{Introduction} 

Let~$G$ be an affine algebraic group over an algebraically closed field~$\kk$ of characteristic zero, and let~$G$ act on an affine algebraic 
variety. Recall that an irreducible affine algebraic variety~$X$ is called {\it normal} if its algebra of regular functions $\kk[X]$ is 
integrally closed in its field of fractions. 
The study of normality of orbit closures has a long history. The first results were obtained by Kostant~\cite{Kost}. He showed that 
for a reductive group~$G$ the full nilpotent cone in the adjoint module is normal. 
Kraft and Procesi~\cite{KP1} proved that in the adjoint module~$\mathfrak{sl(n)}$ 
the closures of all $SL(n)$-orbits are normal. 
The analogous result for~$SL(n)$ over a field of positive characteristic was established by Donkin~\cite{Do}. Later, Kraft and 
Procesi~\cite{KP2} and Sommers~\cite{So2} studied the same question for the adjoint modules of other classical groups. In particular, 
the orbits with non-normal closures are constructed in~\cite{KP2}, in the language of Young diagrams. 
The cases~$F_4$, $G_2$, $E_6$ are considered by Broer, Kraft, and Sommers in~\cite{Br}, \cite{Kraft}, and~\cite{So1}. There 
is no complete answer for~$E_7$ and~$E_8$ yet.

Now let us consider actions of an algebraic torus~$T$, i.e., of an affine algebraic group isomorphic to 
$\kk^\times\times\ldots\times \kk^\times$, where $\kk^\times=\kk\setminus \{0\}$. 
An irreducible algebraic variety~$X$ is called \textit{toric} if it is normal and if it admits a regular $T$-action with an open orbit. 
Toric varieties play an important role in algebraic geometry, topology, and combinatorics, since they can be completely described in terms 
of convex geometry, see e.g.~\cite{Ful}. If an algebraic torus~$T$ acts on a variety~$Y$, then the orbit closure $X=\overline{Ty}$ 
of a point $y\in Y$ is a natural candidate to be a toric variety. To verify it, one should check that~$X$ is normal. 

The normality property for the closure of a $T$-orbit in a module has a well-known combinatorial interpretation. Let $v_1,\ldots,v_r$ be vectors of a vector space~$\QQ^n$.
For any set $A$ of rational numbers we denote by $A(v_1,\ldots,v_r)$ the set of all linear combinations of vectors $v_1,\ldots,v_r$ with coefficients in $A$. 
The set $\{v_1,\ldots,v_r\}$ is called {\it saturated} if  
$$
\ZZ_{\geqslant 0} (v_1, v_2, \dots, v_r) = \ZZ (v_1, v_2, \dots,
v_r) \cap \Qg0 (v_1, v_2, \dots, v_r).
$$
In these terms, the closure of a $T$-orbit of a vector in the rational $T$-module is normal if and only if 
the set of weights in the weight decomposition of this vector is saturated.

Let $G$ be a connected semisimple algebraic group over an algebraically closed field $\kk$ of characteristic zero. Let $T\subseteq G$ be a fixed 
maximal torus. Consider a finite-dimensional rational $G$-module $V$. We are seeking for  modules~$V$ with the following property: 
for each vector $\mathsf{v}\in V$ the closure of its $T$-orbit $\overline{T\mathsf{v}}$ is a normal (affine) algebraic variety.

Earlier this property was checked by J. Morand~\cite{JM} in the case when $G$ is a simple group and $V$ is its adjoint module;
this problem for $G=SL(n)$ was also considered in~\cite[Ex. 3.7]{Stu} and \cite{Stumf}. In her previous paper~\cite{KK} the author
checks this property for all simple $SL(n)$-modules. For exceptional root systems, this problem is studied in~\cite{BK}.

The aim of this paper is to investigate this property for all simple modules of the special orthogonal group~$SO(n)$, the spinor group~$\Spin(n)$, and 
the symplectic group~$Sp(2n)$. All the proofs use the language of root systems. Recall that a simple $G$-module is uniquely defined by its highest weight~$\lambda$.
Any dominant weight $a_1\pi_1+\ldots+a_r\pi_r$ can play the role of~$\lambda$, where~ $\pi_1,\ldots,\pi_r$ stand for the fundamental weights, and
$a_1,\ldots,a_r$ are nonnegative integers. We enumerate the fundamental weights as in~\cite[Section 4]{VO}.

\begin{theo}\label{mth}
For the classical algebraic groups of types $B_n$, $C_n$, and $D_n$, the following modules, and their dual ones, are the 
only modules where the closures of all maximal torus orbits are normal. 
\begin{center}
\begin{tabular}{|c|c|c|}
\hline
Root system & Highest weight & Checked in \\
\hline 
$B_n, n\geqslant 2$&$\pi_1$ & \rm Case $\ref{line1}$\\
\hline
$B_2$&$\pi_2$ & \rm Case $\ref{sluchaj3}$\\
\hline
$B_2$ & $2\pi_2$ & \rm Case $\ref{line9}$\\
\hline
$B_3$&$\pi_3$ & \rm Case $\ref{sluchaj3}$\\
\hline
$B_4$&$\pi_4$ & \rm Case $\ref{sluchaj3}$\\
\hline
$C_n, n\geqslant 3$&$\pi_1$ & \rm Case $\ref{line8}$\\
\hline
$C_3$&$\pi_2$ & \rm Case $\ref{sluchaj5}$\\
\hline
$C_4$ &$\pi_2$ & \rm Case $\ref{sluchaj5}$\\
\hline
$D_n, n\geqslant 4$&$\pi_1$ &  \rm Case $\ref{line2}$\\
\hline
$D_4$ & $\pi_2$ & \rm Case $\ref{line12}$\\
\hline
$D_4$&$\pi_3$ & \rm Case $\ref{line4}$\\
\hline
$D_4$&$\pi_4$ & \rm Case $\ref{line4}$\\
\hline
$D_5$&$\pi_4$ & \rm Case $\ref{line6}$\\
\hline
$D_6$&$\pi_5$ & \rm Case $\ref{line7}$\\
\hline
$D_6$&$\pi_6$ & \rm Case $\ref{line7}$\\
\hline
\end{tabular}
\end{center}
In all the other cases, the module contains a maximal torus orbit with the non-normal closure.
\end{theo}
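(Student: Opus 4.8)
The plan is to translate the statement completely into combinatorics of weight sets and then combine two monotonicity reductions with a finite case check. By the criterion recalled in the introduction, $\overline{T\mathsf v}$ is normal if and only if the support of $\mathsf v$, i.e.\ the set of weights occurring in its weight decomposition, is saturated. Every weight space of $V$ is nonzero, and the components of $\mathsf v$ in distinct weight spaces may be switched on or off independently, so the support of a suitable vector can be made equal to any prescribed subset of the weight set $\Lambda=\Lambda(\lambda)$ of $V$. Therefore the property ``all closures $\overline{T\mathsf v}$ in $V_\lambda$ are normal'' is equivalent to the purely combinatorial condition that \emph{every} subset of $\Lambda(\lambda)$ be saturated. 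Duality costs nothing here: the dual module has weight set $-\Lambda(\lambda)$, and a finite set $S$ is saturated if and only if $-S$ is, so it suffices to treat one representative of each dual pair, exactly as in the table.

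The reductions rest on the inclusion
$$
\lambda+\Lambda(\mu)\ \subseteq\ \Lambda(\lambda+\mu)
$$
for dominant weights $\lambda,\mu$, which follows from the convexity (saturation) description of weight sets together with the fact that $w^{-1}\lambda\preceq\lambda$ for all $w$ in the Weyl group. Since a translation carries saturated sets to saturated sets, a non-saturated subset of $\Lambda(\mu)$ yields a non-saturated subset of $\Lambda(\lambda+\mu)$; hence, for a fixed root system, badness is preserved under adding a dominant weight, the good highest weights are closed under subtracting dominant weights, and it is enough to locate the minimal bad ones. A second, rank reduction comes from restricting to a sub-root-system of the same type and smaller rank: a slice of $\Lambda(\lambda)$ parallel to the span of the subsystem contains a copy of the smaller weight set, and, lying in a single affine subspace, it inherits saturation, so a bad configuration at rank $n-1$ propagates to rank $n$. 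These two principles cut the infinitely many unlisted highest weights down to finitely many minimal bad cases.

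For the negative direction it then remains, for each minimal bad case, to exhibit an explicit non-saturated subset $\{w_1,\dots,w_k\}\subseteq\Lambda(\lambda)$: concretely, a lattice point of $\ZZ(w_1,\dots,w_k)\cap\Qg0(w_1,\dots,w_k)$ admitting no $\ZZ_{\geqslant 0}$-representation. Each such witness is precisely a maximal torus orbit with non-normal closure, as promised in the last line of the statement; these are the computations indexed by ``Case\ \dots'' in the table. For the positive direction one must verify saturation of \emph{every} subset of the (small, explicit) weight sets in the list --- the vector representations $\pi_1$, whose weights are $\{\pm e_i\}$ together with $0$ in type $B_n$, and the handful of spin and second-fundamental representations --- using the coordinate form of the weights and the symmetry of these configurations.

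The main obstacle is the positive direction. Showing a module is bad requires only one well-chosen counterexample, but showing it is good is a universally quantified statement over exponentially many subsets, so a structural argument is needed rather than an enumeration; this is delicate precisely at the boundary of the list, where one must separate the narrow band of saturated fundamental and spin representations ($B_2$: $\pi_2,2\pi_2$; $B_3$: $\pi_3$; $B_4$: $\pi_4$; $C_3,C_4$: $\pi_2$; $D_4$: $\pi_2,\pi_3,\pi_4$; $D_5$: $\pi_4$; $D_6$: $\pi_5,\pi_6$) from their slightly larger, non-saturated relatives. The rank reduction lemma, with its bookkeeping of which sub-configurations of weights lie in a common affine subspace, is the other technically demanding ingredient.
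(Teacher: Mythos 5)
Your reduction machinery has two genuine flaws, both stemming from the same misconception. First, you claim that since $\lambda+M(\mu)\subseteq M(\lambda+\mu)$ and ``a translation carries saturated sets to saturated sets,'' a non-saturated subset of $M(\mu)$ yields one in $M(\lambda+\mu)$. But saturation is \emph{not} translation-invariant: it is defined via the semigroup $\ZZ_{\geqslant 0}(v_1,\dots,v_r)$, the lattice $\ZZ(v_1,\dots,v_r)$ and the cone $\Qg0(v_1,\dots,v_r)$, all of which change when every generator is shifted. Concretely, $\{2,3\}\subset\QQ$ is not saturated ($1\in\ZZ(2,3)\cap\Qg0(2,3)$ but $1\notin\ZZ_{\geqslant 0}(2,3)$), yet its translate $\{0,1\}=\{2,3\}-2$ is saturated. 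The same example breaks your second, rank-reduction principle: the affine lift $\{(2,1),(3,1)\}\subset\QQ^2$ of the non-saturated set $\{2,3\}$ is linearly independent and hence saturated by Lemma~\ref{l.oln}\,(i), so a ``bad configuration lying in a single affine subspace'' does not automatically stay bad after embedding into a larger weight set. The paper avoids both traps. Its Lemma~\ref{l.ovl} shows that $\lambda\succeq\lambda'$ (dominance order) implies $M(\lambda')\subseteq M(\lambda)$ as a genuine subset, with no translation, so an NSS of $M(\lambda')$ literally \emph{is} an NSS of $M(\lambda)$; the reduction of arbitrary unlisted $\lambda$ to the explicit counterexamples is then done by ad hoc ``shift'' arguments inside each root system. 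And where the rank must be increased, the paper appends \emph{zero} coordinates whenever possible (a linear, not affine, embedding, which manifestly preserves all three objects above), while in the two places where a nonzero constant coordinate is appended (the spin counterexamples, e.g.\ Counterexample~\ref{ex20}) it supplies a separate argument: every $\ZZ_{\geqslant 0}$-combination takes the same value on the appended coordinates as on the first one, so non-saturation survives. Your proposal has no substitute for these verifications.

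The second gap is that the positive direction --- which you yourself identify as the main obstacle --- is not proved but only described as requiring ``a structural argument.'' That argument is the technical core of the paper: Sturmfels' theorem that unimodular sets are hereditarily normal (Theorem~\ref{lemmaunimodsverhnas}), the calculus of almost unimodular sets with volumes $m$ and $2m$ (Lemmas~\ref{kramer}, \ref{nemin}, \ref{resultm2m}, giving coefficients in $\{0,\pm\tfrac12,\pm1\}$ and exploiting a Weyl-group action permuting the lines of a maximal-volume basis), and, for the spin modules of $D_5$ and $D_6$ (Cases~\ref{line6} and~\ref{line7}), a considerably harder analysis of determinant values $\pm m,\pm 2m,\pm 3m$ that includes two computer-assisted checks. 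Without this machinery none of the borderline entries of the table ($B_4\colon\pi_4$; $C_4\colon\pi_2$; $D_5\colon\pi_4$; $D_6\colon\pi_5,\pi_6$) is actually verified, so even after repairing the reductions the theorem would remain unestablished.
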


For a simple $G$-module $V(\lambda)$ with the highest weight $\lambda$ we denote by $M(\lambda)$ the set of all its weights with respect to the
maximal torus~$T$. Now, the closures of all $T$-orbits in the module $V(\lambda)$ are normal if and only if all subsets in~$M(\lambda)$ are saturated, i.e. if~$M(\lambda)$ is \textit{hereditarily normal}. 

The plan of the paper is the following. In the first section, we recall some necessary facts about the 
weight decomposition and modules with the given highest weights, and we also introduce some combinatorial notions 
concerning saturated sets. In sections 2--4, we prove Theorem~\ref{mth} for the root systems~$B_n$, $C_n$, and~$D_n$, 
respectively. We check hereditary normality of sets~$M(\lambda)$ for the weights~$\lambda$ listed in Theorem~\ref{mth} 
(positive cases), and in every other (negative) case we indicate a non-saturated subset. 
In the most difficult positive cases, we use the properties of unimodular sets of vectors and their generalizations. In negative cases, it is enough 
to treat minimal with respect to inclusion sets of weights, which are not listed in Theorem~$\ref{mth}$. 
Let us note that the most difficult positive cases are of independent interest as combinatorial facts. 

The author is grateful to her scientific supervisor I.V. Arzhantsev for
the formulation of the problem and fruitful discussions. Thanks are also due to I.I. Bogdanov for
useful comments, to R.A. Devyatov for the computer-based check of the most difficult cases, and to A.Yu. Novoseltsev for important remarks.

\section{Preliminaries}
We always denote by $e_1,\ldots,e_n$ the standard basis in~$\QQ^n$. The fractional part of a real value~$q$ is denoted by $\{q\}$, the integer part is denoted by $\lfloor q \rfloor$. The sign~$\kratno$ stands for divisibility, i.e. $a\kratno b \iff \exists c\in\ZZ$, $a=bc$.

\subsection{Weight decomposition}

Let $T$ be an algebraic torus and let $\Lambda=\Lambda(T)$ be the lattice of its $T$-characters.
For every rational $T$-module $V$ we have its weight decomposition
$$
V=\bigoplus_{\mu\in \Lambda} V_\mu, \quad\text{where}\quad V_\mu=\{\mathsf{v}\in V \,|\, t\mathsf{v}=\mu(t)\mathsf{v}\}.
$$
Denote by $M(V)=\{ \mu\in \Lambda \,|\, V_\mu\ne 0 \}$ the set of weights of the module~$V$.
For every nonzero vector $\mathsf{v}$ we have its weight decomposition $\mathsf{v}=\mathsf{v}_{\mu_1}+\dots+\mathsf{v}_{\mu_s}$, 
$\mathsf{v}_{\mu_i}\in V_{\mu_i}$, $\mathsf{v}_{\mu_i}\ne 0$.
Below, we consider weights $\mu\in \Lambda$ as points of the rational vector space $\Lambda_{\QQ} := \Lambda\otimes_\ZZ \QQ$.

The following statement is a well-known combinatorial criterion of normality of a $T$-orbit closure in a $T$-module, 
see~\cite[I, \S~1, Lemma~1]{KKMSD}.

\begin{proposition}
Let $V$ be a finite-dimensional rational $T$-module and $\mathsf{v}=\mathsf{v}_{\mu_1}+\dots+\mathsf{v}_{\mu_s}$ be the weight decomposition of a vector $\mathsf{v}\in V$.
The closure $\overline{T\mathsf{v}}$ of the $T$-orbit of $\mathsf{v}$ is normal if and only if the set of characters $\{\mu_1, \dots, \mu_s\}$ is saturated.
\end{proposition}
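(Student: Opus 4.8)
The plan is to realize the orbit closure as an affine toric variety and convert its normality into the semigroup condition. First I would pass to the $T$-invariant subspace $W=\lin(\mathsf{v}_{\mu_1},\ldots,\mathsf{v}_{\mu_s})$. Since each $\mathsf{v}_{\mu_i}$ is a weight vector, each line $\kk\mathsf{v}_{\mu_i}$ is $T$-stable, so $W$ is $T$-invariant and contains the whole orbit $T\mathsf{v}$; moreover the $\mathsf{v}_{\mu_i}$ are linearly independent, being nonzero vectors from the distinct weight spaces $V_{\mu_i}$. Taking them as a basis identifies $W$ with $\kk^s$, on which $T$ acts diagonally by $t\cdot(x_1,\ldots,x_s)=(\mu_1(t)x_1,\ldots,\mu_s(t)x_s)$, and in these coordinates $\mathsf{v}=(1,\ldots,1)$. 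Thus $\overline{T\mathsf{v}}$ is the closure of the image of the orbit map $\varphi\colon t\mapsto(\mu_1(t),\ldots,\mu_s(t))$.

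Next I would compute the coordinate ring. The comorphism $\varphi^*$ sends the coordinate $x_i$ to the character $\mu_i\in\Lambda\subseteq\kk[T]$, so the restriction of $\kk[x_1,\ldots,x_s]$ to the orbit has image the subalgebra of the Laurent polynomial ring $\kk[T]=\kk[\Lambda]$ generated by $\mu_1,\ldots,\mu_s$. A monomial $x_1^{a_1}\cdots x_s^{a_s}$ maps to the character $a_1\mu_1+\cdots+a_s\mu_s$, so this image is exactly the semigroup algebra $\kk[S]$ with $S=\ZZ_{\geqslant 0}(\mu_1,\ldots,\mu_s)$. Since the orbit is dense in its closure, $\ker\varphi^*$ is the ideal of $\overline{T\mathsf{v}}$ and $\kk[\overline{T\mathsf{v}}]\cong\kk[S]$; the field of fractions of this domain coincides with that of $\kk[\ZZ S]$, where $\ZZ S=\ZZ(\mu_1,\ldots,\mu_s)$ is the group generated by $S$.

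The crux is the classical description of the integral closure of $\kk[S]$. Let $\b S=\ZZ S\cap\Qg0 S$ be the saturation of $S$; equivalently $\b S=\{m\in\ZZ S:\ km\in S \text{ for some } k\geqslant 1\}$, the two descriptions agreeing by clearing denominators. I would show that the integral closure of $\kk[S]$ in its field of fractions is $\kk[\b S]$. One inclusion is immediate: for $m\in\b S$ we have $km\in S$ for some $k\geqslant 1$, so the character $\chi^m$ is a root of the monic polynomial $X^k-\chi^{km}$ with coefficient in $\kk[S]$, hence integral over $\kk[S]$. For the reverse inclusion the essential point is that $\kk[\b S]$ is itself integrally closed — the standard fact that the algebra of a saturated affine semigroup is normal, proved by writing $\kk[\b S]$ as an intersection of the (normal) localizations of $\kk[\ZZ S]$ cut out by the facets of the cone $\Qg0 S$. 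By Gordan's lemma $\b S$ is finitely generated, and each generator is integral over $\kk[S]$, so $\kk[\b S]$ is a finite extension of $\kk[S]$ with the same field of fractions; being normal, it is precisely the integral closure.

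Finally, normality of $\overline{T\mathsf{v}}$ means that $\kk[S]$ is integrally closed, i.e. $\kk[S]=\kk[\b S]$, which holds if and only if $S=\b S$, that is
\[
\ZZ_{\geqslant 0}(\mu_1,\ldots,\mu_s)=\ZZ(\mu_1,\ldots,\mu_s)\cap\Qg0(\mu_1,\ldots,\mu_s),
\]
exactly the condition that $\{\mu_1,\ldots,\mu_s\}$ be saturated. The only real obstacle is the normality of the saturated semigroup algebra $\kk[\b S]$; the rest is bookkeeping. This is precisely the statement invoked from \cite[I, \S~1, Lemma~1]{KKMSD}, and in the paper I would cite it rather than reprove the full normalization theorem for affine semigroup rings.
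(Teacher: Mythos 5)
Your proposal is correct, but note that the paper itself contains no proof of this proposition at all: it is stated as a known criterion and dispatched with a citation to \cite{KKMSD} (Ch.~I, \S~1, Lemma~1). What you have written is essentially a correct reconstruction of the argument behind that citation. Your reduction steps are all sound: the components $\mathsf{v}_{\mu_i}$ are linearly independent (distinct weight spaces), so the orbit closure lives in $W\cong\kk^s$ with the diagonal action, the comorphism of the orbit map identifies $\kk[\overline{T\mathsf{v}}]$ with the affine semigroup ring $\kk[S]$, $S=\ZZ_{\geqslant 0}(\mu_1,\ldots,\mu_s)$, and the integral closure of $\kk[S]$ in its fraction field is $\kk[\overline{S}]$ with $\overline{S}=\ZZ S\cap\Qg0 S$, so normality is exactly saturation. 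You also correctly isolate the one genuinely nontrivial ingredient (normality of the saturated semigroup ring) and observe it can either be proved by the facet-intersection argument or simply cited, which is what the paper does. One small terminological slip: the half-space algebras $\kk[\ZZ S\cap H^+]\cong\kk[t_1^{\pm 1},\ldots,t_{d-1}^{\pm 1},t_d]$ appearing in that argument are not localizations of $\kk[\ZZ S]$; rather, $\kk[\ZZ S]$ is a localization of each of them (equivalently, each is a localization of $\kk[\overline{S}]$). They are normal because they are polynomial--Laurent rings, and an intersection of normal domains sharing a fraction field is normal; with that rewording your argument is complete. In comparison with the paper, your version buys self-containedness at the cost of length, whereas the paper's citation keeps the preliminaries short; either is acceptable, and your instinct to cite \cite{KKMSD} for the normalization fact matches the paper's choice.
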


A finite subset $M$ of a rational vector space $\QQ^n$ is called {\it hereditarily normal} if all its subsets are saturated.

\begin{corollary}
Let $V$ be a finite-dimensional rational $T$-module. The closures of all $T$-orbits in the module $V$ are normal if and only if
the set $M(V)$ is hereditarily normal.
\end{corollary}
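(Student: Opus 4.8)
The plan is to prove the Corollary by deducing it from the Proposition together with the definition of hereditary normality. The Proposition characterizes, for a single vector $\mathsf{v}$, when its $T$-orbit closure is normal: namely, exactly when the set of weights $\{\mu_1,\dots,\mu_s\}$ appearing in the weight decomposition of $\mathsf{v}$ is saturated. The Corollary asserts something about \emph{all} vectors at once, so the main task is to reconcile ``all $T$-orbits are normal'' with ``the single set $M(V)$ is hereditarily normal,'' and the bridge between the two is the observation that the subsets of $M(V)$ that arise as supports of vectors range over \emph{all} subsets of $M(V)$.

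First I would argue the forward direction. Suppose the closures of all $T$-orbits in $V$ are normal; I want to show every subset $N\subseteq M(V)$ is saturated, which is precisely hereditary normality of $M(V)$. Given such an $N=\{\nu_1,\dots,\nu_k\}$, I would construct a witness vector $\mathsf{v}=\mathsf{v}_{\nu_1}+\dots+\mathsf{v}_{\nu_k}$ by choosing, for each weight $\nu_i\in N\subseteq M(V)$, a nonzero vector $\mathsf{v}_{\nu_i}$ in the (nonzero) weight space $V_{\nu_i}$; this is possible exactly because $N\subseteq M(V)$ means each $V_{\nu_i}\ne 0$. The weight decomposition of this $\mathsf{v}$ has weight set exactly $N$, so by hypothesis $\overline{T\mathsf{v}}$ is normal, and the Proposition forces $N$ to be saturated. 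Since $N$ was an arbitrary subset of $M(V)$, the set $M(V)$ is hereditarily normal.

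Conversely, assume $M(V)$ is hereditarily normal. Take any nonzero $\mathsf{v}\in V$ with weight decomposition $\mathsf{v}=\mathsf{v}_{\mu_1}+\dots+\mathsf{v}_{\mu_s}$, where each $\mathsf{v}_{\mu_i}\ne 0$. Then each $\mu_i$ lies in $M(V)$, so $\{\mu_1,\dots,\mu_s\}$ is a subset of $M(V)$ and hence saturated by the definition of hereditary normality. Applying the Proposition in the reverse direction gives that $\overline{T\mathsf{v}}$ is normal. As $\mathsf{v}$ was arbitrary, all $T$-orbit closures in $V$ are normal, completing the equivalence.

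The argument is essentially a quantifier-matching exercise and I do not expect a serious obstacle; the one point requiring a little care is the forward direction, where I must be sure that \emph{every} subset of $M(V)$ is realizable as the support of an actual vector. This is where the hypothesis that $V$ is a $T$-module (rather than an abstract collection of characters) is used: the nonvanishing of the weight spaces $V_\nu$ for $\nu\in M(V)$ guarantees that each weight can be ``switched on'' independently, so the supports of vectors sweep out the full power set of $M(V)$. Once this is noted, both implications reduce to direct applications of the Proposition.
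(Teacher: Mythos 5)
Your proof is correct and is exactly the argument the paper leaves implicit: the Corollary is stated there without proof as an immediate consequence of the Proposition and the definition of hereditary normality, and your two directions (realizing an arbitrary subset of $M(V)$ as the weight support of a vector by picking nonzero elements of the corresponding weight spaces, and conversely noting that every vector's support is a subset of $M(V)$) spell out precisely that deduction.
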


Notice that for the dual module $V^*$ one has $M(V^*)= - M(V)$.
This means that the property of hereditary normality for the set $M(V)$ is equivalent to the same property for the set~$M(V^*)$.

\subsection{Representations with the given highest weight}
Let $G$ be a connected simply connected semisimple algebraic group, let~$B$ be a Borel subgroup in~$G$, and let $T\subset B$ be the maximal torus.
Denote by~$\Phi$ the root system of the Lie algebra~$\Lie(G)$ associated with the maximal torus~$T$. Let~$\Phi^+$ and
$\Delta=\{\alpha_1,\ldots,\alpha_r\}\subseteq \Phi^+$, respectively, be the subsets of positive roots and of simple roots, 
corresponding to the Borel subgroup~$B$. Denote by~$\pi_i$ the fundamental 
weight corresponding to the simple root~$\alpha_i$. It is well-known that the weights $\pi_1,\ldots,\pi_r$ form a basis of the character lattice $\Lambda(T)$
of the torus~$T$. The semigroup generated by the fundamental weights coincides with the semigroup of dominant weights~$\Lambda_+$. The subgroup of~$\Lambda$ generated 
by the root system~$\Phi$ is called the {\it root lattice}, we denote it by~$\Xi$. Then $\Xi$ is the sublattice of~$\Lambda$ of finite index, and
$\alpha_1,\ldots,\alpha_r$ form a basis of $\Xi$.

Let $V(\lambda)$ be a simple $G$-module with the highest weight $\lambda\in\Lambda_+$. Recall the following description of the set of $T$-weights of the 
module~$V(\lambda)$. Let $W$ be the Weyl group of the root system~$\Phi$. Then $W$ can be realized as a finite group of linear transformations
of the vector space~$\Lambda_\QQ$ generated by reflections $s_{\alpha}$, where $\alpha$ is a root, see~\cite{Hum}. The {\it weight polytope} $P(\lambda)$ of the module $V(\lambda)$ is the convex hull 
$\conv\{w\lambda \,|\, w\in W\}$ of the $W$-orbit of the point $\lambda$ in $\Lambda_\QQ$. Then
$$
M(\lambda)=(\lambda+\Xi)\cap P(\lambda),
$$
see~\cite[Theorem 14.18]{FH}. There is a partial order on the vector space~$\Lambda_\QQ$: $\lambda \succeq \mu $  if and only if $\lambda-\mu$
is a linear combination of simple roots with nonnegative integer coefficients. We use the following classical lemma.

\begin{lemma}\label{l.ovl}
Let $\lambda,\; \lambda'\in \Lambda_+$. Suppose that $\lambda \succeq \lambda'$. Then $M(\lambda)\supseteq M(\lambda')$.
\end{lemma}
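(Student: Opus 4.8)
The plan is to show the weight inclusion $M(\lambda')\subseteq M(\lambda)$ by combining the description $M(\mu)=(\mu+\Xi)\cap P(\mu)$ with two separate facts: an inclusion of weight polytopes and an agreement of the relevant affine lattice cosets. Concretely, for any dominant weight $\mu$ the set of weights lies in the coset $\mu+\Xi$, so the first thing to check is that the hypothesis $\lambda\succeq\lambda'$ forces $\lambda+\Xi=\lambda'+\Xi$; this is immediate because $\lambda-\lambda'$ is by definition an integer combination of simple roots, hence an element of the root lattice $\Xi$. Thus the two ambient cosets on which $M(\lambda)$ and $M(\lambda')$ live coincide, and the problem reduces to comparing the two convex polytopes.

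Next I would prove the polytope inclusion $P(\lambda')\subseteq P(\lambda)$. Since $P(\lambda)=\conv\{w\lambda\mid w\in W\}$ and likewise for $\lambda'$, it suffices to show that every vertex $w\lambda'$ lies in $P(\lambda)$; by $W$-invariance of both polytopes it is enough to handle the dominant representative, i.e. to show $\lambda'\in P(\lambda)$. The standard way to see this is to recall that for a dominant weight the weight polytope $P(\lambda)$ coincides with the set of all weights that are $\preceq\lambda$ in the dominance order after passing to the dominant chamber; more precisely, a dominant weight $\mu$ lies in $P(\lambda)$ if and only if $\lambda\succeq\mu$. Applying this with $\mu=\lambda'$ and using the hypothesis $\lambda\succeq\lambda'$ gives $\lambda'\in P(\lambda)$, and hence $P(\lambda')\subseteq P(\lambda)$.

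Combining the two steps finishes the argument: for any $\nu\in M(\lambda')$ we have $\nu\in\lambda'+\Xi=\lambda+\Xi$ and $\nu\in P(\lambda')\subseteq P(\lambda)$, so $\nu\in(\lambda+\Xi)\cap P(\lambda)=M(\lambda)$, which is exactly the desired inclusion $M(\lambda')\subseteq M(\lambda)$.

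I expect the main obstacle to be the clean justification of the polytope-inclusion step, that is, the characterization ``a dominant weight $\mu$ lies in $P(\lambda)$ if and only if $\lambda\succeq\mu$.'' The reduction to the dominant chamber (via the $W$-invariance of the polytope and the fact that every $W$-orbit meets the dominant chamber in a unique point) is routine, but the equivalence itself is the genuine content; rather than reproving it I would quote it from a standard reference such as \cite{Hum} or \cite{FH}, in the same spirit as the paper quotes \cite[Theorem 14.18]{FH} for the description of $M(\lambda)$. Everything else — the coset identification and the final intersection — is formal.
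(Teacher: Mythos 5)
Your argument is correct, and at the level of skeleton it is parallel to the paper's: both proofs reduce to placing the dominant vertex $\lambda'$ inside the larger object, then finish with $W$-invariance, convexity, and agreement of lattice cosets. The difference is the key quoted ingredient. The paper cites Bourbaki's criterion (\cite[Exercice 1 to Section VIII, \S 7]{Bour}): a weight $\lambda'\in\lambda+\Xi$ belongs to $M(\lambda)$ if and only if $\lambda-w\lambda'$ is a nonnegative integer combination of simple roots for \emph{every} $w\in W$; it then verifies this by hand, the case $w=e$ being exactly the hypothesis $\lambda\succeq\lambda'$, and the case $w\ne e$ following from $w\lambda'=\lambda'-\mu$ with $\mu$ a sum of positive roots. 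This lands $\lambda'$ (hence every $w\lambda'$) directly in $M(\lambda)$, and convexity finishes. You instead split the problem into the coset identity $\lambda+\Xi=\lambda'+\Xi$ (which the paper uses only implicitly) and the polytope inclusion $P(\lambda')\subseteq P(\lambda)$, the latter delegated to the standard description of weight polytopes in the dominant chamber. Your route separates the two mechanisms (lattice and convex geometry) more cleanly; the paper's route is closer to self-contained, since its dominant-vertex step is a two-line computation on top of the cited criterion rather than an appeal to a polytope theorem.

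One caution about the fact you propose to cite: as literally stated it is false. With the paper's dominance order $\succeq$ (nonnegative \emph{integer} coefficients of simple roots), the claim ``a dominant weight $\mu$ lies in $P(\lambda)$ if and only if $\lambda\succeq\mu$'' fails in the ``only if'' direction: already for $A_1$ one has $0\in P(\pi_1)$, yet $\pi_1-0=\frac12\alpha_1$ is not a nonnegative integer combination of simple roots. The correct statement in the standard references uses nonnegative \emph{rational} (or real) coefficients: a dominant $\mu$ lies in $P(\lambda)$ if and only if $\lambda-\mu\in\Qg0(\alpha_1,\ldots,\alpha_r)$. This does not damage your proof, for two reasons: you only use the ``if'' direction, which holds a fortiori for integer coefficients; and in your application $\lambda'\in\lambda+\Xi$, where the two orders coincide, because the simple roots are linearly independent, so a combination that is simultaneously integral and rational-nonnegative has nonnegative integer coefficients. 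But the citation should be phrased with the rational version, otherwise the quoted equivalence is not a theorem.
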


\begin{proof}
Use the criterion from~\cite[Exercice 1 to Section VIII, \S 7]{Bour}: the weight $\lambda'\in \lambda+\Xi$ belongs to $M(\lambda)$ if and only if for all 
$w\in W$ the weight $\lambda-w\lambda'$ is a linear combination of simple roots with nonnegative integer coefficients. First notice that under our assumptions $\lambda'$ belongs to $M(\lambda)$. 
Indeed, for $w=e$ the weight $\lambda-\lambda'$ is a linear combination of simple roots with nonnegative integer coefficients due to the assumption, 
and for $w\ne e$ it is known that $w\lambda'=\lambda'-\mu$, where~$\mu$
is a sum of positive roots. Hence $\lambda-w\lambda'=\lambda-\lambda'+\mu$ is a linear combination of simple roots with nonnegative integer coefficients. It means that 
$\lambda'\in M(\lambda)$, and all points of the form $w\lambda'$, where $w\in W$, belong to~$M(\lambda)$. Using convexity, we obtain that
$M(\lambda')\subseteq M(\lambda)$.
\end{proof}

\begin{corollary}\label{ovlozhenii}
Let $\lambda'\in \Lambda_+$, and assume that $M(\lambda')$ is not hereditarily normal. Then for all $\lambda \in \Lambda_+$ such 
that~$\lambda \succeq \lambda'$ the set $M(\lambda)$ is not hereditarily normal. 
\end{corollary}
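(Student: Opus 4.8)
The plan is to reduce Corollary \ref{ovlozhenii} directly to the containment of weight sets supplied by Lemma \ref{l.ovl}, together with the elementary observation that hereditary normality is inherited by subsets. Concretely, suppose $\lambda,\lambda'\in\Lambda_+$ with $\lambda\succeq\lambda'$, and assume for contradiction that $M(\lambda)$ \emph{is} hereditarily normal. By Lemma \ref{l.ovl} the hypothesis $\lambda\succeq\lambda'$ yields the inclusion $M(\lambda')\subseteq M(\lambda)$. The key point is then that hereditary normality passes to subsets: if every subset of $M(\lambda)$ is saturated, then in particular every subset of the smaller set $M(\lambda')$ is a subset of $M(\lambda)$ and hence saturated, so $M(\lambda')$ is itself hereditarily normal. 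This contradicts the assumption that $M(\lambda')$ is not hereditarily normal, and the corollary follows by contraposition.

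The only step that requires a word of justification is the monotonicity of hereditary normality under inclusion, and this is immediate from the definitions rather than a genuine obstacle. By definition, a finite set is hereditarily normal precisely when \emph{all} of its subsets are saturated. If $M(\lambda')\subseteq M(\lambda)$ and $M(\lambda)$ is hereditarily normal, then any subset $S\subseteq M(\lambda')$ also satisfies $S\subseteq M(\lambda)$, and therefore $S$ is saturated because it is one of the subsets of the hereditarily normal set $M(\lambda)$. Since $S$ was an arbitrary subset of $M(\lambda')$, the set $M(\lambda')$ is hereditarily normal. No property of saturation beyond its definition is needed here; the argument is purely about the quantifier ``all subsets'' ranging over a larger collection when we pass to the bigger set.

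Thus the whole proof is a two-line contrapositive argument, with Lemma \ref{l.ovl} doing all the real work by converting the order relation $\lambda\succeq\lambda'$ on dominant weights into the set-theoretic inclusion $M(\lambda')\subseteq M(\lambda)$. I do not anticipate any serious difficulty: there is no combinatorial computation, no appeal to the structure of the root system, and no case analysis. The statement is essentially the packaging of Lemma \ref{l.ovl} into the exact form in which it will be invoked later, so that once a single ``minimal'' non-hereditarily-normal weight set is exhibited, all dominant weights dominating it are automatically ruled out. This is why the corollary is stated immediately after the lemma and phrased in the negative: it is the tool that makes the enumeration of negative cases finite.
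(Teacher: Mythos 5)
Your proof is correct and matches the paper's intended argument exactly: the paper states this as an immediate corollary of Lemma~\ref{l.ovl}, relying on precisely the two observations you make, namely that $\lambda\succeq\lambda'$ gives $M(\lambda')\subseteq M(\lambda)$ and that a non-saturated subset of $M(\lambda')$ is then also a non-saturated subset of $M(\lambda)$. Whether phrased directly or by contraposition as you do, this is the same two-line argument, so there is nothing to add.
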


\smallskip

\subsection{Non-saturated sets}\label{sec:unimod} 
The proof of the following lemma can be found in~\cite{JM}.

\begin{lemma}\label{l.oln}
Let $M\subset \QQ^n$ be a finite set of vectors.
\begin{enumerate}
\item[(i)]
If $M$ is linearly independent, then $M$ is saturated.
\item[(ii)] 
If $M$ is not saturated and contains both vectors $v$ and $-v$, then either $M\backslash\{ v\}$ or
$M\backslash \{-v\}$ is not saturated.
\item[(iii)]
Let $v\in\Qg0(M)$.
Then there exists a linearly independent subset $M'\subseteq
M$ such that $v\in\Qg0(M')$. 
\end{enumerate}
\end{lemma}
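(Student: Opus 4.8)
For part~(i) the plan is to exploit uniqueness of representation. If $M=\{v_1,\dots,v_r\}$ is linearly independent, then every vector of $\lin(M)$ is a unique $\QQ$-linear combination of the $v_i$. So I would take an arbitrary $x\in\ZZ(M)\cap\Qg0(M)$, write it once as an integer combination $x=\sum b_iv_i$ with $b_i\in\ZZ$ and once as a nonnegative rational combination $x=\sum c_iv_i$ with $c_i\in\Qg0$, and compare: uniqueness forces $b_i=c_i$ for all $i$, so each coefficient lies in $\ZZ\cap\Qg0=\ZZ_{\geqslant 0}$. Hence $x\in\ZZ_{\geqslant 0}(M)$ and $M$ is saturated.

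For part~(ii) I would argue by contraposition, assuming that both $M\backslash\{v\}$ and $M\backslash\{-v\}$ are saturated and deducing that $M$ is saturated. The key preliminary observation is that all three sets generate the same lattice, $\ZZ(M)=\ZZ(M\backslash\{v\})=\ZZ(M\backslash\{-v\})$, because $v=-(-v)$ already lies in $\ZZ(M\backslash\{v\})$ and $-v$ lies in $\ZZ(M\backslash\{-v\})$. Now I take $x\in\ZZ(M)\cap\Qg0(M)$ and write $x=\alpha v+\beta(-v)+y$ with $\alpha,\beta\in\Qg0$ and $y$ a nonnegative rational combination of $M\backslash\{v,-v\}$. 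Combining the first two terms into $(\alpha-\beta)v$, I split on the sign of $\alpha-\beta$: if $\alpha-\beta\geqslant 0$ then $x=(\alpha-\beta)v+y\in\Qg0(M\backslash\{-v\})$, while if $\alpha-\beta\leqslant 0$ then $x=(\beta-\alpha)(-v)+y\in\Qg0(M\backslash\{v\})$. In either case $x$ lies in $\ZZ(M')\cap\Qg0(M')$ for the corresponding saturated $M'$, hence in $\ZZ_{\geqslant 0}(M')\subseteq\ZZ_{\geqslant 0}(M)$. This shows $M$ is saturated, giving the contrapositive.

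Part~(iii) is the conical version of Carath\'eodory's theorem, which I would prove by a minimal-support argument. Among all representations $v=\sum_{w\in M}c_ww$ with $c_w\in\Qg0$, I choose one whose support $M_0=\{w\in M\mid c_w>0\}$ has the smallest possible cardinality. If $M_0$ were linearly dependent, there would be a nontrivial relation $\sum_{w\in M_0}d_ww=0$, and after negating it if necessary I may assume some $d_w$ is positive. Subtracting the multiple $t=\min_{d_w>0}c_w/d_w$ of this relation from the representation keeps every coefficient nonnegative while sending at least one of them to zero, strictly shrinking the support and contradicting minimality. Hence $M_0$ is linearly independent with $v\in\Qg0(M_0)$, so $M'=M_0$ works.

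I expect the main subtlety to be the lattice-invariance step in part~(ii): without the identity $\ZZ(M)=\ZZ(M\backslash\{v\})=\ZZ(M\backslash\{-v\})$ the case analysis would not close, since certifying $x\in\ZZ_{\geqslant 0}(M')$ via saturation of $M'$ requires $x$ to lie in $\ZZ(M')$ and not merely in the cone $\Qg0(M')$. Parts~(i) and~(iii) are then routine, resting respectively on uniqueness of coordinates and on the standard Carath\'eodory reduction.
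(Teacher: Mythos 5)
Your proof is correct in all three parts, but note that there is nothing internal to compare it against: the paper does not prove this lemma at all, it simply refers the reader to Morand's paper \cite{JM}. So your proposal supplies a self-contained argument where the paper gives only a citation. Part (i) — uniqueness of coordinates with respect to a linearly independent family forces the integer representation and the nonnegative-rational representation of $x\in\ZZ(M)\cap\Qg0(M)$ to coincide coefficientwise — is the standard argument and is complete. Part (ii) is the delicate one, and you handle it correctly: in the contrapositive, the identity $\ZZ(M)=\ZZ(M\backslash\{v\})=\ZZ(M\backslash\{-v\})$ is precisely what lets the sign split on $\alpha-\beta$ close, since saturation of the smaller set can only be applied to vectors lying in \emph{its} lattice; you identified this as the crux, which is right. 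Part (iii) is the conical Carath\'eodory argument by minimal support, and the coefficient bookkeeping (subtracting $t=\min_{d_w>0}c_w/d_w$ times the relation keeps all coefficients nonnegative and kills at least one) is exactly as it should be. The only pedantic point you leave implicit is the degenerate case $v=-v=0$ in part (ii), where the two deleted sets coincide; the claim is then trivial because removing the zero vector changes none of $\ZZ(M)$, $\Qg0(M)$, $\ZZ_{\geqslant 0}(M)$, so nothing is lost.
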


We refer to a nonsaturated subset as an {\it NSS}. By an {\it extended nonsaturated subset} we mean a nonsaturated subset $\{v_1, \dots, v_r\}$
together with a vector $v_0$ such that 
$$
v_0 \in (\ZZ (v_1, v_2, \dots,
v_r) \cap \Qg0 (v_1, v_2, \dots, v_r))\setminus  \ZZ_{\geqslant 0} (v_1, v_2, \dots, v_r),
$$
and such that there exists a $\Qg0$-combination
$$
v_0=q_1v_{i_1}+\ldots+q_sv_{i_s}, \quad v_{i_j}\in \{v_1, v_2, \dots, v_r\}
$$
with linearly independent vectors $v_{i_1},\ldots,v_{i_s}$ and coefficients $q_i\in [0,1)$. 

These subsets will be named {\it ENSS}s and will be denoted by~$\{v_0; v_1, \dots, v_r\}$.

\begin{lemma}\label{lemma01}
Suppose that a set $M=\{v_1, \dots, v_r\}$ is not saturated. Then there exists a vector~$v_0$ such 
that~$\{v_0; v_1, \dots, v_r\}$ is an ENSS.
\end{lemma}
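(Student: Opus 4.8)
The plan is to start from the element witnessing non-saturation and then reduce its coefficients modulo one. Since $M$ is not saturated and the inclusion $\ZZ_{\geqslant 0}(v_1,\dots,v_r)\subseteq \ZZ(v_1,\dots,v_r)\cap\Qg0(v_1,\dots,v_r)$ always holds, there exists a vector
$$
w\in \bigl(\ZZ(v_1,\dots,v_r)\cap\Qg0(v_1,\dots,v_r)\bigr)\setminus\ZZ_{\geqslant 0}(v_1,\dots,v_r).
$$
Such a $w$ already lies in the difference set demanded by the first defining property of an ENSS, but it need not admit a $\Qg0$-combination along a linearly independent subset with coefficients in $[0,1)$. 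The whole task is thus to correct $w$ into a vector $v_0$ that satisfies the second property while retaining membership in this difference set.

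First I would invoke Lemma~\ref{l.oln}(iii): since $w\in\Qg0(v_1,\dots,v_r)$, there is a linearly independent subset $\{v_{i_1},\dots,v_{i_s}\}\subseteq\{v_1,\dots,v_r\}$ together with nonnegative rationals $q_1,\dots,q_s$ such that $w=q_1v_{i_1}+\dots+q_sv_{i_s}$. I then set
$$
v_0:=\sum_{j=1}^s \{q_j\}\,v_{i_j}=w-\sum_{j=1}^s \lfloor q_j\rfloor\,v_{i_j},
$$
that is, I replace each coefficient by its fractional part. By construction $v_0$ is a $\Qg0$-combination of the linearly independent vectors $v_{i_1},\dots,v_{i_s}$ with coefficients $\{q_j\}\in[0,1)$, which is precisely the second property in the definition of an ENSS. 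It therefore remains only to check the first property.

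For the first property, note that $v_0\in\Qg0(v_1,\dots,v_r)$ is immediate because $\{q_j\}\geqslant 0$, and $v_0\in\ZZ(v_1,\dots,v_r)$ follows from $w\in\ZZ(v_1,\dots,v_r)$ together with $\sum_{j}\lfloor q_j\rfloor v_{i_j}\in\ZZ(v_1,\dots,v_r)$. The crucial point—the one verification I expect to be the real content of the argument—is that $v_0\notin\ZZ_{\geqslant 0}(v_1,\dots,v_r)$. Indeed, were $v_0$ to lie in $\ZZ_{\geqslant 0}(v_1,\dots,v_r)$, then, using $q_j\geqslant 0$ and hence $\lfloor q_j\rfloor\geqslant 0$, we would obtain $w=v_0+\sum_{j}\lfloor q_j\rfloor v_{i_j}\in\ZZ_{\geqslant 0}(v_1,\dots,v_r)$, contradicting the choice of $w$. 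In other words, subtracting a nonnegative integer combination of the $v_{i_j}$ cannot move a vector from outside $\ZZ_{\geqslant 0}(v_1,\dots,v_r)$ into it. This establishes all required properties, so $\{v_0;\,v_1,\dots,v_r\}$ is an ENSS.
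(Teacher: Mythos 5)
Your proof is correct and follows essentially the same route as the paper's: take a witness of non-saturation, pass to a linearly independent subset via Lemma~\ref{l.oln}\,(iii), and subtract the integer parts of the coefficients to land in $[0,1)$. The only difference is that you spell out explicitly the verification the paper labels ``easy to see,'' namely that $v_0\notin\ZZ_{\geqslant 0}(v_1,\dots,v_r)$ because otherwise $w=v_0+\sum_j\lfloor q_j\rfloor v_{i_j}$ would be a nonnegative integer combination.
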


\begin{proof}
Consider any vector $v_0\in (\ZZ(M)\cap \Qg0(M)) \setminus \ZZ_{\geqslant 0} (M)$, and the corresponding $\Qg0$-combination 
${v_0=q_1v_1+\ldots+q_rv_r}$. By Lemma~\ref{l.oln}\,(iii) there exists a linearly independent subset $\{v_{i_1},\ldots,v_{i_s}\}\subseteq \{v_1,\ldots,v_r\}$ and the collection of $\Qg0$-coefficients $q'_j$ such that $v_0=q'_1v_{i_1}+\ldots+q'_sv_{i_s}$. If some
$q'_j\geqslant 1$, consider another vector $v_0'=v_0-\lfloor q'_1\rfloor v_{i_1}-\dots-\lfloor
q'_s\rfloor v_{i_s}$ instead of $v_0$. It is easy to see that it also belongs to $\ZZ(v_1,\dots,
v_r)$ and to $\Qg0(v_1,\dots, v_r)$, and does not belong to
$\ZZ_{\geqslant 0}(v_1,\dots, v_r)$. However all the coefficients of the new $\QQ_{\geqslant
0}$-combination belong to the semiopen interval $[0,1)$. This means that $\{v_0'; v_1,\dots,
v_r\}$ is an~ENSS.
\end{proof}

Let $v_0, v_1, \dots, v_r$ be some vectors in the vector space~$\QQ^n$, and let $f$ be a linear function on~$\QQ^n$. We call $f$ a {\it discriminating linear function} for the collection 
$\{v_0; v_1, \dots, v_r\}$ if the value $f(v_0)$ cannot be represented as a linear combination of values $f(v_1),\ldots$, $f(v_r)$ with nonnegative integer coefficients. 
If it is known that $v_0$ belongs to $\ZZ (v_1, v_2, \dots, v_r) \cap \Qg0 (v_1, v_2, \dots, v_r)$ and that it can be represented as a
$\Qg0$-combination of linearly independent vectors $v_1, \dots, v_r$ with coefficients from the semiopen interval~$[0,1)$, then the existence of a discriminating function guarantees 
that $\{v_0; v_1, \dots, v_r\}$ is an~ENSS.

\subsection{Unimodular and almost unimodular sets}
Assume that a set of vectors $M\subset \QQ^n$ has rank $d$, $d\leqslant n$, and $L=\langle v \,|\, v\in M \rangle$ is the linear span
of vectors from~$M$. The set $M$ is called \textit{unimodular} if for every linearly independent vectors
$v_1,\ldots,v_d\in M$ the value of the $d$-dimensional volume $\vol_d (v_1,v_2,\ldots,v_d)$ has constant absolute value. 
If one fixes a basis in~$L$, then the condition above is equivalent to the fact that absolute values of all nonzero determinants 
$|\det (v_1,v_2,\ldots,v_d)|$, $v_1,v_2,\ldots,v_d\in M$, computed in this basis are equal.

If the set $M$ is unimodular and $M_1\subseteq M$ is a subset, then the intersection of~$M$ with the subspace $L_1\subset L$, 
$L_1=\langle v \,|\, v\in M_1 \rangle$, is also unimodular. It can be easily seen after choosing a basis in~$L$ compatible with~$L_1$.

The next theorem is used in many proofs.

\begin{theo}[{\cite[Thm.~3.5]{Stu}}]\label{lemmaunimodsverhnas}
Any unimodular set of vectors $M$ is hereditarily normal.
\end{theo}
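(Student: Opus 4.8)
The plan is to reduce the assertion to a single claim about one set and then prove that claim. \emph{Hereditary normality} of $M$ means that every subset $M_1\subseteq M$ is saturated, so the first step is to observe that it suffices to prove that \emph{any} unimodular set is saturated. Indeed, given $M_1\subseteq M$, put $L_1=\langle v \,|\, v\in M_1\rangle$. By the remark made just before the theorem, the intersection $M\cap L_1$ is again unimodular; since $M_1\subseteq M\cap L_1$ and $M_1$ spans $L_1$, every linearly independent subset of $M_1$ of maximal size is also such a subset of $M\cap L_1$, hence has the common $\vol$. Thus $M_1$ is itself unimodular, and once unimodular sets are known to be saturated we are done. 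So from now on I would take $M=\{v_1,\dots,v_r\}$ unimodular of rank $d$, write $N=\ZZ(v_1,\dots,v_r)$ and $L=\langle v_1,\dots,v_r\rangle$, and aim to show $N\cap\Qg0(M)\subseteq\ZZ_{\geqslant 0}(M)$, the reverse inclusion being trivial.

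The heart of the argument is the claim that every linearly independent $d$-element subset $B=\{u_1,\dots,u_d\}\subseteq M$ is in fact a $\ZZ$-basis of the lattice $N$. To prove this I would fix a basis of $L$, so that $d$-tuples have well-defined determinants, and let $V$ denote the common absolute value $|\det|$ of any linearly independent $d$-subset of $M$ guaranteed by unimodularity. For an arbitrary $w\in M$, writing $w=\sum_{j=1}^d c_j u_j$ and solving by Cramer's rule gives $c_j=\det(u_1,\dots,u_{j-1},w,u_{j+1},\dots,u_d)/\det(u_1,\dots,u_d)$. The denominator has absolute value $V$, while the numerator is either $0$ (when $w$ lies in the span of the remaining $u_i$) or the determinant of a linearly independent $d$-subset of $M$, hence of absolute value $V$. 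Therefore each $c_j\in\{0,\pm 1\}\subseteq\ZZ$, so $w\in\ZZ(B)$. As $w\in M$ was arbitrary, $N=\ZZ(M)\subseteq\ZZ(B)\subseteq N$, whence $\ZZ(B)=N$ and $B$, being linearly independent, is a basis of $N$.

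With this in hand the saturation is quick. Given $v\in N\cap\Qg0(M)$, Lemma~\ref{l.oln}(iii) yields a linearly independent subset $\{u_1,\dots,u_s\}\subseteq M$ and nonnegative rationals $q_1,\dots,q_s$ with $v=\sum_{j=1}^s q_j u_j$. I would extend this family to a linearly independent $d$-subset $B=\{u_1,\dots,u_d\}\subseteq M$, which is possible since $M$ spans $L$, and which by the previous paragraph is a $\ZZ$-basis of $N$. Since $v\in N$ it has integer coordinates in $B$; comparing these with the displayed $\Qg0$-combination and using uniqueness of coordinates in a basis forces $q_j\in\ZZ_{\geqslant 0}$ for $j\leqslant s$ and the remaining coordinates to vanish. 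Hence $v\in\ZZ_{\geqslant 0}(M)$, so $M$ is saturated; together with the first paragraph this gives hereditary normality.

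The only genuinely delicate point is the basis claim of the second paragraph: one must resist the temptation to argue that a generating set of a lattice contains a lattice basis, which is false in general (for instance $\{2,3\}$ generates $\ZZ$ yet contains no basis of it). It is precisely unimodularity, funneled through Cramer's rule to pin the coordinates into $\{0,\pm 1\}$, that makes every maximal linearly independent subset a basis; everything else is bookkeeping built on the already-established Lemma~\ref{l.oln}(iii) and on the remark that subsets of unimodular sets remain unimodular.
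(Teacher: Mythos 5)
Your proof is correct, but there is no proof in the paper to compare it with: the author does not prove this theorem at all, she imports it as a black box from Sturmfels (\cite[Thm.~3.5]{Stu}). So your proposal supplies an argument the paper deliberately outsources, and every step of it checks out. The reduction to saturation of unimodular sets is legitimate: your observation that $M_1$ inherits unimodularity from $M\cap L_1$ is right, because the maximal linearly independent subsets of $M_1$ are exactly maximal ones in $M\cap L_1$. The Cramer's-rule step correctly pins the coordinates of any $w\in M$ in a maximal linearly independent subset $B\subseteq M$ to $\{0,\pm 1\}$, so $B$ is a $\ZZ$-basis of $\ZZ(M)$, and you rightly flag that this (and not a generic ``generating sets contain bases'' fallacy) is where unimodularity enters. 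Combining this with Lemma~\ref{l.oln}\,(iii) and uniqueness of coordinates in a linearly independent family then gives saturation. It is worth noting that your key claim is precisely the unimodular specialization of the machinery the paper later develops for \emph{almost} unimodular sets: Lemma~\ref{kramer} is your Cramer step with coefficient set $\{\pm 1/a,\pm a_1/a,\ldots,\pm a_k/a\}$ in place of $\{0,\pm 1\}$, Corollary~\ref{razlozhenie} is your basis claim restricted to primitive subsets, and Lemma~\ref{nemin} plays the role of your final comparison of the $\Qg0$-combination with the integral one. In a unimodular set every maximal linearly independent subset is primitive, which is exactly why your argument closes cleanly, whereas in the almost unimodular case the paper must supplement these lemmas with the case analysis of Lemmas~\ref{12unimod}--\ref{resultm2m}. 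Your route is elementary and self-contained lattice linear algebra, whereas the cited source obtains the statement inside the broader theory of toric ideals; for the purposes of this paper the two are interchangeable.
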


We say that a subset $M\subset \QQ^n$ of rank $d$ is {\it almost unimodular} if we can choose a subset $\{v_1,v_2,\ldots,v_d\}\subseteq M$ such that in some basis of the space $\langle M \rangle$
$$
\det(v_1,v_2,\ldots,v_d)=m,
$$
and for every other vector $v'\in M$ and for each~$i$ the value
$$
\det(v_1,v_2,\ldots,\widehat{v_i},\ldots,v_d,v')
$$  
equals $km$ for some $k\in \ZZ$. 
The value $m=\det(v_1,v_2,\ldots,v_d)$ is called the {\it volume} of the almost unimodular subset. 
By a {\it primitive subset} $\{v_1,\ldots,v_d\}$ of an almost unimodular set of volume~$m$ we mean a subset such that its determinant 
equals~$\pm m$. The property that the set is almost unimodular and its primitive subsets do not depend on the choice of basis 
in~$\langle M \rangle$.

\begin{lemma}\label{kramer}
Consider an almost unimodular set $M$ such that all determinants in $M$ are contained in the set $m\cdot\{1,a_1,\ldots,a_k\}$ and for some vectors $w_1,\ldots,w_d\in M$ the value $\det(w_1,\ldots,w_d)$ equals $am$. If we decompose a vector $w\in M$ in the basis $w_1,\ldots,w_d$, then the coefficients belong to the set $\{\pm 1/a, \pm a_1/a,\ldots,\pm a_k/a\}$.
\end{lemma}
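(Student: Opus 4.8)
The plan is to apply Cramer's rule to the linear system that expresses $w$ in the basis $w_1, \dots, w_d$. First I would observe that $\{w_1, \dots, w_d\}$ is indeed a basis of the linear span $\langle M \rangle$: by hypothesis $\det(w_1, \dots, w_d) = am$ is a genuine determinant occurring in $M$, hence nonzero, so $a \neq 0$, $m \neq 0$, and the $d$ vectors $w_1,\dots,w_d$ are linearly independent and span the $d$-dimensional space $\langle M \rangle$. Since $w \in M \subseteq \langle M \rangle$, the decomposition $w = c_1 w_1 + \dots + c_d w_d$ exists and is unique.

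Next I would write down Cramer's rule for the coefficients. For each $j$,
$$
c_j = \frac{\det(w_1, \dots, w_{j-1}, w, w_{j+1}, \dots, w_d)}{\det(w_1, \dots, w_d)},
$$
where in the numerator the vector $w$ occupies the $j$-th slot. The denominator equals $am$ by assumption. The key observation is that the numerator is itself the determinant of a $d$-tuple of vectors all drawn from $M$, namely $w_1, \dots, w_d$ with $w_j$ replaced by $w \in M$. Therefore its value is one of the determinants occurring in $M$, and by hypothesis every such determinant has absolute value in $m \cdot \{1, a_1, \dots, a_k\}$; taking the sign of the determinant into account (it depends only on the ordering of the tuple), the numerator equals $\pm m b$ for some $b \in \{1, a_1, \dots, a_k\}$.

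Dividing, I would conclude $c_j = \pm m b / (am) = \pm b / a$, so that each coefficient lies in $\{\pm 1/a, \pm a_1/a, \dots, \pm a_k/a\}$, as claimed. The argument is essentially a one-line consequence of Cramer's rule, so I do not expect a serious obstacle; the only points requiring care are the verification that each numerator is genuinely a determinant of vectors from $M$ (so that the almost-unimodularity hypothesis applies to it) and the bookkeeping of degenerate coefficients: if $w$ is linearly dependent on some of the $w_i$, a numerator may vanish and the corresponding coefficient is $0$, which I would record as an additional admissible value alongside the nonzero ones listed.
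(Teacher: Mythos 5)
Your proof is correct and follows essentially the same route as the paper's: a direct application of Cramer's rule, noting that each numerator $\det(w_1,\ldots,w_{j-1},w,w_{j+1},\ldots,w_d)$ is itself a determinant of vectors from $M$ and hence lies in $m\cdot\{\pm 1,\pm a_1,\ldots,\pm a_k\}$. Your extra remark about vanishing numerators (coefficient $0$ when $w$ is dependent on the other $w_i$) is a fair point of care that the paper's one-line proof glosses over, but it does not change the argument.
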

\begin{proof}
Let us expand a vector $w\in M$ in the basis $(w_1,w_2,\ldots,w_d)$. By Cramer's formulae, it has 
the following coordinates:
$$
\text{if }w=b_1w_1+\ldots+b_dw_d,\;
\text{then }b_i=\frac{\det(w_1,w_2,\ldots,\widehat{w_i},w,\ldots,w_d)}{\det(w_1,w_2,\ldots,w_d)}, 
$$
i.e. all the $b_i$s have the form $\{\pm 1/a, \pm a_1/a,\ldots,\pm a_k/a\}$.
\end{proof}

\begin{corollary}\label{razlozhenie}
For every primitive subset $v_1,\ldots,v_d\subseteq M$, the set $M$ belongs to $\ZZ(v_1,\ldots,v_d)$.
\end{corollary}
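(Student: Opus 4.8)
The plan is to derive the statement directly from Lemma~\ref{kramer}. Given a primitive subset $\{v_1,\ldots,v_d\}\subseteq M$, its determinant equals $\pm m$ by definition, so in the notation of Lemma~\ref{kramer} this subset realizes the value $a=\pm 1$. Expanding an arbitrary $w\in M$ in the basis $(v_1,\ldots,v_d)$, that lemma guarantees that the coefficients lie in $\{\pm 1/a,\pm a_1/a,\ldots,\pm a_k/a\}=\{\pm 1,\pm a_1,\ldots,\pm a_k\}$, where $m\cdot\{1,a_1,\ldots,a_k\}$ is the set of all $d$-dimensional determinants occurring in $M$. Thus I would reduce everything to showing that each $a_i$ is an integer --- equivalently, that every determinant $\det(u_1,\ldots,u_d)$ with $u_1,\ldots,u_d\in M$ is an integer multiple of the volume~$m$.

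To establish this integrality I would return to the subset $\{v_1',\ldots,v_d'\}$ singled out in the definition of an almost unimodular set, for which $\det(v_1',\ldots,v_d')=m$ and $\det(v_1',\ldots,\widehat{v_i'},\ldots,v_d',v')\in m\ZZ$ for every $v'\in M$ and each~$i$. By Cramer's formulae the coordinates of any such $v'$ in the basis $(v_1',\ldots,v_d')$ are of the form $\pm km/m=\pm k\in\ZZ$, so $v'\in\ZZ(v_1',\ldots,v_d')$; since $v'$ is arbitrary, $M\subseteq\ZZ(v_1',\ldots,v_d')$, and hence $(v_1',\ldots,v_d')$ is a $\ZZ$-basis of the lattice $\ZZ(M)$. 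Every vector of $M$ then has integer coordinates in this basis, so any determinant $\det(u_1,\ldots,u_d)$ in the original coordinates equals an integer times $\det(v_1',\ldots,v_d')=m$. This shows that all the $a_i$ are integers.

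Feeding $a_i\in\ZZ$ and $a=\pm 1$ back into the first step then finishes the argument: every $w\in M$ acquires integer coordinates in the basis $(v_1,\ldots,v_d)$, that is, $M\subseteq\ZZ(v_1,\ldots,v_d)$. The hard part is precisely the integrality of the $a_i$, which amounts to the assertion that $M$ is contained in the lattice spanned by one primitive subset and hence, by the basis-independence remark preceding the lemma, by any of them; once this is settled, the special case $a=\pm 1$ of Lemma~\ref{kramer} closes the corollary immediately.
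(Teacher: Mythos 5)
Your proof is correct, and its core mechanism is the same one the paper relies on: Cramer's formulae, packaged as Lemma~\ref{kramer}, applied to a primitive subset, where $a=\pm 1$. The difference is in the logical scaffolding. The paper states Corollary~\ref{razlozhenie} with no proof at all, as if it dropped out of Lemma~\ref{kramer} alone; but that lemma only places the coefficients in $\{\pm 1,\pm a_1,\ldots,\pm a_k\}$, which gives integrality only if one already knows that every $d\times d$ determinant formed by vectors of $M$ is an integer multiple of~$m$. In the paper that integrality statement is Corollary~\ref{razloz}, which appears \emph{after} the present corollary, and whose proof in turn rests on the fact that vectors of $M$ decompose integrally in the distinguished subset from the definition of an almost unimodular set. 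You untangle this ordering: you first prove the special case of the corollary for the distinguished subset directly from the definition plus Cramer, deduce from it that all determinants lie in $m\ZZ$ (this is exactly Corollary~\ref{razloz}, established by the same matrix-factorization argument the paper uses there), and only then invoke Lemma~\ref{kramer} with $a=\pm 1$ for an arbitrary primitive subset. What your route buys is a self-contained, non-circular derivation of the corollary as stated; what the paper's terser ordering buys is brevity, since in all of its applications (sets whose determinants are $\pm m$ and $\pm 2m$, say) the integrality of the ratios $a_i$ is verified by direct inspection, so there the corollary really is an immediate consequence of Lemma~\ref{kramer}.
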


\begin{corollary}\label{razloz}
In an almost unimodular set~$M$ of volume~$m$ and rank~$d$, the values of all the determinants have the form~$km$, $k\in \ZZ$. 
\end{corollary}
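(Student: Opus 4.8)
The plan is to reduce the statement to Corollary~\ref{razlozhenie} together with the multiplicativity of the determinant. First I would observe that the distinguished subset $\{v_1,\dots,v_d\}$ realizing the volume $m=\det(v_1,\dots,v_d)$ is itself a \emph{primitive} subset, since its determinant equals $\pm m$ by the very definition of primitivity. Moreover, because $M$ has rank $d$, the nonvanishing of $\det(v_1,\dots,v_d)$ forces $v_1,\dots,v_d$ to be linearly independent and hence to form a basis of the linear span $\langle M\rangle$. This is the geometric input that lets me switch to coordinates adapted to the reference subset.

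Next I would apply Corollary~\ref{razlozhenie} to this primitive subset to conclude that $M\subseteq \ZZ(v_1,\dots,v_d)$, that is, every vector of $M$ has integer coordinates with respect to the basis $v_1,\dots,v_d$. Then, given any $d$ vectors $w_1,\dots,w_d\in M$ whose determinant we wish to control, I would write $w_j=\sum_{i=1}^d a_{ij}v_i$ with all $a_{ij}\in\ZZ$ and invoke the product rule for determinants, namely $\det(w_1,\dots,w_d)=\det(a_{ij})\cdot\det(v_1,\dots,v_d)=\det(a_{ij})\cdot m$. Since $(a_{ij})$ is an integer matrix, the scalar $k:=\det(a_{ij})$ lies in $\ZZ$, and therefore the determinant $\det(w_1,\dots,w_d)$ has the required form $km$.

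There is essentially no serious obstacle in this argument; the one point deserving a word of care is that all determinants are computed with respect to a single fixed basis of $\langle M\rangle$. As noted in the discussion preceding Lemma~\ref{kramer}, replacing that basis by another multiplies every determinant by a common nonzero scalar, so the property of being an integer multiple of $m$ is basis-independent. Working throughout in the basis in which $\det(v_1,\dots,v_d)=m$ — or, equivalently, relying on the multiplicativity argument above, which never referred to the fixed basis except through the ratio of two determinants — removes any ambiguity. Thus the corollary follows at once from the integrality of the coordinate change supplied by Corollary~\ref{razlozhenie}.
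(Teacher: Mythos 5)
Your proof is correct and is essentially the paper's own argument: the paper likewise takes the distinguished primitive subset $(v_1,\dots,v_d)$, uses Corollary~\ref{razlozhenie} to write each $w_j \in M$ with integer coordinates in that basis, and concludes via multiplicativity that $\det(w_1,\dots,w_d)=\det A\cdot m$ with $\det A\in\ZZ$. Your remarks on linear independence and basis-independence only make explicit what the paper leaves implicit.
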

\begin{proof}
Let $w_1,\ldots,w_d\in M$. We have: $\det(w_1,\ldots,w_d)=\det A\cdot\det (v_1,\ldots,v_d)$, where~$A$ is an integer matrix 
expressing the vectors $w_1,\ldots,w_d$ in the basis $(v_1,v_2,\ldots,v_d)$. Since $\det A\in \ZZ$, the value $\det(w_1,\ldots,w_d)$ has the desired form.
\end{proof}

This gives an equivalent definition of an almost unimodular set: it is a set in which all the determinants are divisible by some~$m$, and 
there exists a determinant which equals exactly~$m$.

\begin{example}
Consider the set $M$ containing $16$ points $\{(\pm1,\pm1,\pm
1,\pm1)\}$. It is easy to see that determinants of all $4$-tuples equal $0$,
$8$, or $16$. This means that $M$ is almost unimodular.
\end{example}

\begin{lemma}\label{nemin} 
Suppose that an almost unimodular set $M$ of rank $d$ and of volume $m$ is not hereditarily normal, and $\{v_0; v_1,\ldots,v_r\}$ is an ENSS. 
Assume that the corresponding
$\Qg0$-combination for $v_0$ involves only the linearly independent vectors $v_1,\ldots,v_{d'}$.
\begin{enumerate}
\item[(i)]
If $d' = \dim \langle v_1,\ldots,v_{d'} \rangle = d$, then $|\vol_d ( v_1,\ldots,v_d)| \ne m$. 
 \item[(ii)]
If $d' < d$, then for any vectors $w_{d'+1}, \ldots, w_d \in M$ linearly independent with $ v_1,\ldots,v_{d'}$ one has $|\vol_d ( v_1,\ldots,v_{d'}, w_{d'+1}, \ldots, w_d )| \ne m$.
 \end{enumerate}
\end{lemma}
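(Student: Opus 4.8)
The plan is to prove both parts by contradiction, in each case assuming that the $d$-dimensional volume in question equals $m$ and deriving a contradiction from the structure of the ENSS. The key observation is that if $|\vol_d(\cdot)| = m$ for a $d$-element subset of $M$, then by the definition of the volume of an almost unimodular set this subset is \emph{primitive}, so Corollary~\ref{razlozhenie} applies and the whole set $M$ lies in its integer span. Throughout I use that $\{v_1,\ldots,v_r\}\subseteq M$ and that, by the definition of an ENSS, $v_0\in\ZZ(v_1,\ldots,v_r)\cap\Qg0(v_1,\ldots,v_r)$ while $v_0\notin\ZZ_{\geqslant 0}(v_1,\ldots,v_r)$.

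For part (i), I would suppose $|\vol_d(v_1,\ldots,v_d)| = m$. Then $\{v_1,\ldots,v_d\}$ is primitive, and Corollary~\ref{razlozhenie} gives $M\subseteq\ZZ(v_1,\ldots,v_d)$; since $\{v_1,\ldots,v_r\}\subseteq M$, this yields $v_0\in\ZZ(v_1,\ldots,v_r)\subseteq\ZZ(v_1,\ldots,v_d)$, so $v_0$ has integer coordinates in the basis $(v_1,\ldots,v_d)$. On the other hand, the defining $\Qg0$-combination writes $v_0 = q_1 v_1 + \ldots + q_d v_d$ with every $q_i\in[0,1)$. By uniqueness of coordinates in a basis, each $q_i$ is simultaneously an integer and lies in $[0,1)$, forcing $q_i = 0$ and hence $v_0 = 0$. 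But $0\in\ZZ_{\geqslant 0}(v_1,\ldots,v_r)$ via the all-zero combination, contradicting $v_0\notin\ZZ_{\geqslant 0}(v_1,\ldots,v_r)$.

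For part (ii), I would run the same argument after completing $v_1,\ldots,v_{d'}$ to a $d$-element subset. Suppose $|\vol_d(v_1,\ldots,v_{d'},w_{d'+1},\ldots,w_d)| = m$ for some linearly independent completion from $M$. Then this $d$-subset is primitive, so $M\subseteq\ZZ(v_1,\ldots,v_{d'},w_{d'+1},\ldots,w_d)$ by Corollary~\ref{razlozhenie}, and again $v_0$ has integer coordinates in this basis. Since the combination for $v_0$ involves only $v_1,\ldots,v_{d'}$, these coordinates are $(q_1,\ldots,q_{d'},0,\ldots,0)$ with $q_i\in[0,1)$; integrality forces every $q_i = 0$, so $v_0 = 0$, the same contradiction.

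The argument is short once Corollary~\ref{razlozhenie} is in hand, so I do not expect a serious obstacle; the work is really in setting up the almost unimodular machinery, which is already available. The one point that needs genuine care is the final step: one must remember that $v_0 = 0$ is not permitted, because the zero vector always lies in $\ZZ_{\geqslant 0}(M)$, and it is precisely the condition $v_0\notin\ZZ_{\geqslant 0}(v_1,\ldots,v_r)$ built into the notion of ENSS that rules it out. It is also worth double-checking that the linear independence of the combination and the $[0,1)$ range of its coefficients — both recorded in the definition of ENSS — are exactly what make the uniqueness-of-coordinates step valid.
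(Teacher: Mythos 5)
Your proof is correct and takes essentially the same route as the paper: both argue by contradiction, using Corollary~\ref{razlozhenie} to conclude that $v_0$ has integer coordinates in the chosen primitive basis, then invoking uniqueness of coordinates to identify these integers with the coefficients of the ENSS combination (with the completion vectors $w_{d'+1},\ldots,w_d$ carrying zero coefficients in part (ii)). The only cosmetic difference is the last step: you use the $[0,1)$ bound to force $v_0=0$ and then note $0\in\ZZ_{\geqslant 0}(v_1,\ldots,v_r)$, while the paper observes directly that the nonnegative rational coefficients, being integers, exhibit $v_0$ as a $\ZZ_{\geqslant 0}$-combination; both reach the same contradiction with the definition of an ENSS.
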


\begin{proof}
(i)
If $|\vol_d ( v_1,\ldots,v_{d'})| = m$, then by Corollary~$\ref{razlozhenie}$ the vector $v_0$ decomposes with integer coefficients in the basis $v_1,\ldots,v_d$.
Since $v_1,\ldots,v_d$ are linearly independent, this $\ZZ$-combination coincides with the initial $\Qg0$-combination, a contradiction. 

(ii)
We may suppose that vectors $w_{d'+1}, \ldots, w_d$ occur in the initial $\Qg0$-combination for $v_0$ with zero coefficients, and then use the reasoning of
the previous part.
\end{proof}
\subsection{The ratio of determinants is two}
In this section we consider an almost unimodular set~$M$ of volume~$m$ such that all its nonzero determinants equal~$\pm m$ or~$\pm 2m$.
\begin{lemma}\label{12unimod}
Consider an almost unimodular set~$M$ of rank~$d$ such that all its nonzero determinants equal~$\pm m$ or~$\pm 2m$, and 
suppose that~$M$ is not hereditarily normal. Let $\{v_0; v_1,\ldots,v_r\}$ be the corresponding ENSS, and let 
$v_0=q_1v_1+\ldots+q_lv_l$ be the corresponding $\Qg0$-combination. Then all $q_i\in \{0,1/2\}$.
\end{lemma}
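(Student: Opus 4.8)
The plan is to read off each $q_i$ as a coordinate of~$v_0$ in a suitable basis of $\langle M\rangle$ drawn from~$M$, to express that coordinate by Cramer's rule as a quotient of two determinants of $d$-tuples from~$M$, and then to combine the fact that every nonzero such determinant is $\pm m$ or $\pm 2m$ with the restriction $q_i\in[0,1)$ coming from the ENSS.

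First I record what the ENSS provides: the vectors $v_1,\ldots,v_l$ occurring in $v_0=q_1v_1+\ldots+q_lv_l$ are linearly independent and all $q_i$ lie in $[0,1)$. Since $\rank M=d$, I extend $v_1,\ldots,v_l$ to a basis $(v_1,\ldots,v_l,w_{l+1},\ldots,w_d)$ of $\langle M\rangle$ with $w_{l+1},\ldots,w_d\in M$. By uniqueness of the coordinate expansion, the coordinates of~$v_0$ in this basis are precisely $q_1,\ldots,q_l,0,\ldots,0$.

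The decisive step is to show that this basis has volume~$2m$ and not~$m$, which is exactly what Lemma~\ref{nemin} delivers. If $l=d$, part~(i) gives $|\det(v_1,\ldots,v_d)|\ne m$; if $l<d$, part~(ii) applied to $w_{l+1},\ldots,w_d$ gives $|\det(v_1,\ldots,v_l,w_{l+1},\ldots,w_d)|\ne m$. As all nonzero determinants in~$M$ equal $\pm m$ or $\pm 2m$, the basis determinant equals $\pm 2m$ in either case.

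Finally I apply Lemma~\ref{kramer} with determinant set $m\cdot\{1,2\}$ and the chosen basis of volume $2m$ (so $a=2$): each coordinate of~$v_0$ is a quotient of a determinant lying in $\{0,\pm m,\pm 2m\}$ by $\pm 2m$, hence belongs to $\{0,\pm 1/2,\pm 1\}$. Since these coordinates are the $q_i$ and $q_i\in[0,1)$, only $0$ and $1/2$ remain, as claimed. The only delicate point is the use of Lemma~\ref{nemin}: it is precisely what excludes a denominator $\pm m$, which would force every $q_i$ to be an integer and thus to vanish on $[0,1)$, contradicting $v_0\ne 0$.
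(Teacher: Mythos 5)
Your overall strategy coincides with the paper's: complete $v_1,\ldots,v_l$ to a basis of $\langle M\rangle$ by vectors of $M$, invoke Lemma~\ref{nemin} to rule out volume $\pm m$ for that basis (hence it has volume $\pm 2m$), and read off the $q_i$ as coordinates of $v_0$ in this basis. Your case split between $l=d$ (part (i)) and $l<d$ (part (ii)) is correct, and in fact more explicit than the paper, which cites only part (i).

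The final step, however, has a genuine gap. Lemma~\ref{kramer} is a statement about decomposing a vector $w\in M$: its proof works because the Cramer numerator $\det(v_1,\ldots,\widehat{v_i},w,\ldots,w_d)$ is then a determinant of $d$ vectors of $M$, hence lies in $\{0,\pm m,\pm 2m\}$. You apply it to $v_0$, which need not belong to $M$, so the numerator $\det(v_1,\ldots,\widehat{v_i},v_0,\ldots,w_d)$ is not covered by the hypothesis on determinants of $M$, and your claim that it lies in $\{0,\pm m,\pm 2m\}$ is unjustified. The symptom of the gap is that your argument never uses the ENSS requirement $v_0\in\ZZ(v_1,\ldots,v_r)$, without which the conclusion is false; indeed your reasoning proves too much: the vector $v_0'=\frac13 v_1$ is also a $\Qg0$-combination of linearly independent vectors of $M$ with coefficients in $[0,1)$, its Cramer numerator in the same volume-$2m$ basis equals $\pm\frac{2m}{3}\notin\{0,\pm m,\pm 2m\}$, and your final step applied to it would yield $\frac13\in\{0,\pm\frac12,\pm1\}$. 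The repair is short and is exactly what the paper's citation of ``the definition of an ENSS'' supplies: apply Lemma~\ref{kramer} legitimately, to the vectors of $M$, to conclude that every element of $M$ has coordinates in $\{0,\pm\frac12,\pm1\}$ in your basis; since $v_0\in\ZZ(v_1,\ldots,v_r)\subseteq\ZZ(M)$, its coordinates then lie in $\frac12\ZZ$ (equivalently, expand your Cramer numerator by multilinearity over an integer representation $v_0=\sum_j n_j u_j$ with $u_j\in M$ to see that it lies in $m\ZZ$). Combined with $q_i\in[0,1)$, this gives $q_i\in\{0,\frac12\}$.
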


\begin{proof}
Denote by $d'$ the rank of the set $\{v_1,\ldots, v_r\}$. %
Complete $\{v_1,\ldots,v_l\}$ to a basis of the space~$\langle M \rangle$. Now the statement follows from Lemmas~\ref{nemin}\,(i), 
\ref{kramer}, and the definition of an ENNS. 
\end{proof}

In the next three lemmas, we fix a basis $(\bar v_1$, $\bar v_2,\ldots$, $\bar v_d)$ of volume~$2m$. 
By Lemma~\ref{kramer}, the other vectors of~$M$ will be decomposed in this basis with coefficients~$0$, $\pm 1/2$, and~$\pm 1$. 
For every vector~$v \in M$, denote by $S(v)$ the set of indices corresponding to coordinates~$\pm 1/2$. 

\begin{lemma}\label{proSv}
If $S(v_1)\neq\emptyset$ and $S(v_2)\neq\emptyset$, then $S(v_1)=S(v_2)$.
\end{lemma}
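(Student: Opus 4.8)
The plan is to work in the fixed basis $(\bar v_1,\ldots,\bar v_d)$ of volume $2m$ and to squeeze constraints on the coordinates of $v_1$ and $v_2$ out of $2\times 2$ minors. Write $v_1=\sum_i\alpha_i\bar v_i$ and $v_2=\sum_i\beta_i\bar v_i$; by Lemma~\ref{kramer} every $\alpha_i,\beta_i$ lies in $\{0,\pm 1/2,\pm 1\}$, and by definition $S(v_1)=\{i:\alpha_i=\pm 1/2\}$ and $S(v_2)=\{i:\beta_i=\pm 1/2\}$. The whole argument rests on the fact that replacing two distinct basis vectors by $v_1$ and $v_2$ yields a $d$-tuple of vectors of $M$, whose determinant is therefore forced into $\{0,\pm m,\pm 2m\}$.

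First I would carry out the determinant computation. For distinct indices $j\neq k$, form the tuple obtained from $(\bar v_1,\ldots,\bar v_d)$ by placing $v_1$ in slot $j$ and $v_2$ in slot $k$. Expanding by multilinearity, the only nonzero contributions come from the two ways of filling slots $j$ and $k$ with $\bar v_j$ and $\bar v_k$, so that
$$
\det(\bar v_1,\ldots,v_1,\ldots,v_2,\ldots,\bar v_d)=(\alpha_j\beta_k-\alpha_k\beta_j)\cdot 2m .
$$
As all the vectors involved lie in $M$, the left-hand side is in $\{0,\pm m,\pm 2m\}$, whence the minor satisfies $\alpha_j\beta_k-\alpha_k\beta_j\in\{0,\pm 1/2,\pm 1\}$.

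The heart of the proof, and the step I expect to require the real care, is a quarter-integrality argument. Suppose $j\in S(v_1)$ and $k\in S(v_2)$ with $j\neq k$, so that $\alpha_j\beta_k=\pm 1/4$. Writing $\alpha_k\beta_j=\alpha_j\beta_k-t$ with $t\in\{0,\pm 1/2,\pm 1\}$ and multiplying by $4$, one sees that $4\alpha_k\beta_j=\pm 1-4t$ is odd; since $\alpha_k\beta_j\in\{0,\pm 1/4,\pm 1/2,\pm 1\}$ gives $4\alpha_k\beta_j\in\{0,\pm 1,\pm 2,\pm 4\}$, the only possibility is $\alpha_k\beta_j=\pm 1/4$. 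Hence $\alpha_k,\beta_j\in\{\pm 1/2\}$, i.e. $k\in S(v_1)$ and $j\in S(v_2)$. In other words, whenever one index lies in $S(v_1)$ and a \emph{different} index lies in $S(v_2)$, both indices automatically lie in both sets.

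Finally I would upgrade this to equality. Pick any $a\in S(v_1)$ and, using $S(v_2)\neq\emptyset$, any $b\in S(v_2)$; if $a=b$ then $a\in S(v_2)$ trivially, while if $a\neq b$ the previous step yields $a\in S(v_2)$. Thus $S(v_1)\subseteq S(v_2)$, and the symmetric argument gives $S(v_2)\subseteq S(v_1)$, so $S(v_1)=S(v_2)$. The one genuinely delicate point is the quarter-integrality deduction; the determinant expansion and the final inclusion chase are routine bookkeeping.
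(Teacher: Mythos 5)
Your proof is correct and takes essentially the same approach as the paper: both substitute $v_1$ and $v_2$ into two slots of the volume-$2m$ basis, reduce the resulting determinant to the $2\times 2$ minor $(\alpha_j\beta_k-\alpha_k\beta_j)\cdot 2m$, and use the constraint that all determinants of $d$-tuples from $M$ lie in $\{0,\pm m,\pm 2m\}$ to force the cross-products to be $\pm 1/4$. The paper phrases this as a one-line contradiction (choosing $i\in S(v_1)\setminus S(v_2)$ makes the determinant land in $\{\pm m/2,\pm 3m/2,\pm 5m/2\}$), while you phrase it as a direct implication followed by an inclusion chase; the difference is purely organizational.
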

\begin{proof}
Suppose that $S(v_1)\neq S(v_2)$ and $\# S(v_1)\geqslant\# S(v_2)$. Choose indices $i\in S(v_1)\setminus S(v_2)$ and $j\in S(v_2)$, $j\neq i$. 
An easy check shows that  
$$
\det (\bar v_1,\bar v_2,\ldots,\widehat{\bar v_i},\ldots,\widehat{\bar v_j},\ldots,\bar v_d, v_1, v_2)\in \left\{\pm \frac m2, \pm \frac{3m}2, \pm \frac{5m}2\right\},
$$ 
a contradiction.
\end{proof} 

\begin{lemma}\label{peremesh}
Suppose that a finite group~$W$ acts by permutations on a set~$M$ and linearly in~$\langle M \rangle$ in such a way that for every basis  
$(\bar v_1$, $\bar v_2,\ldots$, $\bar v_d)$ of volume~$2m$ and two indices $i$, $j$ there exists a $w\in W$, permuting the lines 
$\langle \bar v_1 \rangle$, $\langle \bar v_2 \rangle, \ldots$, $\langle \bar v_d \rangle$, and interchanging $\langle \bar v_i \rangle$ 
and $\langle \bar v_j \rangle$. Then for every $v\in M$, all its nonzero coordinates in the basis $(\bar v_1$, $\bar v_2,\ldots$, $\bar v_d)$ either are in the set $\{\pm 1\}$, or in the set $\{\pm 1/2\}$.
\end{lemma}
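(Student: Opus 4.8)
The plan is to rule out \emph{mixed} vectors, i.e. those having both a coordinate $\pm\frac12$ and a coordinate $\pm1$ in the basis $(\bar v_1,\ldots,\bar v_d)$, by transporting a $\pm1$-coordinate onto a $\pm\frac12$-coordinate with a suitable $w\in W$ and then invoking Lemma~\ref{proSv}. The preliminary and crucial step is to understand how an element $w$ of the required kind acts on the basis. Since $w$ permutes the lines $\langle\bar v_1\rangle,\ldots,\langle\bar v_d\rangle$, it induces a permutation $\sigma$ of the indices and acts by $w\bar v_l=\epsilon_l\bar v_{\sigma(l)}$ for scalars $\epsilon_l\in\QQ^{\times}$; moreover $w\bar v_l\in M$, because $W$ acts by permutations of $M$ and $\bar v_l\in M$.

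First I would prove that every $\epsilon_l=\pm1$, i.e. that the matrix of $w$ in this basis is a signed permutation matrix. One bound comes from volumes: replacing $\bar v_{\sigma(l)}$ by $w\bar v_l=\epsilon_l\bar v_{\sigma(l)}\in M$ scales that column by $\epsilon_l$, producing a $d$-tuple of vectors of $M$ whose determinant has absolute value $|\epsilon_l|\cdot 2m$; since every nonzero determinant in $M$ equals $\pm m$ or $\pm 2m$, this forces $|\epsilon_l|\in\{1/2,1\}$. For the opposite constraint, note that the matrix of $w$ in $(\bar v_1,\ldots,\bar v_d)$ is monomial with nonzero entries $\epsilon_1,\ldots,\epsilon_d$, so $\det w=\pm\prod_l\epsilon_l$; as $W$ is finite, $\det w$ is a real root of unity, hence $\det w=\pm1$ and $\prod_l|\epsilon_l|=1$. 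A product of numbers from $\{1/2,1\}$ that equals $1$ must have all factors equal to $1$, so $\epsilon_l=\pm1$ for every $l$. I expect this to be the main obstacle, since it is precisely here that the determinant hypothesis of the section (ratios $m$ versus $2m$) and the finiteness of $W$ are used together.

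With the scalars pinned down, the conclusion follows quickly. Suppose, for contradiction, that some $v\in M$ is mixed. By Lemma~\ref{proSv} all vectors of $M$ with nonempty $S(\cdot)$ share one and the same index set $S_0:=S(v)\neq\emptyset$. Choose $i\in S_0$ (a coordinate $\pm\frac12$) and an index $j\notin S_0$ at which $v$ has coordinate $\pm1$; then $i\neq j$. Take $w\in W$ interchanging $\langle\bar v_i\rangle$ and $\langle\bar v_j\rangle$, so that $\sigma(i)=j$. Then $wv\in M$, and its coordinate at position $j$ equals $\epsilon_i\cdot(\pm\frac12)=\pm\frac12$, whence $j\in S(wv)$. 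Thus $S(wv)\neq\emptyset$, so Lemma~\ref{proSv} gives $S(wv)=S_0$; but $j\in S(wv)$ while $j\notin S_0$, a contradiction. Hence no mixed vector exists: for every $v\in M$ either $S(v)=\emptyset$, so all its nonzero coordinates lie in $\{\pm1\}$, or $S(v)\neq\emptyset$ and all its nonzero coordinates lie in $\{\pm\tfrac12\}$, which is exactly the assertion.
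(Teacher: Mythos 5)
Your proposal is correct and follows essentially the same route as the paper: assume a mixed vector exists, transport a $\pm\tfrac12$-coordinate onto a $\pm1$-position with the hypothesized $w\in W$, and derive a contradiction with Lemma~\ref{proSv}. The only difference is that you explicitly verify the scalars $\epsilon_l$ are $\pm1$ (via the determinant bound and finiteness of $W$), a point the paper's proof leaves implicit; this is a welcome refinement rather than a different approach.
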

\begin{proof}
On the contrary, let~$v$ be such that it has~$\pm 1$ on the $i$th position, and $\pm 1/2$ on the $j$th position. 
Interchanging the lines $\langle \bar v_i \rangle$ and $\langle \bar v_j \rangle$, we obtain a vector $w(v)$ of $M$, whereas
 $S(v)\neq S(w(v))$, and it contradicts Lemma~\ref{proSv}. 
\end{proof}

\begin{lemma}\label{resultm2m}
Consider an almost unimodular set~$M$ with volumes~$m$ and~$2m$ such that a finite group~$W$ acts on it, and all the conditions of 
Lemma~\ref{peremesh} are held. Then~$M$ is hereditarily normal.
\end{lemma}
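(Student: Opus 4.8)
The plan is to argue by contradiction, in the spirit of the preceding lemmas. Suppose $M$ is not hereditarily normal; then some subset $N=\{v_1,\ldots,v_r\}\subseteq M$ is not saturated, and Lemma~\ref{lemma01} furnishes an ENSS $\{v_0;v_1,\ldots,v_r\}$. By Lemma~\ref{12unimod} the coefficients of the associated $\Qg0$-combination all lie in $\{0,1/2\}$; discarding the vanishing ones and renumbering, I may assume $v_0=\frac12(v_1+\ldots+v_{d'})$ with $v_1,\ldots,v_{d'}\in N$ linearly independent and $d'\geqslant 1$. Crucially, $v_0\in\ZZ(N)\setminus\ZZ_{\geqslant 0}(N)$.

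Next I would exhibit a basis of $\langle M\rangle$ of volume $2m$ whose first $d'$ members are $v_1,\ldots,v_{d'}$. As $M$ has rank $d$, complete $v_1,\ldots,v_{d'}$ to a basis $(\bar v_1,\ldots,\bar v_d)$ using vectors of $M$. By Lemma~\ref{nemin} (part (i) if $d'=d$, part (ii) otherwise) one has $|\vol_d(\bar v_1,\ldots,\bar v_d)|\ne m$, and since every nonzero determinant equals $\pm m$ or $\pm 2m$, this basis has volume $2m$. Lemma~\ref{peremesh} then applies: in this basis each $v\in M$ has all its nonzero coordinates in $\{\pm 1\}$ or all in $\{\pm 1/2\}$, partitioning $M$ into an integral part and a half-integral part. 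By Lemma~\ref{proSv} all half-integral vectors share a single support $S_0$, on which they take only the values $\pm 1/2$.

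The heart of the argument is to pin down $S_0$ and, above all, to find a half-integral vector lying in $N$ itself. For this I would reduce modulo the lattice $\ZZ^d$ spanned by $\bar v_1,\ldots,\bar v_d$: the assignment $x\mapsto 2x \bmod 2\ZZ^d$ is a homomorphism into $(\ZZ/2)^d$ that annihilates every integral vector and sends every half-integral vector to the indicator of $S_0$. Applying it to a witnessing relation $v_0=\sum_j n_j v_j$ (with $n_j\in\ZZ$, $v_j\in N$), the left side becomes the indicator of $\{1,\ldots,d'\}$, because $2v_0=\bar v_1+\ldots+\bar v_{d'}$, while the right side is a $(\ZZ/2)$-multiple of the indicator of $S_0$. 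Comparing the two forces $S_0=\{1,\ldots,d'\}$ and shows that the half-integral vectors appearing in the sum have odd total coefficient; in particular $N$ contains some half-integral vector $w$. Guaranteeing $w\in N$ rather than merely $w\in M$ is the main obstacle, and this parity count is precisely what overcomes it.

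To conclude, write such a $w$ as $w=\frac12\sum_{i=1}^{d'}\eta_i\bar v_i$ with $\eta_i\in\{\pm 1\}$, using that $w$ is supported on $S_0=\{1,\ldots,d'\}$ with entries $\pm 1/2$. Then $v_0-w=\sum_{i:\,\eta_i=-1}\bar v_i$ is a nonnegative integer combination of $v_1,\ldots,v_{d'}$, whence $v_0=w+\sum_{i:\,\eta_i=-1}v_i\in\ZZ_{\geqslant 0}(N)$. This contradicts the defining property of the ENSS, and therefore $M$ must be hereditarily normal.
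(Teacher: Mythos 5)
Your proof is correct and takes essentially the same route as the paper: a volume-$2m$ basis built from the semi-sum vectors via Lemmas~\ref{12unimod} and~\ref{nemin}, the coordinate structure from Lemmas~\ref{peremesh} and~\ref{proSv}, and the final step of reaching $v_0$ by adding basis vectors to a half-integral vector of the subset. Your reduction modulo $2\ZZ(\bar v_1,\ldots,\bar v_d)$ is an explicit rendering of the paper's observation that $\ZZ(M)$ has only two cosets modulo $\ZZ(\bar v_1,\ldots,\bar v_d)$ (where the paper instead invokes minimality of the ENSS, your parity count of half-integral vectors in the witnessing integer relation does the same job, and in fact pins down the support $S_0$ more carefully), so the substance coincides.
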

\begin{proof}
On the contrary, suppose that~$M$ is not hereditarily normal. Choose a minimal with respect to inclusion ENSS in~$M$. 
Without loss of generality assume that its rank equals the rank of~$M$, and also that the vectors involved in the $\Qg0$-combination constitute the first vectors of the basis of volume~$2m$. Let us show that our NSS consists of one vector of form 
$(\pm 1/2,\ldots,\pm 1/2,0,\ldots,0)$ and several basis vectors. The ENSS obviously contains the vectors which yield~$v_0$ as their semi-sum. Note also that $\ZZ(M)$ has only two cosets modulo the group 
$\ZZ(\bar v_1, \bar v_2,\ldots, \bar v_d)$, since it follows from Lemmas~\ref{proSv} and~\ref{peremesh} that all the vectors, 
having non-integer coordinates, must differ by an integer vector. We need to add at least one representative of the second class, 
and one is enough.

But, if we have a vector $v_{d+1}$ of the form $(\pm 1/2,\ldots,\pm 1/2,0,\ldots,0)$, we can easily obtain 
$v_0=(1/2,\ldots, 1/2,0,\ldots,0)$ by adding several $\bar v_i$s. A contradiction.
\end{proof}


\section{The root system~$B_n$}

The root system 
$B_n$, where $n \geqslant 2$, is formed by vectors $\{ \pm e_i \pm
e_j \, , \pm e_i \,|\, 1 \leqslant i,j \leqslant n,\, i\neq j\}$. 
With respect to the system of simple roots 
$$
\alpha_1=e_1-e_2,\,\alpha_2=e_2-e_3,\ldots,\,\alpha_{n-1}=e_{n-1}-e_n,\, \alpha_n=e_n
$$
the fundamental weights have the form
$$
\pi_1=e_1,\, \pi_2=e_1+e_2,\, \ldots, \,\pi_{n-1}=e_1+\ldots+e_{n-1},\, \pi_n=\frac 12 (e_1+\ldots+e_n).
$$
The root lattice $\Xi=\ZZ^n$. The weight lattice~$\Lambda$ has the form
$$
\Lambda=\{(\ell_1,
\ell_2, \dots, \ell_{n}) \;|\;  2\ell_i \in \ZZ, \ell_i - \ell_j \in \ZZ, i,j = 1,\ldots,n \}.
$$
The Weyl group $W$ acts by permutations on the set of coordinates and by sign change of an arbitrary set of coordinates.
A weight $\lambda=(\ell_1, \ell_2, \dots,
\ell_{n})$ is dominant if and only if ${\ell_1\geqslant  \ldots \geqslant \ell_n\geqslant 0}$.
If all coordinates of $\lambda$ are integers (or all together half-integers but not integers), then the set $M(\lambda)$ consists of all integer (or strictly half-integer, respectively) points in the polytope $P(\lambda)$. 

\subsection{Positive results}\label{sec1}\indent

\slu\label{line1} $\lambda=\pi_1=(1,0,\ldots,0)$.
Then $M(\lambda)=\{\pm e_i \, | \, 1\leqslant i \leqslant n\}$.
Obviously, this subset is unimodular, and by Theorem~$\ref{lemmaunimodsverhnas}$ it is hereditarily normal.

\slu\label{line9} $\lambda=2\pi_2=(1,1)$, $n = 2$. It is easy to check case-by-case that the set $\{ \pm e_1, \pm e_2, \pm e_1\pm e_2 \}$ is 
hereditarily normal.

\slu\label{sluchaj3} $\lambda =\pi_n= \left(\frac{1}{2}, \dots,
\frac{1}{2}\right)$, $2\leqslant n \leqslant4$. After multiplying by~2, we have
$$
M'(\lambda)=\Bigl\{(\underbrace{\pm1,\pm1,\ldots,\pm1}_{n\;\text{coordinates}})\Bigr\}.
$$

For $n=2,3$ the set $M'(\lambda)$ is unimodular, so by Theorem~$\ref{lemmaunimodsverhnas}$ it is hereditarily normal.

Now let $n=4$. The values of all nonzero determinants in~$M'(\lambda)$ equal $\pm 8$ and $\pm 16$. This means that $M'(\lambda)$
is almost unimodular. Let us find all $4$-tuples of vectors of~$M'(\lambda)$ such that their determinant equals~$16$. We may assume that the first vector in this $4$-tuple is~$(1,1,1,1)$. Using case-by-case
consideration, we see that up to multiplying vectors by~$(-1)$, it is the set of rows of the matrix
$$
\begin{pmatrix}w_1\\w_2\\w_3\\w_4\end{pmatrix}=\qmatrix{1&1&1&1\\1&1&-1&-1\\1&-1&1&-1\\1&-1&-1&1}.
$$
Note that the action of~$W$ on $M'(\lambda)$ contains all transpositions of vectors of the form~$\pm w_i$ and~$\pm w_j$, so 
Lemma~\ref{resultm2m} can be applied.

\subsection{Some negative results}

\indent
\prr\label{ex7} $\lambda=2\pi_1=2e_1$, $n = 2$. Consider the following ENSS:
 $v_1 = 2e_1$, $v_2 = e_1+e_2$, $v_3 =e_2$, $v_0=e_1= \frac 12 v_1 = v_2 - v_3$.
Use a discriminating linear function $f=3x_1+4x_2$ (see Section~$\ref{sec:unimod}$), then $f(v_1)=6$, $f(v_2)=7$, $f(v_3)=4$,
$f(v_0)=3$. It is clear that $3$ cannot be represented as a sum of integers $4$, $6$, and $7$.

\prr\label{ex6}  $\lambda=\pi_2=e_1+ e_2$, $n \geqslant 3$. Let $v_1=e_1+e_2$, $v_2=e_1-e_2$, $v_3=e_2-e_3$, $v_4=-e_3$. Then
$v_0=e_1=\frac{1}{2}((e_1+e_2)+(e_1-e_2))=(e_1-e_2)+(e_2-e_3)-(-e_3)$,
but $e_1\not \in \ZZ_{\geqslant 0} (v_1,v_2,v_3,v_4)$. To check that it is an ENSS, one can use the discriminating linear function 
$f=3x_1+x_2-5x_3$. 

\prr\label{ex8} $\lambda=\pi_1+\pi_n=(\frac{3}{2}, \frac{1}{2}, \dots,
\frac{1}{2})$, $n\geqslant 2$. Let
$$
v_1 = \left(\frac{3}{2}, \frac{1}{2}, \dots, \frac{1}{2}\right),\;
v_2 = \left(\frac{3}{2}, -\frac{1}{2}, \dots,-\frac{1}{2}\right),\;
v_3 = \left(\frac{1}{2}, \frac{1}{2}, \dots, \frac{1}{2}\right).
$$
Then $v_0=(1, 0, \dots, 0) = 1/3(v_1 + v_2) = v_1 - v_3$,
and if one considers the first coordinate, it is clear that $v_0 \notin \ZZ_{\geqslant 0}(v_1,
v_2, v_3)$.

\prr\label{ex9} $\lambda = \pi_n = \left(\frac{1}{2}, \dots,
\frac{1}{2}\right)$, $n= 5$. To simplify the notation, multiply all the coordinates by~$2$. Let
\begin{gather*}
\qmatrix{v_1\\ v_2\\ v_3\\ v_4\\ v_5\\ v_6
}=\qmatrix{
1& 1& 1& 1& -1\\
1& 1& 1& -1& 1\\
1& 1& -1& 1& 1\\
1& -1& 1& 1& 1\\
-1& 1& 1& 1& 1\\
1& 1& 1& -1& -1 },\\
v_0 = \frac 13 (v_1+v_2+v_3+v_4+v_5)=(1, 1, 1, 1, 1) = v_1+v_2-v_6.\\
\end{gather*}
Now apply the discriminating function $f = 3x_1+3x_2+3x_3+2x_4+2x_5$.

\subsection{Reduction to the already examined cases}

By a {\it shift for $B_n$} we call the procedure of replacing the vector $\lambda=(\ell_1, \dots, \ell_n)$ with the vector $\lambda'=(\ell_1, \ldots, \ell_i-1, \ldots, \ell_n)$, if 
$\ell_i\geqslant 1$. Notice that $\lambda'$ always belongs to $M(\lambda)$ because 
$\lambda-\lambda'\in\Xi$ and $\lambda'$ is a convex linear combination of vectors $\lambda$ and
$(\ell_1, \ldots, -\ell_i, \ldots, \ell_n)$ with suitable coefficients (these vectors both belong to~$M(\lambda)$).

\begin{lemma}\label{l9}
Let $n\geqslant 3$. If $\lambda \in \Xi\setminus \Phi$, 
then the vector $e_1 + e_2$ belongs to~$M(\lambda)$.
\end{lemma}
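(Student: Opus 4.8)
The plan is to deduce the claim from Lemma~\ref{l.ovl}. Throughout, $\lambda=(\ell_1,\dots,\ell_n)$ is the (dominant) highest weight under consideration, so that $\ell_1\geqslant\ell_2\geqslant\dots\geqslant\ell_n\geqslant 0$; the hypothesis $\lambda\in\Xi$ means all $\ell_i\in\ZZ$, and $\lambda\notin\Phi$ excludes in particular $\lambda=e_1=\pi_1$ (the trivial weight $\lambda=0$, for which the statement obviously fails, being tacitly excluded). Since $e_1+e_2=\pi_2$ is the highest weight of $V(\pi_2)$, it lies in $M(\pi_2)$; hence it suffices to prove $M(\lambda)\supseteq M(\pi_2)$, and by Lemma~\ref{l.ovl} this follows once I show $\lambda\succeq\pi_2$, i.e.\ that $\lambda-\pi_2$ is a nonnegative \emph{integer} combination of the simple roots $\alpha_1,\dots,\alpha_n$.

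First I would expand $\lambda-\pi_2=(\ell_1-1,\ell_2-1,\ell_3,\dots,\ell_n)$ in the basis $\alpha_1=e_1-e_2,\dots,\alpha_{n-1}=e_{n-1}-e_n,\alpha_n=e_n$. Writing $\lambda-\pi_2=\sum_{j}c_j\alpha_j$ and comparing coordinates of $e_1,\dots,e_n$ gives a triangular system whose solution is $c_1=\ell_1-1$ and $c_k=\ell_1+\dots+\ell_k-2$ for $2\leqslant k\leqslant n$. All the $c_j$ are integers because the $\ell_i$ are.

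It then remains to check that $c_j\geqslant 0$. Since $\lambda\neq 0$ is dominant, $\ell_1\geqslant 1$, so $c_1\geqslant 0$; and because $\ell_k\geqslant 0$ for $k\geqslant 3$ one has $c_2\leqslant c_3\leqslant\dots\leqslant c_n$, so the only remaining inequality to verify is $c_2=\ell_1+\ell_2-2\geqslant 0$. This can fail only when $\ell_1+\ell_2\leqslant 1$, which together with $\ell_1\geqslant 1\geqslant\ell_2\geqslant 0$ forces $\ell_1=1$, $\ell_2=0$, that is $\lambda=e_1\in\Phi$ --- excluded by hypothesis. Hence all $c_j\geqslant 0$, so $\lambda\succeq\pi_2$ and the lemma follows from Lemma~\ref{l.ovl}.

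There is no genuine obstacle here; the only point requiring attention is the borderline case $\ell_2=0$ (equivalently $\lambda=\ell_1 e_1$ with $\ell_1\geqslant 2$), which is exactly where the constraint $c_2\geqslant 0$ is tight and where one must invoke $\lambda\neq e_1$. Alternatively, and more in the spirit of the shift-for-$B_n$ construction just introduced, one can argue by iterating shifts: when $\ell_2\geqslant 1$ one simply lowers the coordinates to reach $(1,1,0,\dots,0)$, while when $\lambda=\ell_1 e_1$ with $\ell_1\geqslant 2$ one first passes to $\lambda-\alpha_1=(\ell_1-1,1,0,\dots,0)\in M(\lambda)$ (a convex combination of the Weyl conjugates $\ell_1 e_1$ and $\ell_1 e_2$ of $\lambda$) and then shifts the first coordinate down to $1$; both routes land on $e_1+e_2$.
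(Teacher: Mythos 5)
Your proof is correct, but it takes a genuinely different route from the paper's. The paper proves Lemma~\ref{l9} with the shift procedure introduced just before it: starting from $\lambda$, coordinates are decremented one at a time (each step staying inside $M(\lambda)$ by the convexity argument), until a point with coordinate sum $2$ is reached; that point is either some $e_i+e_j$, or $2e_i$, and in the latter case one takes the Weyl conjugate $2e_j$ and observes that $e_i+e_j$ is the midpoint of $2e_i$ and $2e_j$. You instead verify the dominance relation $\lambda\succeq\pi_2$ by solving the triangular system for the coefficients $c_k=\ell_1+\dots+\ell_k-2$ ($k\geqslant 2$), $c_1=\ell_1-1$, in the simple-root basis, and then invoke Lemma~\ref{l.ovl}; your computation and the case analysis ($c_2\geqslant 0$ fails only for $\lambda=e_1\in\Phi$) are both right. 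Your approach is a one-shot algebraic verification and in fact yields the stronger containment $M(\lambda)\supseteq M(\pi_2)$, so it could be fed directly into Corollary~\ref{ovlozhenii} together with Counterexample~\ref{ex6}, making the lemma essentially a corollary of tools the paper already has; the paper's shift argument, by contrast, is the geometric workhorse it reuses in messier situations (half-integer weights for $B_n$, Lemma~\ref{l3.5} for $C_n$, and the $D_n$ reductions), which is presumably why it is employed here as well. Two cosmetic points: your chain ``$\ell_1\geqslant 1\geqslant\ell_2$'' reads as a hypothesis but is really a consequence of $\ell_1+\ell_2\leqslant 1$, $\ell_1\geqslant 1$, $\ell_2\geqslant 0$; and your observation that $\lambda=0$ (which does lie in $\Xi\setminus\Phi$) must be tacitly excluded applies equally to the paper's own proof, which silently assumes $\sum\ell_i\geqslant 2$. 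Your closing shift-based alternative is essentially the paper's argument, differing only in how the case $\lambda=\ell_1 e_1$ is handled (you pass to $\lambda-\alpha_1$ at the outset rather than descending to $2e_i$ and averaging).
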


\begin{proof}
Let $\lambda = (\ell_1, \dots, \ell_n)$. Since $\lambda$ is dominant, we have $\sum_1^n{\ell_i} \geqslant 2$. 
If $\sum_1^n{\ell_i} > 2$ and $\ell_i> 0$, then the point $(\ell_1, \dots, \ell_{i-1}, \ell_i - 1,
\ell_{i+1}, \dots, \ell_n)$ belongs to $M(\lambda)$ (apply the shift). Repeating this procedure,
we show that there is a point $\lambda'\in M(\lambda)$ with $\sum_1^n{\ell_i'} = 2$. It is either a root
$e_i + e_j$, or $2e_i$, in the second case we can obtain $2e_j$ by acting with~$W$, and the convex hull of $2e_i$ and $2e_j$ contains the point $e_i + e_j$, so $e_i+e_j\in M(\lambda)$, 
hence $e_1+e_2\in M(\lambda)$, as well.
\end{proof}

Now, using Corollary~$\ref{ovlozhenii}$, we show how all cases from~$B_n$, which do not appear in Theorem~$\ref{mth}$, can be reduced to 
Examples $\ref{ex7}$~-- $\ref{ex9}$. If all coordinates of $\lambda$ are integers and $n\geqslant 3$, then every weight $\lambda$ which does not belong to~$\Phi$ 
 can be reduced to $e_1+e_2$ by Lemma~$\ref{l9}$, i.e. Counterexample~$\ref{ex6}$ can be applied. If all coordinates of $\lambda$ are integers and $n=2$, 
then $\lambda = (\ell_1, \ell_2)\ne (2,0)$ but it is not a root, which gives $\ell_1\geqslant 2$, hence $(2,0) \in M(\lambda)$, and we can apply
Corollary~$\ref{ovlozhenii}$ to Counterexample~$\ref{ex7}$.

If all coordinates of $\lambda = (\ell_1, \dots, \ell_n)$ are half-integers, and if in addition there exists an~$i$ such that
$2 \ell_i \geqslant 3$, then~$M(\lambda)$ contains the point $(\frac{3}{2}, \frac{1}{2}, \dots, \frac{1}{2})$ (apply several shifts),
and we can apply Corollary~$\ref{ovlozhenii}$ to Counterexample~$\ref{ex8}$.

Finally, if $\lambda = \left(\frac{1}{2}, \dots, \frac{1}{2}\right)$, then 
after multiplying by~2, 
$M'(\lambda)=\{(\underbrace{\pm1,\pm1,\ldots,\pm1}_{n\;\text{coordinates}})\}$.
For $n=5$ see Counterexample~$\ref{ex9}$, for~$n> 5$ an NSS can be constructed in the following way: take Counterexample~$\ref{ex9}$ for $n=5$
and append to each~$v_i$ $n-5$ coordinates equal to the $5$th coordinate of~$v_i$.

\section{The root system $C_n$} 
The root system $C_n$, $n \geqslant 3$, is formed by vectors $\{ \pm e_i \pm
e_j \, , \pm 2 e_i \,|\, 1 \leqslant i,j \leqslant n,\, i\neq j\}$. 
With respect to the system of simple roots 
$$
\alpha_1=e_1-e_2,\,\alpha_2=e_2-e_3,\ldots,\,\alpha_{n-1}=e_{n-1}-e_n,\, \alpha_n=2 e_n
$$
the fundamental weights have the form
$$
\pi_1=e_1,\, \pi_2=e_1+e_2,\, \ldots, \, \pi_n=e_1+\ldots+e_n.
$$
The root lattice $\Xi=\bigl\{(a_1,\ldots,a_n)\in\ZZ^n \mid \sum_1^n a_i \kratno 2\bigr\}$. The weight 
lattice~$\Lambda=\ZZ^n$.
The Weyl group $W$ acts by permutations on the set of coordinates and by sign changes on an arbitrary subset of coordinates.
A weight $\lambda=(\ell_1, \ell_2, \dots, \ell_{n})$ is dominant if and only if $\ell_1\geqslant  \ldots \geqslant \ell_n\geqslant 0$.
The set $M(\lambda)$ coincides with the set of integer points in $P(\lambda)$ such that the sum of their coordinates has the same parity as~$\lambda$.

\subsection{Positive results} 
\indent
\slu\label{line8} $\lambda=\pi_1=e_1$. Obviously, $M(\lambda)$ is hereditarily normal.

\slu\label{sluchaj5} $\lambda=\pi_2=e_1+e_2$, $n=3,4$.
For $n=3$ $M(\lambda)$ is unimodular, hence it is hereditarily normal.
For $n=4$ it is almost unimodular, since all nonzero determinants are equal to~$\pm 2$ or
$\pm 4$. Without loss of generality, a $4$-tuple of vectors with the determinant~$\pm 4$ coincides with the set of rows of the matrix
$$
\qmatrix{v_1\\v_2\\v_3\\v_4}=
\qmatrix{1&1&0&0\\1&-1&0&0 \\0&0&1&1
\\0&0&1&-1}.
$$
Note that $W$ contains a 4-element subgroup which acts on $\{\langle v_1\rangle ,\langle v_2\rangle ,\langle v_3\rangle ,\langle v_4\rangle \}$ as the Klein four-group , hence, we can apply Lemma~\ref{resultm2m}.

\subsection{Some negative results}
\indent
\prr\label{ex1} $\lambda=\pi_1+\pi_2=(2,1,0)$, $n=3$. Let
\begin{gather*}
\begin{pmatrix}v_1\\v_2\\v_3\\v_4\end{pmatrix}=
\qmatrix{2&1&0\\0&2&1\\1&0&2\\1&2&0},\\
v_0=(1,1,1)=1/3(v_1+v_2+v_3)=v_1+v_2-v_4.
\end{gather*}
We can apply the discriminating function $f=100x_1+10x_2+x_3$.

\prr \label{ex2} Let $\lambda=2\pi_1=2e_1$, $n=3$. Construct an ENSS:
\begin{gather*}
\begin{pmatrix}v_1\\v_2\\v_3\\v_4\end{pmatrix}=
\qmatrix{2&0&0\\0&2&0\\1&0&1\\0&-1&1},\\
v_0 = e_1 + e_2 = 1/2(v_1 + v_2) = v_3 - v_4,\;
f=5x_1+3x_2+9x_3.
\end{gather*}

\prr \label{ex3} Take $\lambda=\pi_3=e_1+e_2+e_3$, $n=3$. Consider the following ENSS:
\begin{gather*}
\begin{pmatrix}v_1\\v_2\\v_3\\v_4\end{pmatrix}=
\qmatrix{1&1&1\\1&-1&-1\\0&1&0\\0&0&-1},\\
v_0=e_1=1/2(v_1+v_2)=v_1-v_3+v_4,\; f=11x_1+6x_2-14x_3.
\end{gather*}

\prr\label{ex4} $\lambda=\pi_4=e_1+e_2+e_3+e_4$, $n=4$. Consider vectors
\begin{gather*}
\begin{pmatrix}v_1\\v_2\\v_3\\v_4\end{pmatrix}=\qmatrix{1&1&1&1\\1&1&-1&-1\\1&0&1&0\\0&-1&1&0}.
\end{gather*}
Consider $v_0=(1,1,0,0)=\frac12(v_1+v_2)=v_3-v_4$ and the discriminating function
$f=5x_1+5x_2+8x_3-x_4$. Then $f(v_1)=17$, $f(v_2)=f(v_4)=3$,
$f(v_3)=13$, $f(v_0)=10$. Since $f(v_1)$ and $f(v_3)$ are too big, $v_1$ and $v_3$ cannot be used in a $\ZZ_{\geqslant 0}$-combination.
But~$10$ is not divisible by~$3$, and we cannot obtain~$v_0$, using only $v_2$ and $v_4$.

\prr\label{ex5} Let $\lambda=\pi_2=e_1+e_2$, $n=5$. Consider an ENSS:
\begin{gather*}
\begin{pmatrix}v_1\\v_2\\v_3\\v_4\\v_5\\v_6\end{pmatrix}=
\qmatrix{1&0&1&0&0\\1&0&-1&0&0\\0&1&0&1&0\\0&1&0&-1&0\\0&0&1&0&1\\0&0&0&1&1},\\
v_0=e_1+e_2=1/2(v_1+v_2+v_3+v_4)=v_2+v_3+v_5-v_6,\\
f=5x_1+6x_2+x_3+2x_4+20x_5.
\end{gather*}

\begin{remark}\label{udlinim}
Counterexamples $\ref{ex1}$~--- $\ref{ex3}$ work for all $n\geqslant 3$, Counterexample~$\ref{ex4}$ works for all $n\geqslant 4$, and 
Counterexample~$\ref{ex5}$ works
for all $n\geqslant 5$. Indeed, we can append $n-3$ ($n-4$ and $n-5$, respectively) zero coordinates to each vector.
\end{remark}

\subsection{Reduction to the already examined cases}

Consider two cases: a) all $ \ell_i\in\{0, 1\}$; b) there is at least one $\ell_i$ with $|\ell_i |\geqslant 2$.

First consider case~a): all $\ell_i\in\{0,1\}$, which means that $\lambda=\pi_k=e_1+e_2+\ldots+e_k$,
$k\leqslant n$.

\begin{lemma}\label{+n}
An NSS for the pair $(k,n_0)$ serves as an NSS for all the pairs $(k,n)$, where $n\geqslant n_0$.
\end{lemma}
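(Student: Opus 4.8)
The plan is to realize the step from $n_0$ to $n$ through the linear map $\iota\colon\QQ^{n_0}\hookrightarrow\QQ^n$ that appends $n-n_0$ zero coordinates, exactly as is done for the explicit counterexamples in Remark~\ref{udlinim}, and to show that $\iota$ sends an NSS of $M(\pi_k)$ for $C_{n_0}$ to an NSS of $M(\pi_k)$ for $C_n$. The only properties of $\iota$ I will use are that it is linear, injective, and carries the lattice $\ZZ^{n_0}$ into $\ZZ^n$.

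First I would check that $\iota$ maps $M(\pi_k)$ of $C_{n_0}$ into $M(\pi_k)$ of $C_n$, using the description $M(\pi_k)=(\pi_k+\Xi)\cap P(\pi_k)$. Indeed, $\iota$ sends the fundamental weight $\pi_k=(1,\ldots,1,0,\ldots,0)$ of $C_{n_0}$ to that of $C_n$; it maps the root lattice into the root lattice, since appending zeros preserves integrality and the coordinate sum, hence its parity; and it carries the Weyl orbit $W\pi_k$ for $C_{n_0}$ into the Weyl orbit $W\pi_k$ for $C_n$, because padding a signed permutation of $(1^k,0^{n_0-k})$ with zeros yields a signed permutation of $(1^k,0^{n-k})$. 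By linearity $\iota$ then maps $P(\pi_k)=\conv(W\pi_k)$ into the corresponding polytope for $C_n$, and combining these three facts gives $\iota(M(\pi_k))\subseteq M(\pi_k)$. In particular $\{\iota(v_1),\ldots,\iota(v_r)\}$ is a genuine subset of weights of $C_n$.

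It then remains to transport non-saturation. Assuming $\{v_1,\ldots,v_r\}$ is not saturated, I would fix a witness $v_0\in(\ZZ(v_1,\ldots,v_r)\cap\Qg0(v_1,\ldots,v_r))\setminus\ZZ_{\geqslant 0}(v_1,\ldots,v_r)$ and apply $\iota$ to the integer and nonnegative-rational combinations representing it; linearity immediately places $\iota(v_0)$ in $\ZZ(\iota(v_1),\ldots,\iota(v_r))\cap\Qg0(\iota(v_1),\ldots,\iota(v_r))$. The one delicate point, and the step I would treat most carefully, is showing $\iota(v_0)\notin\ZZ_{\geqslant 0}(\iota(v_1),\ldots,\iota(v_r))$: were $\iota(v_0)=\sum_i b_i\iota(v_i)$ with all $b_i\in\ZZ_{\geqslant 0}$, then $\iota(v_0-\sum_i b_iv_i)=0$, and injectivity of $\iota$ would force $v_0=\sum_i b_iv_i$, contradicting the choice of $v_0$. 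Hence $\{\iota(v_1),\ldots,\iota(v_r)\}$ is not saturated, so it is an NSS for the pair $(k,n)$. Everything except this injectivity argument is routine bookkeeping; since $\iota$ is in fact a bijection onto a coordinate subspace, the same reasoning shows saturation is reflected as well, although only the stated direction is needed here.
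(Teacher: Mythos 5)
Your proof is correct and takes essentially the same route as the paper, whose entire argument is ``append $n-n_0$ zero coordinates to each vector''; you have simply made explicit the two facts the paper leaves implicit, namely that the padded vectors lie in $M(\pi_k)$ for $C_n$ and that non-saturation is preserved under the injective linear embedding.
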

\begin{proof}
Append $n-n_0$ zero coordinates to each vector.
\end{proof}

\begin{lemma}\label{k+2}
An NSS for the pair $(k,n)$, where $k+2\leqslant n$, is also an NSS for the pair $(k+2, n)$.
\end{lemma}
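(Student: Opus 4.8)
The plan is to deduce the inclusion $M(\pi_k)\subseteq M(\pi_{k+2})$ from Lemma~\ref{l.ovl} and then observe that being an NSS is an intrinsic property of a finite set of vectors, so that any non-saturated subset of $M(\pi_k)$ automatically serves as a non-saturated subset of the larger set $M(\pi_{k+2})$.

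First I would note that, since $n$ is kept fixed, both weight sets live in the same space $\QQ^n$, so it makes sense to compare the modules $\pi_k$ and $\pi_{k+2}$ via the partial order $\succeq$. Because $k+2\leqslant n$, the weight $\pi_{k+2}=e_1+\ldots+e_{k+2}$ is a genuine fundamental weight of $C_n$, and
$$
\pi_{k+2}-\pi_k=e_{k+1}+e_{k+2}.
$$
The key observation is that $e_{k+1}+e_{k+2}$ is a positive root of $C_n$ (the roots being $\pm e_i\pm e_j$ and $\pm 2e_i$), hence a nonnegative integer combination of the simple roots $\alpha_1,\ldots,\alpha_n$. If one prefers an explicit expression, one has $e_{k+1}+e_{k+2}=\alpha_{k+1}+2\alpha_{k+2}+\ldots+2\alpha_{n-1}+\alpha_n$ when $k+2<n$, and $e_{n-1}+e_n=\alpha_{n-1}+\alpha_n$ when $k+2=n$, but the root-system remark already suffices. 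Therefore $\pi_{k+2}\succeq\pi_k$.

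Applying Lemma~\ref{l.ovl} with $\lambda=\pi_{k+2}$ and $\lambda'=\pi_k$ gives $M(\pi_k)\subseteq M(\pi_{k+2})$. Now let $N$ be an NSS for the pair $(k,n)$, that is, a non-saturated subset $N\subseteq M(\pi_k)$. By the inclusion just proved, $N\subseteq M(\pi_{k+2})$. Since saturation, and hence the property of being an NSS, depends only on the vectors of $N$ and not on the ambient weight set, the very same subset $N$ is an NSS for the pair $(k+2,n)$, which is exactly the claim.

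I do not expect a genuine obstacle here: the whole content is the comparison $\pi_{k+2}\succeq\pi_k$, which reduces to the elementary fact that $e_{k+1}+e_{k+2}\in\Phi^+$ for $C_n$. The only point needing a moment's care is to confirm that saturation is an intrinsic notion of a vector set, so that an NSS for $\pi_k$ transports verbatim to $\pi_{k+2}$ with no re-verification; should one instead wish to phrase matters at the level of hereditary normality rather than of a single witnessing subset, one may appeal directly to Corollary~\ref{ovlozhenii}.
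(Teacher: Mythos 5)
Your proof is correct and follows essentially the same route as the paper: both reduce the lemma to the comparison machinery (Lemma~\ref{l.ovl}/Corollary~\ref{ovlozhenii}) applied with $\lambda'=\pi_k$, $\lambda=\pi_{k+2}$, and then transport the non-saturated subset verbatim, since saturation is intrinsic to the set of vectors. The only (cosmetic) difference lies in the certificate for the hypothesis: the paper exhibits $\pi_k=\frac12\bigl(\pi_{k+2}+(e_1+\ldots+e_k-e_{k+1}-e_{k+2})\bigr)$ as a midpoint of two Weyl conjugates of $\pi_{k+2}$ lying in the correct coset of~$\Xi$, whereas you verify $\pi_{k+2}-\pi_k=e_{k+1}+e_{k+2}\in\Phi^+$ to get $\pi_{k+2}\succeq\pi_k$; these are equivalent ways of checking the same condition.
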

\begin{proof}
If $\lambda=e_1+\ldots+e_{k+2}$, then
$$
e_1+e_2+\ldots+e_k=\lambda-(e_{k+1}+e_{k+2})=\frac12 (\lambda+(e_1+\ldots+e_k-e_{k+1}-e_{k+2})),
$$ 
hence it belongs to~$M(\lambda)$. 
Applying Corollary~$\ref{ovlozhenii}$, we obtain that an NSS for $(k,n)$ is also an NSS for $(k+2,n)$.
\end{proof}

Now take a pair $(k,n)$, not equal to $(1,n)$, $(2,2)$, $(2,3)$, and $(2,4)$, where $k\leqslant n$.

If $k$ is even and $n\leqslant 4$, then it is the pair $(4,4)$, i.e. it is Counterexample~$\ref{ex4}$.
If $k$ is even and $n\geqslant 5$, then we can modify Counterexample~$\ref {ex5}$ for the pair $(2,5)$ to get the required NSS: 
firstly apply Lemma~$\ref{+n}$, and then apply Lemma~$\ref{k+2}$.
If $k$ is odd and $k\geqslant 3$, then we can modify Counterexample~$\ref{ex3}$ for the pair $(3,3)$ to get the required NSS in the same way.

\medskip
Now consider Case~b).

\begin{definition} By a {\it shift for $C_n$} we denote the procedure of replacing the point 
$\lambda=(\dots,l,\dots,l',\dots)$ 
with the point $\lambda'=(\dots,l-1,\dots,l'+1,\dots)$ (at the same places) when $l-l' \geqslant 2$.
\end{definition}

It is easy to see that the point $\lambda'$ belongs to $M(\lambda)$. 
Also, if we consequently apply steps, then their number is finite.

\begin{lemma}\label{l3.5}
Let $\lambda=(\ell_1,\ldots, \ell_n)$, and $\exists\, i$ such that $\ell_i\geqslant 2$. Then $M(\lambda)$ contains either $(2,0,\ldots,0)$ or $(2,1,0,\ldots,0)$.
\end{lemma}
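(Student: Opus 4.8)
The plan is to deduce the statement from the partial order $\succeq$ and Lemma~\ref{l.ovl}, rather than from any direct polytope computation. Since $\lambda$ is dominant, the hypothesis $\ell_i\geqslant 2$ forces $\ell_1\geqslant 2$. Both candidate points $(2,0,\ldots,0)$ and $(2,1,0,\ldots,0)$ are themselves dominant weights (they are exactly the weights $\lambda$ of Counterexamples~\ref{ex2} and~\ref{ex1}), so by the argument inside the proof of Lemma~\ref{l.ovl} it suffices to show that $\lambda$ dominates the appropriate one, i.e. that $\lambda$ minus that point is a nonnegative integer combination of the simple roots $\alpha_i=e_i-e_{i+1}$ ($i<n$) and $\alpha_n=2e_n$. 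Once $\lambda\succeq\mu$ with $\mu$ the chosen point, Lemma~\ref{l.ovl} gives $\mu\in M(\mu)\subseteq M(\lambda)$.

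First I would record the decomposition criterion for $C_n$: writing $\sum_i c_i e_i=\sum_i a_i\alpha_i$ one gets $a_i=c_1+\ldots+c_i$ for $i<n$ and $a_n=\frac12(c_1+\ldots+c_n)$. Hence $\sum_i c_i e_i$ is a nonnegative integer combination of the simple roots precisely when all partial sums $c_1+\ldots+c_i$ with $i<n$ are nonnegative and the total $c_1+\ldots+c_n$ is a nonnegative even integer. This reduces the whole lemma to an elementary check on partial sums.

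Next I would split on the parity of $S=\ell_1+\ldots+\ell_n$, which is the parity of $\lambda$ and hence dictates which target is even allowed to lie in $M(\lambda)$. If $S$ is even I choose $(2,0,\ldots,0)$; then $c=(\ell_1-2,\ell_2,\ldots,\ell_n)$, every partial sum is $\geqslant\ell_1-2\geqslant 0$, and the total $S-2$ is nonnegative and even, so $\lambda\succeq(2,0,\ldots,0)$. If $S$ is odd I choose $(2,1,0,\ldots,0)$; then $c=(\ell_1-2,\ell_2-1,\ell_3,\ldots,\ell_n)$, the total $S-3$ is even, and every partial sum is $\geqslant\ell_1+\ell_2-3$, so the only condition to secure is $\ell_1+\ell_2\geqslant 3$ (which also yields $S\geqslant 3$, i.e. nonnegativity of the total).

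The only genuinely delicate point is therefore the inequality $\ell_1+\ell_2\geqslant 3$ in the odd case, and this is exactly where the parity hypothesis enters. If $\ell_1\geqslant 3$ it is immediate; if $\ell_1=2$, then oddness of $S$ forces $\ell_2+\ldots+\ell_n$ to be odd, hence positive, hence $\ell_2\geqslant 1$ by dominance, so again $\ell_1+\ell_2\geqslant 3$. I expect this short case analysis to be the main (and essentially only) obstacle; everything else is the mechanical partial-sum/parity bookkeeping above, after which Lemma~\ref{l.ovl} closes the argument.
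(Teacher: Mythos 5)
Your proof is correct, but it takes a genuinely different route from the paper's. The paper proves this lemma dynamically: it applies the ``shift for $C_n$'' together with Weyl-group sign changes, moving $\lambda$ step by step inside $M(\lambda)$ (make $\ell_n\leqslant 0$, shift it against $\ell_1$ until $\ell_1$ reaches $2$, then fix $\ell_1$ and keep shifting the remaining coordinates), and argues that the process terminates and can only stop at $(2,0,\ldots,0)$ or $(2,1,0,\ldots,0)$. You instead verify membership statically: writing $\lambda-\mu=\sum_i a_i\alpha_i$, you observe $a_i=c_1+\ldots+c_i$ for $i<n$ and $a_n=\frac12(c_1+\ldots+c_n)$, reduce everything to nonnegativity of partial sums plus parity of the total, and then invoke Lemma~\ref{l.ovl} (both targets being dominant). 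Your route makes the parity dichotomy explicit --- the parity of $\sum_i\ell_i$ decides which of the two points lies in $M(\lambda)$, something the paper's proof leaves implicit in ``if no shifts are possible'' --- and it replaces the termination and invariance bookkeeping of the shift algorithm, which the paper states rather informally, by a one-line verification. What the paper's formulation buys in exchange is reusability: the same shift algorithm is invoked again in the $D_n$ section (where the text explicitly applies ``the algorithm from the proof of Lemma~\ref{l3.5}'' to the first $n-1$ coordinates), whereas your partial-sum computation would have to be redone for the $D_n$ simple roots. One small imprecision to fix: your claim that every partial sum is $\geqslant\ell_1+\ell_2-3$ fails for the first partial sum when $\ell_2\geqslant 2$ (that sum equals $\ell_1-2$); this is harmless, since $\ell_1-2\geqslant 0$ is exactly the hypothesis, but the bounds for $i=1$ and for $i\geqslant 2$ should be stated separately.
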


\begin{proof}
Since $\lambda$ is dominant, we have $\ell_1\geqslant 2$. Now change $\lambda$, during this process we let it be non-dominant. 
Change sign at~$\ell_n$ in such a way that $\ell_n\leqslant 0$, and shift it with $\ell_1$ several times till the moment when $\ell_1$ attains the 
value~$2$. If meanwhile $\ell_n$ becomes positive, then change its sign to make it negative, and so on. Then fix $\ell_1=2$ and shift other coordinates 
in any possible way, changing signs at some coordinates, if needed. This process is finite, and if no shifts are possible, then it is either the point~$(2,0,\ldots,0)$, or the point~$(2,1,0,\ldots,0)$.
\end{proof}

Now we can apply Corollary~$\ref{ovlozhenii}$ to Counterexamples~$\ref{ex1}$ and~$\ref{ex2}$. We obtain that in Case~b) 
there exists an NSS for every~$\lambda$.

\section{The root system $D_n$}
The root system 
$D_n$, $n \geqslant 4$, consists of vectors $\{ \pm e_i \pm e_j  \,|\, 1 \leqslant i,j \leqslant n,\, i\neq j\}$. 
With respect to the system of simple roots 
$$
\alpha_1=e_1-e_2,\,\alpha_2=e_2-e_3,\ldots,\,\alpha_{n-1}=e_{n-1}-e_n,\, \alpha_n=e_{n-1}+e_n
$$
the fundamental weights have the form
\begin{gather*}
\pi_1=e_1,\, \pi_2=e_1+e_2,\, \ldots, \,\pi_{n-2}=e_1+\ldots+e_{n-2}, \\
 \pi_{n-1}=\frac 12 (e_1+\ldots+e_{n-1}-e_n),\, \pi_n=\frac 12 (e_1+\ldots+e_{n-1}+e_n).
\end{gather*}
The root lattice $\Xi=\bigl\{(a_1,\ldots,a_n)\in\ZZ^n \mid \sum_1^n a_i \kratno 2\bigr\}$. The weight lattice~$\Lambda$ has the form
$$
\Lambda=\{(\ell_1,
\ell_2, \dots, \ell_{n}) \;|\;  2\ell_i \in \ZZ, \ell_i - \ell_j \in \ZZ, i,j = 1,\ldots,n \}.
$$
The Weyl group $W$ acts by permutations on the set of coordinates and by sign changes on any set of coordinates of even cardinality.
A weight $\lambda=(\ell_1, \ell_2, \dots, \ell_{n})$ is dominant if and only if $\ell_1\geqslant  \ldots \geqslant \ell_n$, $\ell_{n-1}+\ell_n\geqslant 0$.
If all the coordinates of $\lambda$ are integers (strictly half-integers), then the set $M(\lambda)$ consists of all integer 
(strictly half-integer)
points in the polytope $P(\lambda)$, such that their sum of coordinates differs with the sum of coordinates of~$\lambda$ by an even number.

The reasoning for~$D_n$ has another structure than for~$B_n$ and~$C_n$. The cases of integer and half-integer coordinates of the highest weight 
are considered separately. Many NSSs are taken from Case~$C_n$. The {\it Shift for $D_n$} is the same as the Shift for~$C_n$.

\subsection{Coordinates of all weights are integers}

\indent
\slu\label{line2} $\lambda=\pi_1=e_1$. Obviously, $M(\lambda)$ is hereditarily normal.

\slu\label{line12} $\lambda=\pi_2=e_1+e_2$,
$n=4$. The set $M(\lambda)$ coincides with the analogous set from Case~$\ref{sluchaj5}$, hence it is hereditarily normal.

Let us construct NSSs in all the other cases. To use NSSs constructed for~$C_n$, it is only necessary to check that 
the weights under consideration belong to~$M(\lambda)$ for~$D_n$. If a point~$v$ has a zero coordinate, then its orbits under the Weyl groups in cases $C_n$ and $D_n$ 
coincide, because the zero coordinate can be, if needed, multiplied by $-1$.

\prr\label{example10} $\lambda=\pi_1+\pi_2=(2,1,0,0)$. We can use Counterexample~$\ref{ex1}$.

\prr\label{example11} $\lambda=2\pi_1=(2,0,0,0)$, $n=4$. Counterexample~$\ref{ex2}$ with the appended column of zeroes works.

\prr\label{ex10} $\lambda=\pi_3+\pi_4=e_1+e_2+e_3$, $n=4$. 
Counterexample~$\ref{ex3}$ with the appended column of zeroes works.

\prr\label{ex12} $\lambda=\pi_2=e_1+e_2$, $n=5$. Counterexample~$\ref{ex5}$ can be applied.

\prr\label{ex11} $\lambda=2\pi_4=e_1+e_2+e_3+e_4$, $n=4$. Counterexample~$\ref{ex4}$ can be applied.

\medskip
Now, using Counterexamples above, let us show that NSSs exist if a) all nonzero coordinates
of the highest weight equal $\pm 1$ and their sum is odd; b) all nonzero coordinates of the highest weight equal $\pm 1$ and their sum is even; 
c) $\lambda$ has a coordinate such that its absolute value is not less than $2$.

In this subsection all coordinates are integers, consequently, every set of weights for every~$n$ can be considered as a set of weights for a 
greater $n$, if we fill new coordinates with zeroes. 
Hence, Counterexamples~\ref{example10}, \ref{example11}, \ref{ex10}, and~\ref{ex11} provide us with NSSs for highest weights of the same form for all $n\geqslant 4$, and Counterexample~\ref{ex12}~--- for all $n\geqslant 5$.

In case~a), there are either~$3$ nonzero coordinates, and it is Counterexample~\ref{ex10}, or at least~$5$ nonzero coordinates. 
We can make two last of them zero: take $\lambda'$ which differs from $\lambda$ by the signs of two last coordinates, and replace~$\lambda$ 
with the midpoint of the interval $\lambda\lambda'$. Then treat two more coordinates, etc., and finally we reduce this case to 
Counterexample~$\ref{ex10}$.

In case b), if we have only two nonzero coordinates, we can obtain an NSS from Counterexample~$\ref{ex12}$:
just append the required number of zeroes. If there are $4$ nonzero coordinates, then an NSS can be obtained from Counterexample~$\ref{ex11}$
by appending the required number of zeroes. If there are more than $4$ nonzero coordinates (recall that each equals~$\pm 1$), 
then make two last of them zero, then two more, and repeat this procedure up to the moment when their number equals $4$, and then use Corollary~$\ref{ovlozhenii}$.

In case c), depending on the parity of $\sum_1^{n}\ell_i$,
one has to show that~$M(\lambda)$ contains either the point $(2,0\ldots,0)$ or the point $(2,1,0,\ldots,0)$. 
Firstly change $\lambda$ to a point having at least one zero coordinate: let $\lambda'$ be the vector obtained from~$\lambda$ by 
the simultaneous sign change of the two last coordinates,
then apply the Shift to the second and the last but one coordinates of~$\lambda'$ up to the moment when the last but one coordinate attains zero. 
Permute $n-1$ last coordinates to make $\ell_n=0$ and apply the algorithm from the proof of Lemma~\ref{l3.5} of Case~$C_n$ to $n-1$ first 
coordinates of~$\lambda'$. If during this process we acted by sign changes of an odd number of indices for $C_n$, the same is possible for $D_n$: 
due to the form of~$\lambda'$ we can also change the sign of the zero coordinate.

\subsection{Coordinates greater than~1}
In this section we suppose that coordinates of all weights are nonintegers and that the highest weight has a coordinate whose absolute value is not less than $\frac 32$.

\begin{lemma}
Under the conditions formulated above, $M(\lambda)$ contains a point of the form $\left(\frac
32,\frac 12, \frac 12, \ell'_4,\ell'_5,\ldots,\ell'_n\right)$, where $\ell'_i$ are half-integers, $i=4,\ldots,n$.
\end{lemma}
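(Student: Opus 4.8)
The plan is to exhibit a \emph{dominant} weight $p$ of the required shape satisfying $\lambda\succeq p$; then $p\in M(p)\subseteq M(\lambda)$ by Lemma~\ref{l.ovl}, and we are done. First I record the constraints coming from dominance and half-integrality: $\ell_1\geqslant \ell_2\geqslant\ldots\geqslant\ell_{n-1}\geqslant \frac12$ and $\ell_n\geqslant-\ell_{n-1}$, together with $\ell_1\geqslant\frac32$ by hypothesis. Among the points $(\frac32,\frac12,\frac12,\ell'_4,\ldots,\ell'_n)$, dominance forces $\ell'_4=\ldots=\ell'_{n-1}=\frac12$ and $\ell'_n\in\{\frac12,-\frac12\}$, so there are exactly two candidates, $p_+=(\frac32,\frac12,\ldots,\frac12)$ and $p_-=(\frac32,\frac12,\ldots,\frac12,-\frac12)$, both of the required form.

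Write $v=\lambda-p$ and expand $v=\sum_i c_i\alpha_i$ in the simple roots $\alpha_i=e_i-e_{i+1}$ ($i\leqslant n-1$), $\alpha_n=e_{n-1}+e_n$. Solving the resulting linear system gives $c_j=\sum_{i\leqslant j}v_i$ for $j\leqslant n-2$, together with $c_n=\frac12\sum_{i=1}^n v_i$ and $c_{n-1}=c_n-v_n$. The coordinates $v_1=\ell_1-\frac32$ and $v_i=\ell_i-\frac12$ ($2\leqslant i\leqslant n-1$) are nonnegative integers, so all $c_j$ with $j\leqslant n-2$ are nonnegative integers automatically; moreover $c_n$ and $c_{n-1}$ are integers precisely when $\sum_i v_i$ is even. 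Since the coordinate sums of $p_+$ and $p_-$ differ by $1$, exactly one candidate makes $\sum_i v_i$ even; I take $p$ to be that one, so that all $c_i$ are integers and $p\in\lambda+\Xi$ holds. It then remains only to verify $c_n\geqslant 0$ and $c_{n-1}\geqslant 0$, which is exactly $\lambda\succeq p$.

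These two inequalities read $\sum_{i=1}^n\ell_i\geqslant \frac n2+1$ and $\sum_{i=1}^{n-1}\ell_i-\ell_n\geqslant \frac n2$ for $p_+$, and $\sum_{i=1}^n\ell_i\geqslant\frac n2$ and $\sum_{i=1}^{n-1}\ell_i-\ell_n\geqslant\frac n2+1$ for $p_-$. The universal bounds $\sum_{i=1}^n\ell_i\geqslant\frac n2$ and $\sum_{i=1}^{n-1}\ell_i-\ell_n\geqslant\sum_{i=1}^{n-2}\ell_i\geqslant\frac n2$ follow at once from the dominance constraints above, so $p_+$ always passes its second inequality and $p_-$ always passes its first. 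The main obstacle, and the only delicate point, is the two boundary cases where the remaining inequality becomes an equality: for $p_+$ it can fail only when $\sum_i\ell_i=\frac n2$, but then $\sum_i v_i$ is odd for $p_+$, so $p_+$ is not the chosen candidate; for $p_-$ it can fail only when the chain of estimates collapses, forcing $\lambda=p_+$, but then $p_+$ (with $v=0$) is the chosen candidate and the assertion is trivial. Hence the chosen candidate $p$ always satisfies $\lambda\succeq p$.

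I expect the genuine difficulty to lie in the asymmetric role of the last two coordinates, caused by the simple root $\alpha_n=e_{n-1}+e_n$, which makes the cone description for $D_n$ differ from the one for $C_n$; the care is needed to match each boundary equality with precisely the parity class in which the offending candidate is discarded, so that the surviving candidate always lies below $\lambda$ in the dominance order.
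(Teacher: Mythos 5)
Your proof is correct, but it takes a genuinely different route from the paper's. The paper argues algorithmically inside $M(\lambda)$: it changes the signs of the last two coordinates (a $D_n$ Weyl move) to create negative entries, then repeatedly applies the shift (subtracting roots $e_i-e_j$, which keeps the point inside $M(\lambda)$) to drive $\ell_1$ down to $\frac32$ and $\ell_2$ down to $\frac12$, treats the third coordinate similarly, and finishes with permutations and even sign changes; the intermediate points are allowed to be non-dominant, and the tail entries $\ell'_4,\ldots,\ell'_n$ are whatever the procedure leaves there. You instead stay entirely within the dominance order: you identify the only two dominant candidates $p_\pm$ of the required shape, solve explicitly for the simple-root coefficients of $\lambda-p_\pm$, use the parity of $\sum_i v_i$ to select the candidate for which $c_{n-1}$ and $c_n$ are integers, verify nonnegativity via two linear inequalities whose boundary cases are excluded by that same parity, and then invoke Lemma~\ref{l.ovl}. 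Your computation of the coefficients ($c_j=\sum_{i\leqslant j}v_i$ for $j\leqslant n-2$, $c_n=\frac12\sum_{i=1}^n v_i$, $c_{n-1}=c_n-v_n$) and the case analysis are correct. What your approach buys: it is fully rigorous where the paper's shift procedure is informal (``shift till the moment when\ldots'', ``if necessary''), it pins down exactly which point of the required form lies in $M(\lambda)$, and it isolates the $D_n$ subtlety caused by $\alpha_n=e_{n-1}+e_n$ in a clean parity argument. What the paper's approach buys: uniformity with the shift arguments used throughout the $B_n$ and $C_n$ sections, no need to invert the simple-root matrix, and no need for the target point to be dominant. One small remark: your recorded list of constraints omits $\ell_{n-1}\geqslant\ell_n$ (which is part of $D_n$-dominance as stated in the paper), yet you use it in the bound $\sum_{i=1}^{n-1}\ell_i-\ell_n\geqslant\sum_{i=1}^{n-2}\ell_i$ and in the collapse analysis forcing $\lambda=p_+$; since it is a true consequence of dominance this is harmless, but it should be listed explicitly.
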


\begin{proof}
Since $\lambda=(\ell_1,\ldots,\ell_n)$ is dominant, $\ell_1$ is one of the coordinates with the maximal absolute value.
Now change $\lambda$, letting it be nondominant.
If $\ell_1> 3/2$, 
we need to replace~$\lambda$ with a point having negative coordinates (for this it suffices to change the signs of two last coordinates) and then shift 
$\ell_1$ with any negative coordinates till the moment when~$\ell_1$ attains~$3/2$. 
Now fix $\ell_1$ and perform the same procedure with~$\ell_2$ till the moment when $\ell_2=1/2$. If now~$\ell_3$ and~$\ell_4$ have the same sign, change signs at $\ell_2$ and $\ell_4$ and then shift~$\ell_3$ and~$\ell_4$ till
the moment when one of them becomes~$\pm 1/2$. 
Permuting the 
coordinates, if needed, we may suppose that we obtained the point~$(3/2, \pm 1/2, \pm 1/2,\ldots)$. Now, if necessary, change the signs of the pairs of 
coordinates~$2,4$ and $3,4$ and obtain the point of the required form.
\end{proof}

\prr For $\lambda=\left(\frac 32,\frac 12, \frac 12, \ell'_4,\ell'_5,\ldots,\ell'_n\right)$ consider the following ENSS:
\begin{gather*}
v_1=\left(\frac 32,\frac 12, \frac 12, \ell'_4,\ell'_5,\ldots,\ell'_n\right),\;
v_2=\left(-\frac 12,-\frac 32, \frac 12, \ell'_4,\ell'_5,\ldots,\ell'_n\right),\;
v_3=\left(\frac 12,\frac 32, \frac 12, \ell'_4,\ell'_5,\ldots,\ell'_n\right),\\
v_4=\left(-\frac 12,\frac 12, \frac 12,\ell'_4,\ell'_5,\ldots,\ell'_n \right)=\frac 12 \left( \left(-\frac 32,-\frac 12,
\frac 12, \ell'_4,\ell'_5,\ldots,\ell'_n\right)+\left(\frac 12,\frac 32, \frac
12, \ell'_4,\ell'_5,\ldots,\ell'_n\right) \right).
\end{gather*}
Then $v_0=\left(\frac 12,-\frac 12, \frac 12,
\ell'_4,\ell'_5,\ldots,\ell'_n\right)=\frac 12 (v_1+v_2)=v_1+v_4-v_3$. In view of the third coordinate, it is clear that it is indeed an~NSS.

\subsection{Coordinates smaller than~1} Now we suppose that all the coordinates of the highest weight are nonintegers and that their absolute value is less than~$1$, i.e. $\lambda=(1/2,\ldots,1/2,\pm 1/2)$, $\lambda \in \{\pi_{n-1},\pi_n\}$. We assume that $\lambda=\pi_n$.

\slu\label{line4} For $n=4$ the set $M(\lambda)$ is a subset of~$M(\pi_4)$ for $B_4$ (see Case~$\ref{sluchaj3}$). Since in the case of~$B_4$ all the subsets are saturated, here it is also true.

Now the aim is to show that for $n=5,\, 6$ the answer is positive, and for $n\geqslant 7$ it is negative. 

\slu\label{line6}  $\lambda=\pi_5=\left(\frac 12,\frac 12,\frac
12,\frac 12,\frac 12\right)$, $n=5$. 
After multiplying by~2
$$
M'(\lambda)=\{(\pm 1,\pm
1,\pm 1,\pm 1,\pm 1)\,|\,\text{even number of minuses}\}.
$$
Let us show that $M'(\lambda)$ is almost unimodular of volume~$16$. To compute the determinant of five arbitrary vectors, write them 
as a matrix and add the first row of this
matrix to all the other rows. Now rows $2$--$5$ are even, hence the volume of the determinant is divisible by~$16$. For the following vectors 
$$
\qmatrix{1&1&1&1&1\\1&-1&-1&-1&-1\\-1&1&-1&-1&-1\\-1&-1&1&-1&-1\\-1&-1&-1&1&-1}
$$
the determinant equals $16$, hence $M'(\lambda)$ is almost unimodular. Notice that every vector has length~$\sqrt 5$. The value of the determinant is at the
same time the volume of the parallelepiped generated by these vectors, and the absolute value of the last number does not exceed ${(\sqrt 5)^5<64}$, 
hence equals $16$, $32$, or $48$.

Letting $m=16$, we obtain that all possible nonzero values of determinants are $\pm m$, $\pm 2m$, or $\pm 3m$.

\begin{lemma}\label{new1}
If for some vectors $v_1,\ldots,v_5\in M'(\lambda)$ the scalar product $(v_1,v_2)=-3$, then 
$$
|\det(v_1,\ldots,v_5)|<3m.
$$
\end{lemma}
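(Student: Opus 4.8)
The plan is to exploit the $W$-invariance of the absolute value of the determinant in order to normalize the first two vectors, and then to reduce the whole computation to the determinant of a single $4\times 4$ matrix with entries $\pm 1$, for which an elementary volume (Hadamard) estimate gives a bound comfortably below $3m$.

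First I would record that for every $w\in W$ one has $|\det(wv_1,\ldots,wv_5)|=|\det(v_1,\ldots,v_5)|$, since the elements of the Weyl group of $D_5$ act by permutations and by sign changes of an even number of coordinates, all of which have determinant $\pm 1$. Moreover $W$ preserves both $M'(\lambda)$ and the scalar product, so the hypothesis $(v_1,v_2)=-3$ survives any such normalization. Because $v_1$ has an even number of negative coordinates, an even sign change carries it to $(1,1,1,1,1)$. With $v_1=(1,1,1,1,1)$ the relation $(v_1,v_2)=\sum_i (v_2)_i = 5-2a = -3$, where $a$ is the number of $-1$'s among the coordinates of $v_2$, forces $a=4$; thus $v_2$ has a single $+1$, and a coordinate permutation (which fixes $v_1$) brings it to the first place, so that $v_2=(1,-1,-1,-1,-1)$.

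Next I would perform the row operation replacing the second row by $v_2+v_1=(2,0,0,0,0)$, which does not change the determinant, and expand along this row. Only its first entry is nonzero, so $|\det(v_1,\ldots,v_5)|=2\,|\det N|$, where $N$ is the $4\times 4$ matrix obtained by deleting the first column and the new second row; explicitly, $N$ has first row $(1,1,1,1)$ and its remaining three rows are the restrictions of $v_3,v_4,v_5$ to the coordinates $2,3,4,5$. Since $v_3,v_4,v_5\in M'(\lambda)\subset\{\pm 1\}^5$, every entry of $N$ equals $\pm 1$. Finally, Hadamard's inequality applied to $N$ gives $|\det N|\leqslant \prod_i \|N_i\| = (\sqrt 4)^4 = 16$, because each row of $N$ has Euclidean length $2$. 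Hence $|\det(v_1,\ldots,v_5)| = 2\,|\det N| \leqslant 32 < 48 = 3m$, which is the desired conclusion.

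As for the main obstacle, there is in fact little genuine difficulty once this reduction is set up; the two points that must be handled with care are, first, that only \emph{even} sign changes belong to $W$, so that the normalization $v_1=(1,1,1,1,1)$ genuinely relies on $v_1$ having an even number of minuses, and second, that after the row operation the surviving $4\times 4$ block is really a $\pm 1$ matrix. The Hadamard estimate is then more than strong enough: it produces the value $32$, not merely some quantity below $3m=48$, so no finer analysis of the almost unimodular structure of $M'(\lambda)$ is needed at this step, and the cases of linearly dependent $v_i$ are automatically covered since then $\det=0$.
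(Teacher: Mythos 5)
Your proof is correct, but it takes a genuinely different route from the paper's. The paper's argument is a two-line geometric estimate applied directly to the five vectors: since every vector of $M'(\lambda)$ has length $\sqrt 5$ and $(v_1,v_2)=-3$, the parallelogram spanned by $v_1,v_2$ has area $S_{12}=\sqrt{25-9}=4$, and then the generalized Hadamard bound $|\det(v_1,\ldots,v_5)|\leqslant S_{12}\cdot(\sqrt 5)^3=20\sqrt 5<48=3m$ finishes the proof -- no normalization, no case analysis, no reduction of the matrix size. You instead use the $W$-invariance of $|\det|$, of $M'(\lambda)$, and of the scalar product to normalize $v_1=(1,1,1,1,1)$ and $v_2=(1,-1,-1,-1,-1)$ (correctly noting that only even sign changes are available, which suffices because $v_1$ has an even number of minuses), then row-reduce to a $4\times 4$ matrix with $\pm 1$ entries and apply the standard Hadamard inequality to get $|\det(v_1,\ldots,v_5)|\leqslant 2\cdot 16=32$. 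Each approach buys something: the paper's is shorter and requires no normalization apparatus, while yours yields the sharper bound $2m=32$ rather than merely a value below $3m$ (which, combined with the divisibility of all determinants by $m=16$, pins the possible values down to $0$, $\pm m$, $\pm 2m$ under the hypothesis $(v_1,v_2)=-3$) -- information the paper only extracts later through separate lemmas. Both proofs handle degenerate (linearly dependent) tuples trivially, and both are complete.
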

\begin{proof}
Each vector from $M'(\lambda)$ has length $\sqrt 5$. Let $S_{12}$ be the area of the parallelogram generated by vectors $v_1$ and $v_2$. 
Since $(v_1,v_2)=-3$, we have $S_{12}=4$. From geometrical reasons $|\det(v_1,\ldots,v_5)|\leqslant S_{12}\cdot (\sqrt 5)^3<48=3m$.
\end{proof}

\begin{lemma}\label{new2}
Let distinct vectors $v_1,\ldots,v_6\in M'(\lambda)$ be such that $|\det(v_1,\ldots,v_5)|=3m$.
\begin{enumerate}
\item 
Up to the permutation of lines and up to the simultaneous sign change in pairs of~columns,
$$
\qmatrix{
v_1 \\
v_2 \\
v_3 \\
v_4 \\
v_5
}
=
\qmatrix{
1&1&1&1&1 \\
-1&-1&1&1&1 \\
-1&1&-1&1&1 \\
-1&1&1&-1&1 \\
-1&1&1&1&-1}.
$$
\item 
$|\det(v_1,v_2,v_3,v_4,v_6)| \neq 3m$.
\end{enumerate}
\end{lemma}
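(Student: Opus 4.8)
The plan is to reduce both parts to one combinatorial fact about scalar products. Any two vectors of $M'(\lambda)$ carry an even number of $-1$'s, so they differ in an even number $d\in\{0,2,4\}$ of coordinates; for distinct vectors $d\in\{2,4\}$, giving scalar product $5-2d\in\{1,-3\}$. Combined with Lemma~\ref{new1}, this shows that $|\det(v_1,\ldots,v_5)|=3m$ forces $(v_i,v_j)=1$ for all $i\ne j$: if some pair had product $-3$, the determinant would be strictly smaller, so the only remaining value is $1$. (Conversely, the Gram matrix of such a five-tuple equals $4I+J$, whose determinant is $9\cdot 4^4=48^2$, so the all-ones condition indeed produces $|\det|=3m$; only this one direction is needed below.)

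For part (1) I would first use sign changes in pairs of columns to normalize $v_1$ to $(1,1,1,1,1)$, which is possible because $v_1$ has an even number of $-1$'s. Since $(v_i,v_1)$ is the coordinate sum of $v_i$, the condition $(v_i,v_1)=1$ forces each $v_i$, $i\ge 2$, to have exactly two $-1$'s; write $A_i\subset\{1,\ldots,5\}$ for the pair of positions carrying these $-1$'s. The remaining conditions $(v_i,v_j)=1$ translate into $|A_i\cap A_j|=1$, so $A_2,A_3,A_4,A_5$ are four distinct $2$-element sets pairwise meeting in exactly one index. An intersecting family of distinct $2$-element sets is either a triangle or a star, and a triangle has only three members; hence the four sets share a common index $c$, and since $\{1,\ldots,5\}\setminus\{c\}$ has four elements, the partners exhaust all remaining indices. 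Finally, flipping the pair of columns $\{c,1\}$ moves the common index to $1$ (turning the vector whose partner was $1$ into $(1,1,1,1,1)$), and a permutation of rows brings the matrix to the displayed normal form.

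For part (2) I would argue that a sixth distinct vector $v_6$ with $|\det(v_1,v_2,v_3,v_4,v_6)|=3m$ would, by the same reduction, satisfy $(v_6,v_i)=1$ for $i=1,2,3,4$. Taking $v_1,\ldots,v_5$ in the normal form of part (1), the condition $(v_6,v_1)=1$ again forces $v_6$ to have exactly two $-1$'s, at a pair $B$, and the conditions for $i=2,3,4$ give $|B\cap\{1,2\}|=|B\cap\{1,3\}|=|B\cap\{1,4\}|=1$. A short case check (according to whether $1\in B$) shows the only possibility is $B=\{1,5\}$, that is $v_6=v_5$, contradicting the assumed distinctness. Hence no such $v_6$ exists.

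The main obstacle, requiring the most care, is the bookkeeping of symmetries: one must check that the sign changes in pairs of columns — and, for part (2), the residual $S_5$-symmetry of the five-point configuration that lets us fix which four vectors are kept — are realized inside the Weyl group preserving $M'(\lambda)$, so that normalizing the five-tuple neither destroys the membership $v_6\in M'(\lambda)$ nor changes the absolute value of the determinant. The intersecting-family classification and the parity argument for the scalar products are elementary, so once these reductions are set up correctly, both parts follow from the finite case checks above.
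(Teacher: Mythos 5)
Your proposal is correct and takes essentially the same route as the paper: both use Lemma~\ref{new1} to force every pair among the five vectors to differ in exactly two coordinates (scalar product $1$), normalize one vector to $(1,1,1,1,1)$ by sign changes in pairs of columns, and finish with an elementary classification of the positions of the $-1$'s --- your star-versus-triangle argument for intersecting $2$-sets replaces the paper's pigeonhole on ``prefix'' coordinates, and your case check in part (2) matches the paper's. The symmetry bookkeeping you flag (in particular, that after normalization the vector discarded in part (2) might be the all-ones one rather than $v_5$) is a real point, but the paper glosses over it as well; it is settled by exhibiting the Weyl group element that negates columns $1$ and $2$ and then transposes them, which exchanges the all-ones vector with $(-1,-1,1,1,1)$ while fixing the other three vectors of the normal form.
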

\begin{proof}
(i) It follows from Lemma~$\ref{new1}$ that no two of these vectors differ in four coordinates. Hence, any two of these vectors differ exactly in~$2$ coordinates.
Without loss of generality $v_1=(1,1,1,1,1)$ and $v_2=(-1,-1,1,1,1)$. Then each of the three other vectors has exactly two $(-1)$s. Say that two first coordinates 
are {\it prefix}. To differ from $v_2$ in exactly two coordinates, each of the remaining vectors must have exactly one prefix coordinate equal to~$(-1)$. 
It follows from the pigeonhole principle that two of them (say, $v_3$ and $v_4$) have the same prefix coordinate equal to~$(-1)$, without loss of 
generality this is the first coordinate. Then the first coordinate of $v_5$ also equals $(-1)$, otherwise $v_5$ cannot differ simultaneously with $v_2$, $v_3$, and~$v_4$ in 
two coordinates. Since all the vectors are pairwise distinct, we obtain the same set as in the formulation of the Lemma.

(ii) Suppose the contrary. Then it follows from Lemma~$\ref{new1}$ that $v_6$ has to have two $(-1)$s. Now in the 5-tuple $(v_1,v_2,v_3,v_4,v_6)$ the vector $v_6$ cannot simultaneously differ with $v_2$, $v_3$, and~$v_4$ in two coordinates: 
up to symmetry, it is either $(1,-1,-1,1,1)$, or $(1,-1,1,1,-1)$.
\end{proof}

\begin{lemma}\label{lemma2m}
Let $v_1,\ldots,v_6\in M'(\lambda)$ be such that all the absolute values of their nonzero determinants are greater than $m$. 
Then all these determinants equal~$\pm 2m$.
\end{lemma}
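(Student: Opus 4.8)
The plan is to argue by contradiction, showing that no determinant among the six vectors can equal $\pm 3m$. Since the only possible nonzero values of a $5\times5$ determinant in $M'(\lambda)$ are $\pm m$, $\pm 2m$, $\pm 3m$ (as established above) and the hypothesis already excludes $\pm m$, this yields the claim. So suppose some determinant equals $\pm 3m$; after relabelling I would assume $|\det(v_1,\ldots,v_5)|=3m$ and, using Lemma~\ref{new2}\,(i), bring $v_1,\ldots,v_5$ to the standard matrix displayed there (permutations of the vectors and simultaneous sign changes in pairs of coordinates preserve $M'(\lambda)$ and the absolute value of every determinant). As in Lemma~\ref{new2}, I take the six vectors to be pairwise distinct; the aim is then to pin down the remaining vector $v_6$ and reach an impossibility.

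The bridge between the six determinants and the position of $v_6$ is Cramer's rule. Since $\det(v_1,\ldots,v_5)\neq0$, the vectors $v_1,\ldots,v_5$ form a basis, so I write $v_6=a_1v_1+\ldots+a_5v_5$; by Lemma~\ref{kramer} (the determinants in $M'(\lambda)$ being $m\cdot\{1,2,3\}$ and this basis having determinant $3m$, so $a=3$) each coefficient lies in $\{0,\pm\tfrac13,\pm\tfrac23,\pm1\}$. Replacing $v_k$ by $v_6$ multiplies the determinant by $a_k$, so $|\det(v_1,\ldots,\widehat{v_k},\ldots,v_5,v_6)|=3m\,|a_k|$ for each $k$. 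Thus the hypothesis that every nonzero determinant exceeds $m$ forces $|a_k|\in\{0,\tfrac23,1\}$ (the value $\tfrac13$ is ruled out). Next I would invoke Lemma~\ref{new2}\,(ii): using the permutations of the coordinates $2,3,4,5$, which fix $v_1$, permute $v_2,\ldots,v_5$, and carry $v_6$ to another admissible distinct sixth vector, none of the four determinants obtained by dropping $v_2,v_3,v_4$, or $v_5$ can equal $3m$. Hence $a_k\in\{0,\pm\tfrac23\}$ for $k=2,3,4,5$; note that no information about dropping $v_1$ is needed.

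Finally I would show that these constraints leave no room for $v_6$. In the standard form the coordinates of $v_6$ are $(v_6)_1=a_1-S$ and $(v_6)_j=a_1+S-2a_j$ for $j\in\{2,3,4,5\}$, where $S=a_2+a_3+a_4+a_5$. Because each coordinate is $\pm1$, the difference of any two of them lies in $\{-2,0,2\}$; for two indices $j,j'\in\{2,3,4,5\}$ this reads $-2(a_j-a_{j'})\in\{-2,0,2\}$, so $a_j-a_{j'}\in\{-1,0,1\}$, which together with $a_j,a_{j'}\in\{0,\pm\tfrac23\}$ forces $a_2=a_3=a_4=a_5=:a$. Comparing the first coordinate with any other then gives $6a\in\{-2,0,2\}$, hence $a=0$; thus $v_6=a_1(1,\ldots,1)$ with $a_1=\pm1$. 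Either $v_6=(1,\ldots,1)=v_1$, contradicting distinctness, or $v_6=(-1,\ldots,-1)$, which has an odd number of minus signs and so does not belong to $M'(\lambda)$. This contradiction finishes the argument.

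I expect the main obstacle to be the symmetry step: carefully checking that Lemma~\ref{new2}\,(ii) applies to each of the four drops $v_2,\ldots,v_5$, i.e. that the relevant coordinate permutation genuinely preserves the standard configuration as a set and carries $v_6$ to a distinct sixth vector, so that the off-diagonal coefficients are confined to $\{0,\pm\tfrac23\}$. Once that is in place, both the coefficient bound coming from Lemma~\ref{kramer} and the concluding coordinate computation are routine.
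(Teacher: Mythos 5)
Your proposal is correct, but it takes a genuinely different route from the paper's own proof. The paper never normalizes the $3m$-tuple: it observes that if some determinant equals $\pm 3m$, then Lemma~\ref{new2}\,(ii) (applied after relabelling, since any two $5$-element subsets of the six vectors share four of them) together with the hypothesis forces every other nonzero determinant to be $0$ or $\pm 2m$; it then uses the identity expressing $\det(v_1-v_2,v_1-v_3,\ldots,v_1-v_6)$ as an alternating sum of the six $5$-tuple determinants. All entries of that difference matrix are even, so this determinant is divisible by $2^5=32=2m$, whereas the alternating sum is one term $\pm 3m$ plus multiples of $2m$, hence $\equiv m \pmod{2m}$ --- a contradiction. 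You instead invoke Lemma~\ref{new2}\,(i) to put the $3m$-tuple in standard form, expand $v_6$ in that basis via Cramer's rule (Lemma~\ref{kramer}), eliminate the coefficient values $\pm\tfrac13$ by the hypothesis and $\pm 1$ (for $k=2,\ldots,5$) by Lemma~\ref{new2}\,(ii), and then a coordinate computation pins $v_6$ down to $\pm(1,1,1,1,1)$, both options being impossible (one by distinctness, the other by the even-sign condition defining $M'(\lambda)$, which your normalizing transformations indeed preserve). Your worry about the symmetry step is unfounded and can even be simplified: Lemma~\ref{new2}\,(ii) concerns arbitrary distinct six vectors and its hypothesis is invariant under permuting $v_1,\ldots,v_5$, so pure relabelling (no coordinate permutations at all) already gives $|a_k|\neq 1$ for every $k$, including $k=1$. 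The trade-off: the paper's divisibility trick is shorter and needs only part (ii) of Lemma~\ref{new2}, while your argument is more explicit and shows concretely that no admissible sixth vector can coexist with a $3m$-configuration. One shared caveat: both proofs (and the lemma itself, as it is later applied to six-element subsets of an ENSS) tacitly require the six vectors to be pairwise distinct --- with repetitions allowed, the standard $3m$-tuple with a repeated vector would be a counterexample --- so your explicit distinctness assumption matches the paper's implicit one.
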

\begin{proof}
On the contrary, suppose that there is a determinant equal to~$\pm 3m$. It follows from Lemma~$\ref{new2}$ that all the other nonzero determinants equal 
$\pm 2m$. But the alternating sum of six determinants of $5$-tuples of our vectors equals $\det(v_1-v_2,v_1-v_3,\dots,v_1-v_6)$. In the corresponding matrix
all the entries are even, hence the determinant is divisible by $32=2m$. Contradiction with the fact that $3m\pm 2m\pm \ldots\pm 2m$ is not divisible by~$2m$.
\end{proof}

Consider an ENSS $\{v_0; v_1,v_2,\ldots,v_s\}$. If the rank~$d$ of this set is less than~$5$, then add ${5-d}$ vectors from $M'(\lambda)$ to make the rank equal to~$5$. 
Now suppose that this ENSS is
$\{v_0; v_1,v_2,v_3,v_4, v_5,\ldots,v_s\}$, and only
$v_1,v_2,v_3,v_4,v_5$ appear in the $\Qg0$-combination
(maybe with zero coefficients). We may compute all the determinants of the form
$\det (v_1,\ldots,\widehat v_i,\ldots, \\ v_5,v_j)$, where one of the first $5$ vectors is thrown out and one new vector is added instead of it. 
Case~a): one of these determinants equals~$\pm m$, case b): for every nonzero determinant its absolute value is greater than~$m$.

In case b) it follows from Lemma~$\ref{lemma2m}$ that we have $s-5$ unimodular six-element subsets 
$\{v_1,\ldots,v_5,v_j\}$, $6\leqslant j\leqslant s$,  
of volume $m'=2m$. In each of them $v_j$ can be expressed in $v_1,\ldots,v_5$ with integer coefficients, 
hence the determinant of each $5$-tuple in the set $\{v_1,\ldots,v_s\}$ is divisible by $2m$, hence equals~$\pm 2m$. This ENSS
is hereditarily normal by Theorem~$\ref{lemmaunimodsverhnas}$, a contradiction.

Case a) needs more punctuality. It follows from Lemma~$\ref{nemin}$ that the determinant~$\pm m$ does not coincide with
$\det(v_1,v_2,\ldots,v_5)$. Without loss of generality $\det(v_1,\ldots,v_4,v_6)=16$ (if it equals $-16$, then transpose two first vectors, and 
the determinant will change sign). By our assumption $\det(v_1,\ldots,v_5)=\pm 2m \text{ or }\pm 3m$.

\begin{lemma}\label{l17}
There are no vectors $w_1,\ldots,w_6$ in $M'(\lambda)$ such that the following is true (simultaneously): $\det (w_1,\ldots,w_5)=\pm 2m$, 
$\det (w_5,w_2,w_3,w_4,w_6)=\pm 2m$, these determinants have different signs, and $\det(w_1,\ldots,w_4,w_6)=\pm m$.
\end{lemma}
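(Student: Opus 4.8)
The statement to be proved (Lemma~\ref{l17}) is a non-existence
claim about six vectors $w_1,\ldots,w_6$ in $M'(\lambda)$ for $D_5$,
where $M'(\lambda)$ consists of the $16$ vectors $(\pm 1,\ldots,\pm 1)$ with an even
number of minus signs. The constraints are: $\det(w_1,\ldots,w_5)=\pm 2m$ and
$\det(w_5,w_2,w_3,w_4,w_6)=\pm 2m$ have opposite signs, while
$\det(w_1,\ldots,w_4,w_6)=\pm m$. My plan is to argue by contradiction and
extract strong combinatorial rigidity from the hypothesis that a determinant
equals the minimal value $\pm m$ (recall $m=16$, and the possible nonzero
determinants are $\pm m,\pm 2m,\pm 3m$). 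The key observation I would exploit
is that, up to the $W$-action (permutations of coordinates together with even
sign changes) and the freedom to rescale rows by $-1$, the value $\pm m$ pins
down the mutual Hamming distances between the five vectors $w_1,w_2,w_3,w_4,w_6$
very tightly, just as in Lemma~\ref{new1} and Lemma~\ref{new2} where the
$\pm 3m$ case forces all pairwise differences to be exactly two coordinates.

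First I would normalize: since $\det(w_1,\ldots,w_4,w_6)=\pm m$ is the smallest
volume, I can bring $\{w_1,w_2,w_3,w_4,w_6\}$ into a canonical form. Concretely,
I expect that $\det=\pm m$ forces these five vectors to be a unimodular (volume
$m$) configuration, so by Corollary~\ref{razlozhenie} every other vector of
$M'(\lambda)$ decomposes over them with integer coefficients; in particular
$w_5$ has integer coordinates in the basis $(w_1,w_2,w_3,w_4,w_6)$. The next
step is to use this integrality to control the two volume-$2m$ determinants.
Writing $w_5=\sum_i c_i w_i$ (with $c_i\in\ZZ$, where the sum runs over
$w_1,w_2,w_3,w_4,w_6$), I would compute $\det(w_1,w_2,w_3,w_4,w_5)$ and
$\det(w_5,w_2,w_3,w_4,w_6)$ by substituting this expansion: each becomes an
integer multiple of $m$ determined by a single coefficient $c_i$. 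Specifically
$\det(w_1,w_2,w_3,w_4,w_5)=c_{w_6}\cdot\det(w_1,w_2,w_3,w_4,w_6)=\pm c_{w_6}m$
and $\det(w_5,w_2,w_3,w_4,w_6)=c_{w_1}\cdot\det(w_1,w_2,w_3,w_4,w_6)=\pm c_{w_1}m$
(up to an overall sign from the row reorderings). The hypotheses then force
$c_{w_6}=\pm 2$ and $c_{w_1}=\pm 2$, and the opposite-sign condition fixes the
relative sign of these two coefficients.

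From here the contradiction should come from examining the parity/length
structure of $M'(\lambda)$. Having $w_5=c_{w_1}w_1+\cdots+c_{w_6}w_6$ with two
of the integer coefficients equal to $\pm 2$, I would compute
$\|w_5\|^2=5$ and compare it against the squared length forced by the
expansion, using that the five basis vectors are ``nearly orthogonal'' in the
unimodular configuration. The point is that a $\pm 2$ coefficient inflates the
norm too much: combining the volume-$m$ (unimodular) geometry with the fixed
norm $\sqrt 5$ of every vector in $M'(\lambda)$, I expect that coefficients of
absolute value $2$ in an integral decomposition over a volume-$m$ basis are
impossible, or force one of the other determinants down to $\pm 3m$ in violation
of the sign hypothesis. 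Alternatively, and perhaps more cleanly, I would feed
the explicit normalized form of $(w_1,w_2,w_3,w_4,w_6)$ into a direct
case check of which $w_5\in M'(\lambda)$ can simultaneously yield two
determinants of value $2m$ with opposite signs, in the spirit of the
pigeonhole argument of Lemma~\ref{new2}(ii).

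\textbf{Expected main obstacle.} The delicate part is the sign bookkeeping: the
hypothesis is not merely that both $2m$-determinants have absolute value $2m$,
but that they carry \emph{opposite} signs while the shared $m$-determinant is
fixed. Getting the contradiction genuinely requires tracking how swapping $w_1$
for $w_6$ (i.e. passing between the two $5$-tuples through the common
$4$-subset $w_2,w_3,w_4$ and the pivot $w_5$) affects orientation. I anticipate
that the cleanest route is to normalize $(w_1,w_2,w_3,w_4,w_6)$ to an explicit
volume-$m$ matrix, solve for the unique (up to the residual symmetry fixing this
frame) candidates $w_5$ giving $|\det|=2m$ on both sides, and then observe that
every such candidate yields the \emph{same} sign on the two determinants — so the
opposite-sign requirement cannot be met. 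The risk is that the residual symmetry
group is large enough to make the explicit enumeration tedious; I would control
it by first reducing via Lemma~\ref{new2}(i)-type arguments to a single
canonical frame before enumerating.
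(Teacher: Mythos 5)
Your opening reduction is correct and is genuinely the right way to organize the hypotheses: since $M'(\lambda)$ is almost unimodular of volume $m$ and $\det(w_1,\ldots,w_4,w_6)=\pm m$, Corollary~\ref{razlozhenie} gives an integer expansion $w_5=c_1w_1+c_2w_2+c_3w_3+c_4w_4+c_6w_6$, and multilinearity yields $\det(w_1,\ldots,w_5)=c_6\det(w_1,\ldots,w_4,w_6)$ and $\det(w_5,w_2,w_3,w_4,w_6)=c_1\det(w_1,\ldots,w_4,w_6)$, so the lemma is equivalent to the statement that no vector of $M'(\lambda)$ expands over a volume-$m$ frame with two coefficients $+2$ and $-2$. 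This matches how the lemma is actually consumed later in the paper. Be aware, though, that the paper itself gives no structural argument here: its proof of this lemma is literally a Maple verification, so the entire mathematical content is the finite check you still owe.

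Both of your proposed ways of finishing that check have genuine gaps. The norm route rests on a false expectation: a single coefficient of absolute value $2$ over a volume-$m$ frame is perfectly possible. Take the frame $r_1=(1,1,1,1,1)$, $b_i$ the vector with $+1$ in position $i$ and $-1$ elsewhere, $i=1,2,3,4$ (determinant $16=m$); then $(1,1,1,-1,-1)=2r_1+b_1+b_2+b_3$ lies in $M'(\lambda)$. So nothing as coarse as ``a coefficient $\pm 2$ is impossible'' can work; what must be excluded is two such coefficients of opposite signs, a far more delicate fact. The enumeration route is worse off: you assume that $\det=\pm m$ pins down the Hamming-distance pattern and that one can ``reduce to a single canonical frame.'' That is false. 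The frames $(r_1,b_1,b_2,b_3,b_4)$ and $(r_1,b_1,b_2,b_3,a)$ with $a=(1,1,-1,1,-1)$ both have determinant $16$, yet their graphs of pairs at Hamming distance $4$ are, respectively, the star $K_{1,4}$ centered at $r_1$ and the tree with edges $r_1b_1$, $r_1b_2$, $r_1b_3$, $b_3a$; since coordinate permutations and even sign changes preserve Hamming distances, no symmetry maps one frame to the other, so there are at least two inequivalent orbits of volume-$m$ frames to enumerate (and you have not classified them --- this classification is exactly what the paper delegates to the computer). Finally, a smaller but real slip: for $n=5$ there is no ``freedom to rescale rows by $-1$,'' because $-v$ has an odd number of minus signs and hence $-v\notin M'(\lambda)$; moreover, negating $w_1$ alone would flip $\det(w_1,\ldots,w_5)$ and $\det(w_1,\ldots,w_4,w_6)$ but not $\det(w_5,w_2,w_3,w_4,w_6)$, destroying precisely the opposite-sign condition you need to track. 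This is why Lemma~\ref{new2} is stated only up to row permutations and sign changes in \emph{pairs of columns}.
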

\begin{proof}
Straightforward check using software Maple~7, \cite{Maple}. 
\end{proof}

\begin{lemma}\label{l18}
There are no vectors $w_1,\ldots,w_6$ in $M'(\lambda)$ such that 
$$
\det (w_1,\ldots,w_5)=-2m \text{ and } \det(w_1,\ldots,w_4,w_6)=-3m.
$$
\end{lemma}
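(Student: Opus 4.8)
The plan is to combine the classification of the $5$-tuples of determinant $\pm 3m$ from Lemma~\ref{new2}(i) with a one-line adjugate computation, after which the statement collapses to a parity check inside $M'(\lambda)$.

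Since $\det(w_1,w_2,w_3,w_4,w_6)=-3m\neq 0$, the vectors $w_1,w_2,w_3,w_4,w_6$ realize the maximal absolute value $3m$, so by Lemma~\ref{new2}(i) a product of simultaneous sign changes in pairs of coordinates --- an element of $W$ of determinant $+1$, hence preserving every $5$-tuple determinant --- carries the set $\{w_1,w_2,w_3,w_4,w_6\}$ onto the set of rows $a_1,\dots,a_5$ of the matrix displayed there. After this move $w_6$ is one fixed row $a_j$ and $(w_1,w_2,w_3,w_4)$ is an ordering of the other four rows; the requirement $\det(w_1,w_2,w_3,w_4,w_6)=-3m$ (rather than $+3m$) pins this ordering down up to an even permutation, which does not affect $\det(w_1,w_2,w_3,w_4,w_5)$.

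The crucial observation is that with $w_1,\dots,w_4$ now fixed the assignment $x\mapsto\det(w_1,w_2,w_3,w_4,x)$ is a single linear functional $x\mapsto c\cdot x$, where $c$ is the integer adjugate vector orthogonal to $w_1,\dots,w_4$ and normalized by $c\cdot w_6=-3m$. Solving the four orthogonality equations produces $c$ explicitly: for $w_6=a_5$ one obtains $c=(8,-8,-8,-8,16)$, and for $w_6=a_1$ one obtains $c=(-16,-8,-8,-8,-8)$, the remaining choices $w_6\in\{a_2,a_3,a_4\}$ being coordinate permutations of the first. The condition $\det(w_1,w_2,w_3,w_4,w_5)=-2m$ now reads as the single scalar equation $c\cdot w_5=-2m$ in the $\pm 1$ entries of $w_5$; for $w_6=a_5$ this is $x_1-x_2-x_3-x_4+2x_5=-4$. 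A direct inspection of the few $\pm 1$ solutions shows that each of them has an odd number of negative coordinates, so none lies in $M'(\lambda)$; the case $w_6=a_1$ is identical. This contradiction proves the lemma.

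The computations are elementary, and the only real obstacle is the sign bookkeeping in the reduction: permuting $w_1,\dots,w_4$ flips the signs of both determinants at once, and the transpositions one might use to pass between the rows $a_2,\dots,a_5$ reverse the sign of every $5$-tuple determinant, so one must check that the reduced equation is again $c\cdot w_5=-2m$ and not $c\cdot w_5=+2m$ --- the cleanest safeguard is to run all five choices of $w_6$ directly. It is worth stressing where the hypotheses enter: the equality of signs in ``$-2m$ and $-3m$'' is exactly what forces the parity of the negative coordinates of the would-be $w_5$ to be odd, so the result would fail without the defining ``even number of minuses'' constraint of $M'(\lambda)$.
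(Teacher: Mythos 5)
Your proof is correct, and it shares its skeleton with the paper's: both invoke Lemma~\ref{new2}(i) to put the $5$-tuple $\{w_1,w_2,w_3,w_4,w_6\}$ of determinant $\pm 3m$ into standard form, both then study the linear functional vanishing on $\langle w_1,\ldots,w_4\rangle$, and both derive the contradiction from the ``even number of minuses'' constraint defining $M'(\lambda)$. The difference is in the finish, and the two finishes are dual to one another. The paper pushes the symmetry further: it assumes $w_6=(1,1,1,1,1)$ with $(w_1,\ldots,w_4)$ the other four rows, uses only the \emph{signs} of the two determinants to place $w_5$ on the positive side of the hyperplane $2x_1+x_2+x_3+x_4+x_5=0$ (hence $w_5$ has exactly two $-1$s), reduces to $w_5=(1,-1,-1,1,1)$, and evaluates one determinant, getting $-m\neq-2m$; so the paper lists the admissible elements of $M'(\lambda)$ and rules them out by the determinant value. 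You instead list the exact solutions of the determinant equation $c\cdot w_5=-2m$ and rule them out by parity. Your two displayed adjugate vectors are correct, as are the resulting equations and their $\pm 1$ solutions (all of odd parity), and the reduction of $w_6\in\{a_2,a_3,a_4\}$ to the case $w_6=a_5$ is sound: the permutations of coordinates $2,\ldots,5$ fix $a_1$, permute $a_2,\ldots,a_5$, and commute with the normalization $c\cdot w_6=-3m$, so your ``safeguard'' of running all five cases is in fact unnecessary. What each route buys: the paper's is computationally lighter, but its opening step ``we may assume $w_6=(1,1,1,1,1)$'' silently uses a determinant~$-1$ element of $W$ compensated by an odd relabelling of $w_1,\ldots,w_4$ --- precisely the sign bookkeeping you flag --- whereas your normalization of the functional by $c\cdot w_6=-3m$ makes the argument insensitive to orderings and to which row $w_6$ is, at the cost of two explicit linear solves.
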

\begin{proof}
Using Lemma~$\ref{new2}$, we may assume that $w_6=(1,1,1,1,1)$ and
$$
\qmatrix{
w_1 \\
w_2 \\
w_3 \\
w_4 
}
=
\qmatrix{
-1&-1&1&1&1 \\
-1&1&-1&1&1 \\
-1&1&1&-1&1 \\
-1&1&1&1&-1}. 
$$
The hyperplane $\langle w_1,w_2,w_3,w_4 \rangle$ is defined by the equation $2x_1+x_2+x_3+x_4+x_5=0$. Since $\det (w_1,\ldots,w_5)<0$ 
and $\det (w_1,\ldots,w_6)<0$, the vectors $w_5$ and $w_6$ belong to the same half-space with respect to this hyperplane. Hence,
exactly two coordinates of $w_5$ equal~$-1$. Without loss of generality $w_5=(1,-1,-1,1,1)$, but the corresponding determinant
equals~$-16=-m$, a contradiction.
\end{proof}

Now let us re-consider the ENSS. Analyze the following decompositions in basis: $v_0$ in the basis $v_1,\ldots,v_5$ 
and in the basis $v_1,\ldots,v_4,v_6$, and $v_5$ in the basis $v_1,\ldots,v_4,v_6$. Let
\begin{gather*}
v_0=q_1v_1+\ldots+q_5v_5 \mbox{ be the initial } \QQ_{\geqslant
0}\mbox{-combination, and }\\
v_0=z_1v_1+\ldots+z_4v_4+z_6v_6 \mbox{ be the initial }
\ZZ\mbox{-combination,}\\
v_5=y_1v_1+\ldots+y_4v_4+y_6v_6, \quad y_i\in\{0,\pm 1, \pm 2, \pm
3\}\; -
\end{gather*} 
just the decomposition in the basis. Then 
\begin{gather*}
v_6=-\frac{y_1}{y_6}v_1-\ldots-\frac{y_4}{y_6}v_4+\frac1{y_6}v_5\quad
\Rightarrow \\
v_0=z_1v_1+z_2v_2+z_3v_3+z_4v_4+z_6\Bigl(-\frac{y_1}{y_6}v_1-\ldots-\frac{y_4}{y_6}v_4+\frac1{y_6}v_5\Bigr).
\end{gather*}
From the uniqueness of the decomposition in the basis it follows that
\begin{gather*}
q_1=z_1-z_6\frac{y_1}{y_6},\ldots,q_4=z_4-z_6\frac{y_4}{y_6}, 
q_5=z_6\frac1{y_6}, \mbox{ all }q_i\in[0,1).
\end{gather*}

If $|y_6|=3$, i.e. $|\det(v_1,v_2,\ldots,v_5)|=3m$, then by Lemma~$\ref{new2}$
$$
\qmatrix{
v_1 \\
v_2 \\
v_3 \\
v_4 \\
v_5
}
=
\qmatrix{
1&1&1&1&1 \\
-1&-1&1&1&1 \\
-1&1&-1&1&1 \\
-1&1&1&-1&1 \\
-1&1&1&1&-1}. 
$$
The linear combination of these vectors with $\Qg0$-coefficients 
$q_1,\ldots$, $q_5$ belongs to the weight lattice multiplied by two, hence, all the coordinates of the resulting vector have the same parity. Subtracting
the third coordinate from the second one, we obtain that $2(q_2-q_3)$ is even, which implies
$q_2=q_3$, and analogously $q_2=q_3=q_4=q_5$. The first coordinate of~$v_0$ equals $q_1-4q_2$, while all the others 
equal $q_1+2q_2$. These numbers also have the same parity, consequently, $q_2\in\{0,\frac 13, \frac 23\}$. Since $q_1-4q_2$ and $q_1+2q_2$ are both integers
and cannot simultaneously equal $0$, we obtain that $q_1=q_2\in\{\frac 13, \frac
23\}$. Hence, $v_0$ equals either $(-1,1,1,1,1)$ or $(-2,2,2,2,2)$.

\begin{lemma}
Let $(v_1,v_2,v_3,v_4,v_5)$ be from Lemma~$\ref{new2}$, and let $v_6\in M'(\lambda)\setminus \{v_1,\ldots,v_5\}$. 
Then the vector $(-1,1,1,1,1)$ can be represented as a $\ZZ_{\geqslant
0}$-combination of vectors $v_1$, $v_2$, $v_3$, $v_4$, $v_5$, $v_6$.
\end{lemma}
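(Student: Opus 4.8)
The plan is to exploit the large symmetry of the configuration $\{v_1,\ldots,v_5\}$ and then reduce the statement to a short finite case check. First I would record the identity $v_0=(-1,1,1,1,1)=\frac13(v_1+v_2+v_3+v_4+v_5)$, equivalently $v_1+v_2+v_3+v_4+v_5=3v_0$; this is the combination already produced in the text, and it fails to have integer coefficients only because of the factor $\frac13$. Note also that $v_0$ itself is not in $M'(\lambda)$, since it has an odd number of $-1$'s, so in particular $v_6\neq v_0$.

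The key observation is that the permutations of the last four coordinates lie in the Weyl group $W$ (they act by permuting the coordinate set), they fix both $v_1=(1,1,1,1,1)$ and $v_0$, and they permute $v_2,v_3,v_4,v_5$ among themselves. Hence such a permutation preserves the whole datum $(v_0;v_1,\ldots,v_5)$, and any $\ZZ_{\geqslant 0}$-representation of $v_0$ for a given $v_6$ transfers, coordinate-permuted, to a representation for any $v_6$ lying in the same $S_4$-orbit. Therefore it is enough to treat one representative $v_6$ in each orbit.

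Next I would classify the admissible vectors $v_6$. Writing a vector of $M'(\lambda)$ according to the sign of its first coordinate and the number $k$ of $-1$'s among its last four coordinates, evenness of the total number of minuses forces either first coordinate $+1$ with $k\in\{0,2,4\}$, or first coordinate $-1$ with $k\in\{1,3\}$. Discarding $v_1$ (the case $k=0$) and $v_2,\ldots,v_5$ (first coordinate $-1$, $k=1$), the remaining elements of $M'(\lambda)\setminus\{v_1,\ldots,v_5\}$ fall into exactly three $S_4$-orbits, with representatives $(1,-1,-1,1,1)$, $(1,-1,-1,-1,-1)$, and $(-1,-1,-1,-1,1)$.

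Finally I would exhibit an explicit nonnegative integer combination in each case, to be verified by a one-line coordinate computation: for $v_6=(1,-1,-1,1,1)$ one gets $v_0=v_4+v_5+v_6$; for $v_6=(1,-1,-1,-1,-1)=-v_0$ one uses $v_1+\cdots+v_5=3v_0$ to obtain $v_0=v_1+v_2+v_3+v_4+v_5+2v_6$; and for $v_6=(-1,-1,-1,-1,1)$ one gets $v_0=v_1+v_5+v_6$. The computations themselves are routine; the only genuinely delicate point is the orbit bookkeeping, namely confirming that these three representatives really exhaust $M'(\lambda)\setminus\{v_1,\ldots,v_5\}$ and that the vectors $v_i$ appearing in each combination track the coordinate permutation correctly when passing from a representative to an arbitrary $v_6$ of its orbit.
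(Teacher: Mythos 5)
Your proof is correct and takes essentially the same approach as the paper: the paper also reduces, up to a permutation of indices, to three representatives for $v_6$ (namely $(1,1,1,-1,-1)$, $(1,-1,-1,-1,-1)$, and $(-1,-1,-1,-1,1)$) and exhibits explicit $\ZZ_{\geqslant 0}$-combinations that coincide with yours after relabeling. Your careful $S_4$-orbit bookkeeping merely makes explicit what the paper compresses into the phrase ``up to a permutation of indices.''
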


\begin{proof}
Up to a permutation of indices, $v_6$ is either $(1,1,1,-1,-1)$, or $(1,-1,-1,-1,-1)$, or $(-1,-1,-1,-1,1)$.
Consider these cases separately.

\noindent a)
$v_6=(1,1,1,-1,-1)$. Then
$$
(-1,1,1,1,1)=(1,1,1,-1,-1)+(-1,-1,1,1,1)+(-1,1,-1,1,1).
$$

\noindent b)
$v_6=(1,-1,-1,-1,-1)$. Then
\begin{multline*}
(-1,1,1,1,1)=2(1,-1,-1,-1,-1)+(1,1,1,1,1)+ (-1,-1,1,1,1)+\\
+(-1,1,-1,1,1)+(-1,1,1,-1,1)+(-1,1,1,1,-1).
\end{multline*}

\noindent c)
$v_6=(-1,-1,-1,-1,1)$. Then $(-1,1,1,1,1)=(-1,-1,-1,-1,1)+(1,1,1,1,1)+(-1,1,1,1,-1)$. 
\end{proof}

It remains to consider the case $|y_6|=2$. Here all $q_i\in\{0,\frac12\}$.

If $z_6$ is even, then $q_1=z_1-z_6\frac{y_1}{y_6}=z_1-y_1\frac{z_6}{y_6}$ is an integer from the interval $[0,1)$, hence it equals $0$. 
Analogously all the other $q_i$s, $i=2,\ldots,5$, equal~$0$, consequently, $v_0=0$. A contradiction.

If $z_6$ is odd, then to check the saturation property we will seek for a $\ZZ_{\geqslant 0}$-combination of the following form: $v_0=v_6+n_1v_1+n_2v_2+\ldots+n_5v_5$, $n_i\in\ZZ_{\geqslant 0}$. To prove its existence, let us show that if we decompose $v_0$ 
and $v_6$ in the basis $v_1,\ldots,v_5$, then the corresponding coordinates differ by integer values and that the 
coordinates of~$v_6$ are strictly less than~$1$. Consequently, they will not exceed the corresponding 
coordinates of~$v_0$, since we know that the coordinates of~$v_0$ equal $q_i$ and belong to the interval $[0,1)$.

It is clear from the formulae that the cases $i=1,2,3,4$ and $i=5$ should be considered differently. Since cases $i=1,2,3,4$ are symmetrical,
consider only the cases $i=1$ and $i=5$. Since $\frac{z_6-1}{y_6}$ is integer, we have 
$$
q_1-\left(-\frac{y_1}{y_6}\right)=z_1-z_6\frac{y_1}{y_6}+\frac{y_1}{y_6}=z_1+\frac{(1-z_6)y_1}{y_6}
$$
is integer, analogously $q_5-\frac1{y_6}=\frac{z_6-1}{y_6}$ is integer, i.e., all the differences of the corresponding coordinates are integer.
We also know that
$y_1=\det(v_5,v_2,v_3,v_4,v_6)/m$ and $y_6=\det(v_1,v_2,v_3,v_4,v_5)/m$,  
which means that $|y_1|\in\{0,1,2,3\}$ and
$|y_6|=2$.
It follows from Lemmas~$\ref{new2}$, $\ref{l17}$, and~$\ref{l18}$ that the number~$-\frac{y_1}{y_6}$ is neither~$1$ 
nor~$\frac32$. In all the other cases the inequality $-\frac{y_i}{y_6}<1$ is held for all~$i$, $1\leqslant i\leqslant 4$. It is also clear that $\frac1{y_6}<1$.
Hence, after adding some $v_i$s, $1\leqslant i\leqslant 5$, we can obtain $v_0$ from $v_6$, and the ENSS under consideration is not an ENSS. 
Therefore~$M'(\lambda)$ is hereditarily normal.

\smallskip

\slu\label{line7} $\lambda=\pi_6=\left(\frac 12,\frac 12,\frac
12,\frac 12,\frac 12,\frac 12\right)$, $n=6$. After multiplying by~2, 
$$
M'(\lambda) =\{(\pm 1,\pm 1,\pm 1,\pm 1,\pm 1,\pm 1)\,|\,\text{even number of minuses}\}. 
$$

\begin{lemma}
The set $M'(\lambda)$ is almost unimodular of volume~$64$. The values of determinants equal $\pm 64$ and $\pm 128$, or, equivalently, $\pm m$ 
and $\pm 2m$.
\end{lemma}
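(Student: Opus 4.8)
The plan is to follow the scheme of Case~\ref{line6}. To prove that $M'(\lambda)$ is almost unimodular of volume $64$ I would (a) show that every $6\times 6$ determinant of vectors of $M'(\lambda)$ is divisible by $64$, and (b) exhibit a $6$-tuple whose determinant equals $\pm 64$ (so that $m=64$ really is the volume); to pin down the value set I would in addition prove that $|\det|\leqslant 128$ always, and exhibit a determinant equal to $\pm 128$. Together these give that the nonzero determinants are exactly $\pm 64=\pm m$ and $\pm 128=\pm 2m$, because the multiples of $64$ with absolute value at most $128$ are only $0,\pm 64,\pm 128$.

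For the divisibility, take any $v_1,\ldots,v_6\in M'(\lambda)$ and subtract the first row from the others; this does not change the determinant, and the rows $v_i-v_1$, $i\geqslant 2$, have even entries, so factoring out a $2$ from each gives $\det(v_1,\ldots,v_6)=2^5\det B$, where $B$ has first row $v_1$ and rows $(v_i-v_1)/2$ with entries in $\{0,\pm 1\}$. It then suffices to see that $\det B$ is even. Reduce $B$ modulo $2$: its first row is $(1,\ldots,1)$, while the $j$-th entry of row $i\geqslant 2$ is $\equiv 1$ exactly at the positions where $v_i$ and $v_1$ differ. Since $v_i$ and $v_1$ both have an even number of $(-1)$'s, they differ in an even number of coordinates, so every row of $B\bmod 2$ has even weight. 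Hence the all-ones vector lies in the kernel of $B\bmod 2$, so $\det B\equiv 0\pmod 2$ and $64\mid\det(v_1,\ldots,v_6)$.

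The value $64$ is attained, for instance, by the ``broom'' configuration
\[
\qmatrix{1&1&1&1&1&1\\ -1&-1&1&1&1&1\\ -1&1&-1&1&1&1\\ -1&1&1&-1&1&1\\ 1&1&1&-1&-1&1\\ 1&1&1&1&-1&-1},
\]
whose determinant equals $64$ (the corresponding sets of $(-1)$'s are $\{1,2\},\{1,3\},\{1,4\},\{4,5\},\{5,6\}$; the key point is to avoid a sub-collection of these sets that partitions $\{1,\ldots,6\}$, which would force a linear dependence). The value $128$ is attained by the ``star'' configuration $v_1=(1,\ldots,1)$ together with the five vectors $v_1-2(e_1+e_j)$, $j=2,\ldots,6$.

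The hard part is the upper bound $|\det|\leqslant 128$, since the length estimate gives only $|\det|\leqslant(\sqrt 6)^6=216$, which a priori allows $192=3m$. Here I would argue through the Gram matrix. A determinant is nonzero only if the six vectors are pairwise distinct and no two are antipodal; in that case every pairwise scalar product equals $\pm 2$ (the value $-6$ occurs only for antipodal pairs and $6$ only for equal ones). Thus the Gram matrix is $G=6I+2S$ with $S$ symmetric, of zero diagonal and $\pm 1$ off-diagonal entries, so $\operatorname{tr}S=0$ and $\operatorname{tr}S^2=30$. Writing $\mu_i=3+\lambda_i$ for the eigenvalues $\lambda_i$ of $S$, one gets $\sum_i\mu_i=18$, $\sum_i\mu_i^2=84$, and $\det G=2^6\prod_i\mu_i$. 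Maximizing $\prod_i\mu_i$ under these two constraints is a compact problem whose interior critical points take at most two distinct values, and its maximum is $256$, attained at $(\mu_i)=(8,2,2,2,2,2)$ (realized precisely by the star). Hence $\det G\leqslant 2^6\cdot 256=128^2$, so $|\det(v_1,\ldots,v_6)|=\sqrt{\det G}\leqslant 128$. I expect this eigenvalue optimization, rather than the divisibility, to be the main obstacle, since it is exactly what excludes $192$ and fixes the value set as $\{\pm 64,\pm 128\}$.
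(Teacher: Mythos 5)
Your proof is correct, and its first half coincides with the paper's: the divisibility-by-$64$ step is the same computation (the paper adds $v_1$ to the other rows, divides them by $2$, and then makes the first column even via a column operation; your mod-$2$ kernel formulation of ``each row has even weight'' is an equivalent packaging). Where you genuinely diverge is in ruling out the value $3m=192$. The paper does this with a short geometric estimate: split the six vectors into three pairs; the volume of the parallelepiped is at most the product of the areas of the three parallelograms (a Hadamard--Fischer-type inequality), and since every pairwise scalar product of distinct, non-antipodal vectors equals $\pm 2$, each area is $\sqrt{6\cdot 6-2^2}=2^{5/2}$, whence $|\det|\leqslant 2^{15/2}<192$. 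Your route via the Gram matrix $G=6I+2S$ and the constrained maximization of $\prod_i\mu_i$ under $\sum_i\mu_i=18$, $\sum_i\mu_i^2=84$ is heavier: to make it complete one must actually enumerate the two-value critical points ($k$ coordinates equal to $x$ and $6-k$ equal to $y$, $k=1,2,3$), which indeed yields the maximum $256$ at $(8,2,2,2,2,2)$ and nothing larger -- this is routine but you only assert it. In exchange, your argument buys the sharp bound $|\det|\leqslant 128$ and identifies the extremal (star) configuration, whereas the paper's bound $2^{15/2}\approx 181$ is just barely enough to exclude $192$. One point where your write-up is actually more complete than the paper's: you exhibit explicit $6$-tuples realizing both $\pm 64$ (the broom) and $\pm 128$ (the star), and both of your determinant evaluations check out; the paper's proof establishes only divisibility and the upper bound, although attainment of $64$ is what makes the set almost unimodular of volume $64$ (attainment of $128$ only surfaces later, in the Maple-assisted lemma).
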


\begin{proof}
Consider a subset $\{v_1,v_2,\ldots,v_6\}\subseteq M'(\lambda)$. Without loss of generality we have $v_1=(1,1,1,1,1,1)$. Add $v_1$ to each of the
other vectors and write down the obtained $6$ vectors as the rows of a matrix. The rows from the second till the sixth are even, hence the determinant
is divisible by $32$, and if we divide the rows from the second till the sixth by~$2$, the number of $1$s in each of the rows of the remaining matrix 
will be even. Now add to the first column of the new matrix the sum of all other columns. The new first column is even, hence the determinant of the 
original matrix is divisible by~$64$. 

Now bound it from above. Split the vectors in three pairs and generate a parallelogram with each pair,
then the volume of the parallelepiped does not exceed the product of areas of 
these three parallelograms. Each vector in $M'(\lambda)$ has length~$\sqrt 6$, the absolute value of the scalar product of two arbitrary vectors equals
~$2$, hence the area of each parallelogram equals $6^{2/2}\sqrt{1-(1/3)^2}=2^{5/2}$. Finally, the volume does not exceed
 $2^{15/2}<192$, consequently, its absolute value equals~$64$ or~$128$.
\end{proof}

Suppose that we have an~ENSS $\{v_0; v_1,v_2,v_3,v_4,v_5,v_6\}$ in~$M'(\lambda)$. 
Consider a $\Qg0$-combina\-tion corresponding to the vector $v_0$.
By Lemma~$\ref{nemin}$ we know that $|\det(v_1,v_2,v_3,v_4,v_5,v_6)|$ equals~$128$, consequently, by Lemma~$\ref{kramer}$ 
the coefficients of the $\Qg0$-combination equal~$0$ or~$\frac 12$.

For conveniency, we suppose till the end of this proof that $M'(\lambda)$ consists of points of the form $(\pm 1,\pm 1,\pm 1,\pm
1,\pm 1,\pm 1)$ having odd number of~$(-1)$s. 

\begin{lemma}
If $\det(v_1,v_2,\ldots,v_6)=128$,
then one of the sets $\{\pm v_1, \pm v_2, \pm v_3, \pm v_4, \pm v_5, \pm v_6\}$ (unordered) can be mapped via~$W$ to the set 
$$
\qmatrix{
w_1 \\
w_2 \\
w_3 \\
w_4 \\
w_5 \\
w_6
}
=
\qmatrix{
-1 & 1  & 1  & 1  & 1  &  1 \\
1  & -1 & 1  & 1  & 1  &  1 \\
1  & 1  & -1 & 1  & 1  &  1 \\
1  & 1  & 1  & -1 & 1  &  1 \\
1  & 1  & 1  & 1  & -1 & 1 \\
1  & 1  & 1  & 1  & 1  & -1 }.
$$
\end{lemma}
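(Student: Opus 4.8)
The plan is to convert the determinant hypothesis into a statement about the Gram matrix of the six vectors, extract from it a rigid combinatorial structure of their ``minus-sets'', and finally normalize that structure by the Weyl group. Throughout I use that every vector of $M'(\lambda)$ has squared length $6$ and that, by the lemma just proved, every nonzero determinant of a $6$-tuple equals $\pm m$ or $\pm 2m$ with $m=64$. First I would record the possible scalar products: for $v_i,v_j\in M'(\lambda)$ one has $v_i\cdot v_j=6-2|S_i\triangle S_j|$, where $S_k\subseteq\{1,\dots,6\}$ is the set of positions in which $v_k$ equals $-1$; since $|S_i|,|S_j|$ are odd, $|S_i\triangle S_j|$ is even. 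As $\det(v_1,\dots,v_6)\neq 0$, no two of the vectors are equal or antipodal, so $|S_i\triangle S_j|\in\{2,4\}$ and hence $v_i\cdot v_j\in\{+2,-2\}$ for all $i\neq j$.

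The core step is a spectral analysis of the Gram matrix $G=(v_i\cdot v_j)$. Writing $G=4I+2A$, the matrix $A$ is symmetric with unit diagonal and off-diagonal entries $\pm 1$, so $\operatorname{tr}A=6$ and $\operatorname{tr}A^2=\sum_{i,j}A_{ij}^2=36$. If $\mu_1,\dots,\mu_6$ are the eigenvalues of $A$, then $\sum\mu_i=6$, $\sum\mu_i^2=36$, and $\det G=\prod(4+2\mu_i)=64\prod(2+\mu_i)$. The hypothesis $|\det(v_1,\dots,v_6)|=2m=128$ gives $\det G=16384$, that is $\prod(2+\mu_i)=256$. Setting $x_i=2+\mu_i>0$ (positivity since $G$ is positive definite), the constraints read $\sum x_i=18$ and $\sum x_i^2=84$, and I must show that the maximum of $\prod x_i$ under these two constraints equals $256$ and is attained only at the target spectrum.

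This optimisation is the step I expect to be the main obstacle. At a critical point the $x_i$ take at most two distinct values, because each is a root of the single quadratic $2\nu t^2+\lambda t-P=0$ arising from the Lagrange conditions, with $P=\prod x_i$. Writing $k$ of the $x_i$ equal to $a$ and the remaining $6-k$ equal to $b$ and running through $k=1,\dots,5$, I would check that the value $256$ occurs only for $x=(8,2,2,2,2,2)$, i.e. $\mu=(6,0,0,0,0,0)$ (the other critical values being strictly smaller, e.g. $64$ for $k=3$). An eigenvalue $6$ together with $\sum\mu_i=6$ and $\sum\mu_i^2=36$ forces all remaining eigenvalues to vanish, so $A$ has rank one; being a symmetric $\pm1$ matrix with unit diagonal, it must be $A=zz^{\mathsf T}$ for some sign vector $z\in\{\pm 1\}^6$. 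The delicate part here is to exclude every two-value spectrum other than the target and then to pin down the rank-one matrix.

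It remains to exploit $A=zz^{\mathsf T}$. Replacing each $v_i$ by $u_i:=z_i v_i\in M'(\lambda)$ (an admissible choice of signs) turns all pairwise scalar products into $+2$, so the minus-sets $S_i$ of the $u_i$ satisfy $|S_i\triangle S_j|=2$ for all $i\neq j$. Fixing $S_1$ and putting $T_j:=S_j\triangle S_1$ for $j\geqslant 2$ yields five distinct two-element sets that pairwise meet in exactly one point; since an intersecting family of $2$-subsets is either a star or lies inside a triangle, and a triangle has only three edges, these five sets share a common element $p$, say $T_j=\{p,q_j\}$ with $\{q_2,\dots,q_6\}=\{1,\dots,6\}\setminus\{p\}$. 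The set $C:=S_1\triangle\{p\}$ has even cardinality, so the associated sign change lies in $W$; applying it sends the minus-sets to $\{p\},\{q_2\},\dots,\{q_6\}$, namely to the six singletons. A coordinate permutation from $W$ then carries them to $\{1\},\dots,\{6\}$, which is precisely the configuration of rows of the displayed matrix. Thus $\{\pm v_1,\dots,\pm v_6\}$ is mapped onto $\{w_1,\dots,w_6\}$ by an element of $W$, as required.
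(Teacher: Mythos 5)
Your proof is correct, but it takes a genuinely different route from the paper's. The paper argues by antipodal symmetry and pigeonhole that, after a sign change in $W$, the $6$-tuple may be assumed to contain $w_1,w_2,w_3$, and then disposes of all remaining configurations by a brute-force check in Maple; it is short but computer-assisted. You instead pass to the Gram matrix $G=(v_i\cdot v_j)=4I+2A$, where $A$ is a symmetric $\pm1$ matrix with unit diagonal (using that distinct, non-antipodal vectors of $M'(\lambda)$ have scalar product $\pm2$), translate $|\det(v_1,\dots,v_6)|=2m$ into the spectral conditions $\sum x_i=18$, $\sum x_i^2=84$, $\prod x_i=256$ for $x_i=2+\mu_i>0$, force the spectrum by a two-value Lagrange analysis, conclude $A=zz^{\mathsf T}$ is rank one, and finish with the star/triangle argument on minus-sets to produce the required element of $W$. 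I verified the finite enumeration you defer: with $k$ entries equal to $a$ and $6-k$ equal to $b$, the critical values in the positive region are $256$ for $k=1$ (i.e.\ $x=(8,2,2,2,2,2)$), $(19+6\sqrt{10})(3-\sqrt{10}/2)^4\approx 154$ for $k=2,4$, and $64$ for $k=3$; since the product vanishes on the boundary of the compact constraint set, any point with value $256$ is a global maximizer, hence critical, hence the target spectrum, so your argument closes. What your approach buys is a fully human-checkable, conceptual proof that explains \emph{why} determinant $2m$ rigidifies the configuration (the Gram matrix must be $4I+2zz^{\mathsf T}$), eliminating the paper's reliance on software; the cost is the optimisation machinery. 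One cosmetic remark: after the sign change by $C=S_1\triangle\{p\}$ the six minus-sets are already all six singletons, so the final coordinate permutation is superfluous.
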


\begin{proof}
The set $M'(\lambda)$ contains a vector $-v$ for each vector $v$. Hence, to compute the determinants, we may consider only $16$ vectors
instead of $32$, namely those which have one~$(-1)$ and those which have three~$(-1)$s but not on the first position. By the pigeonhole 
principle, there exists a sign change $\sigma \in W$ such that the 6-tuple $\{\sigma v_1,\sigma v_2,\ldots,\sigma v_6\}$ contains at least 3 vectors 
with one or five~$(-1)$s (otherwise $6\cdot 12 < 2\cdot 32$). After this sign change, we may assume that the given $6\times 6$ minor contains vectors~$w_1$, $w_2$, and~$w_3$. The direct check in Maple~7, \cite{Maple}, shows that only the following $6\times 6$ minor satisfies the condition: $(w_1,w_2,w_3,w_4,w_5,w_6)$. 
\end{proof}

For every pair of the form $(\pm w_i, \pm w_j)$ the group~$W$ contains an element interchanging these two vectors, hence Lemma~\ref{resultm2m} 
can be applied.

\medskip
For $n\geqslant 7$ construct an NSS. Multiply all the coordinates by~$2$. After this all the coordinates of the initial vectors become~$\pm 1$.

\prr\label{ex20} Consider vectors
$$
\begin{pmatrix}v_1\\v_2\\v_3\\v_4\\$\,$\\v_5\\v_6\\v_7\end{pmatrix}=
\qmatrix{
1&1&1&1&1&1&1\\1&1&1&-1&-1&-1&-1\\
1&-1&-1&1&1&-1&-1\\1&-1&-1&-1&-1&1&1\\\\1&1&-1&-1&1&1&1\\
1&1&1&1&-1&-1&1\\1&-1&1&1&1&1&-1}.
$$
Then $v_0=(2,0,0,0,0,0,0)=\frac12(v_1+v_2+v_3+v_4)=v_5+v_6+v_7-v_1$.
Let us consider the first coordinate. If $v_0$ is a
$\ZZ_{\geqslant 0}$-combination of some $v_i$s, then it is the sum of exactly two $v_i$s. But no pairwise sum equals $v_0$, hence, it is indeed an NSS.

Counterexample~\ref{ex20} can be easily modified for the greater values of~$n$. Indeed, append $n-7$ coordinates equalling~$1$ to each vector.
It is easy to see that these vectors also belong to $M(\lambda)$ for $\lambda=\pi_{n}$ for all $n\geqslant 7$. Since~$1$ 
is at the same time the first coordinate of all~$v_i$s, every linear combination of $v_i$s will have the same value on each appended coordinate as 
on the first coordinate.

\medskip

Theorem~$\ref{mth}$ is proved.

\end{document}